\theoremstyle{definition}
\newtheorem{dfn}{Definition}[section]
\theoremstyle{plain}
\newtheorem{thm}[dfn]{Theorem}
\newtheorem{prop}[dfn]{Proposition}
\newtheorem{lem}[dfn]{Lemma}
\newtheorem{rmk}[dfn]{Remark}
\newcommand{\K}{\mathbb{K}}
\newcommand{\Z}{\mathbb{Z}}
\newcommand{\der}{\mathrm{Der}}
\newcommand{\aut}{\mathrm{Aut}}
\newcommand{\tder}{\mathrm{tder}}
\newcommand{\tDer}{\mathrm{tDer}}
\renewcommand{\hom}{\mathrm{Hom}}
\newcommand{\taut}{\mathrm{TAut}}
\renewcommand{\div}{\mathrm{div}}
\newcommand{\gdiv}{\mathrm{gDiv}}
\newcommand{\gn}{{(g,n+1)}}
\newcommand{\pa}{\partial}
\newcommand{\gr}{\mathrm{gr}}
\newcommand{\ad}{\mathrm{ad}}
\newcommand{\rot}{\mathrm{rot}}
\newcommand{\grwt}{\mathrm{gr}^{\mathrm{wt}}}
\newcommand{\IntS}{\mathrm{Int}\,S}
\newcommand{\inv}{^{-1}}
\newcommand{\R}{\mathbb{R}}
\newcommand{\Rp}{{\mathbb{R}_{>0}}}
\newcommand{\Rnn}{{\mathbb{R}_{\geq 0}}}
\newcommand{\smooth}{C^\infty}
\newcommand{\Sph}{\mathbb{S}}
\newcommand{\Ss}{\mathcal{S}}
\newcommand{\wt}{\mathrm{wt}}
\title{Some algebraic aspects of the Turaev cobracket
\thanks
{AMS subject classifications: Primary 57N05; Secondary 
17B62, 17B65, 20F38, 55P50, 57R19.
\quad 
Keywords: Turaev cobracket, Goldman bracket, Kashiwara-Vergne problem, mapping class groups.
}
}
\author{Nariya Kawazumi\thanks{Department of Mathematical Sciences, University of Tokyo, 3-8-1 Komaba, Meguro-ku, Tokyo 153-8914, Japan \texttt{e-mail:kawazumi@ms.u-tokyo.ac.jp}}}
\date{}							
\begin{document}

\maketitle

\begin{abstract}
The Turaev cobracket, a loop operation introduced by V.\ Turaev \cite{Tu91}, 
which measures the self-intersection of a loop on a surface, 
is a modification of a path operation introduced earlier 
by Turaev himself \cite{Tu78}, as well as 
a counterpart of the Goldman bracket \cite{Go86}.
In this survey based on the author's joint works with A.\ Alekseev, 
Y.\ Kuno and F.\ Naef, 
we review some algebraic aspects of the cobracket and its framed variants
including their formal description, 
an application to the mapping class group of the surface
and a relation to the (higher genus) Kashiwara--Vergne problem. 
In addition, we review a homological description of the cobracket 
after R.\ Hain \cite{Hain18}.
\end{abstract}

\tableofcontents

\section{Introduction}
\label{sec:intro}

Let $S$ be a connected oriented surface. 
By a {\it surface} we mean a $2$-dimensional $\smooth$ manifold
possibly with boundary.
The set $[S^1, S]$ of free homotopy classes of loops on $S$, 
which we denote by $\hat\pi = \hat\pi(S)$, 
is naturally identified with the set of conjugacy 
classes $\pi_1(S)/\text{conj}$ of the fundamental group $\pi_1(S)$. Goldman \cite{Go86} introduced 
a binary loop operation on $\hat\pi$, which we call
the Goldman bracket, on the free $\Z$-module $\Z\hat\pi$ over 
the set $\hat\pi$ as a topological extract
of the Poisson structure on the moduli space of 
flat bundles over the surface $S$. 
Earlier than the Goldman bracket, Turaev \cite{Tu78} introduced
a loop operation on the fundamental group $\pi_1(S)$.
The Turaev cobracket \cite{Tu91} is its free loop version 
as well as a counterpart of the Goldman bracket. 
The cobracket together with the Goldman bracket 
makes the quotient of $\Z\hat\pi$ by the linear span 
of the trivial loop a Lie bialgebra \cite{Tu91}.
We consider framed versions of the Turaev cobracket
which make $\Z\hat\pi$ itself a Lie bialgebra.
\par
Yusuke Kuno and the author have studied these loop operations 
and their applications to the mapping class group of a surface
\cite{KK14, KKpre, KK15, KK16}. There, the formal description of 
the Goldman bracket through a special expansion 
plays a fundamental role. 
The formal description of the framed Turaev cobracket was desirable
but in vain. In the genus $0$ case it was carried out 
by Massuyeau \cite{Mas15}. 
To obtain the complete realization of the formal description, 
it was essential for us to collaborate with Anton Alekseev and Florian Naef.
Here the key concept is the Kashiwara--Vergne problem
\cite{KV78, AT12}. 
Alekseev--Kawazumi--Kuno--Naef proved that
the set of special expansions which induce the formal description 
of the framed Turaev cobracket has a one-to-one correspondence 
with the set of solutions to the Kashiwara--Vergne problem.
More precisely, the Kashiwara--Vergne problem involves genus $0$ surfaces.
We first proved the correspondence for the genus $0$ case 
\cite{genus0}. This leads us to formulate a positive genus version of 
the Kashiwara--Vergne problem \cite{announce}. In \cite{highergenus}
we proved the correspondence for all positive genus surface.
Moreover we proved the existence of solutions to the positive genus version
except some genus $1$ cases, where there is no solution.
\par
The present paper sketches this story, and 
is a continuation of the author's joint survey with Kuno 
about the Goldman--Turaev Lie bialgebra and its applications 
to the mapping class group \cite{KK16}.
Independently, Hain is writing a survey on this topic \cite{Hain19}.
His approach is ``motivic", i.e. based on his mixed Hodge theory, 
while our approach comes from low-dimensional topology.
\par
In \S\ref{sec:def} we recall the definition of the Turaev cobracket 
and its framed variants. The Turaev cobracket is defined 
for a generic immersed loop on the surface. 
It is invariant under regular-homotopy, but not under homotopy.
In fact, the birth-death move $(\omega1)$ of a monogon changes 
the value of the Turaev cobracket. This is the reason why we have to take the 
quotient by the linear span of the trivial loop. To avoid taking 
the quotient, we consider a framed version of the cobracket.
We need the coefficient
of the trivial loop in the framed Turaev cobracket
to get the Enomoto--Satoh trace \cite{ES} and 
the Alekseev--Torossian divergence cocycle \cite{AT12}. \par
A straightforward computation can show 
the regular-homotopy invariance of the Turaev cobracket.
But this is proved also by a homological description of the cobracket.
Following Hain \cite{Hain18} we will give it in \S\ref{sec:homol}.
Hain used his own description to prove that the framed cobracket 
is a morphism of mixed Hodge structures. 
Combining this result and our theorems (Theorems \ref{thm:genus0} and \ref{thm:genus+}), he
deduces the existence of solutions to the Kashiwara--Vergne 
problem under the same condition as ours
\cite{Hain18}.
\par
\S\ref{sec:FG} has some overlap with our previous survey paper 
\cite{KK16}. 
We formulate our formality problems in \S\ref{subsec:exp}. 
\S\ref{subsec:gold} is devoted to a review on the formality of 
the Goldman bracket. An application to the mapping class group 
is mentioned in \S\ref{subsec:log}. 
A geometric version $\tau$ of the Johnson homomorphisms 
is introduced by the logarithm of mapping classes. 
We will give an alternative 
proof of the vanishing of the composite of the framed Turaev cobracket 
and the homomorphism $\tau$ on the Johnson group $\mathcal{K}_{g,1}$.
\par
We survey the results of 
\cite{announce, genus0, highergenus} in \S\ref{sec:form}.
In \S\ref{subsec:zero} we sketch the proof of the correspondence 
between the formality of the Turaev cobracket and the Kashiwara--Vergne 
problem in genus $0$. The formulation of our theorems in positive genus 
is stated in \S\ref{subsec:HKV}.
\par
In this paper we confine ourselves to a compact connected oriented surface. 
Let $S$ be a compact connected oriented surface with non-empty boundary.
It is classically known that $S$ is classified as an oriented $\smooth$ manifold 
by its genus and the number of its boundary components. 
So we denote by $\Sigma_{g, n+1}$ a compact connected oriented surface
of genus $g$ with $n+1$ boundary components. Such an $S$ is 
diffeomorphic to $\Sigma_{g, n+1}$ for some $g, n \geq 0$. 
The fundamental group $\pi_1(\Sigma_{g,n+1})$ is a free group of rank
$2g+n$. 
\par
We conclude this section by introducing 
a symbol associated with an associative algebra.
Let $A$ be an associative (topological) algebra over a commutative ring $R$
with unit. 
Then we denote by $|A|$ the quotient of $A$ by the commutator $[A, A]$, 
(the closure of) the $R$-submodule generated by the set 
$\{ab-ba; \,\, a, b \in A\}$, and by $|\cdot|: A \to |A|$, $a \mapsto |a|$, 
the quotient map. 
If we regard $A$ as a Lie algebra by the commutator $[a,b] := ab-ba$, 
$a, b \in A$, then $|A|$ equals the abelianization of the Lie algebra $A$. 
A priori, the quotient $|A|$ is just an $R$-module.
For example, if $A = \widehat{T}(V) = \prod^\infty_{m=0}V^{\otimes m}$, 
the completed tensor algebra generated by a vector space $V$ over 
a field of characteristic $0$, then the quotient $|A|$ equals the cyclic coinvariants 
in the completed tensor algebra: $|A| = \prod^\infty_{m=0}
(V^{\otimes m})_{\Z/m}$, where the cyclic group $\Z/m$ acts on the 
space $V^{\otimes m}$ by cyclic permutation of the slots. 
Moreover, if $G$ is a group, we have 
$|RG| = R (G/\text{conj})$, the free $R$-module generated by the set of 
conjugacy classes $G/\text{conj}$ in $G$.
For example, we have $|R\pi_1(S)| = R \hat\pi(S)$. 
\par
\medskip
First of all, the author thanks Athanase Papadopoulos for giving him a chance 
to take part in a tribute to Vladimir Turaev, 
whose works have a great influence
in the mathematics of the twentieth and twenty-first centuries.
Next he thanks Florian Naef for correction and simplification of arguments
in \S\ref{sec:homol}. In particular, the proof of the homotopy invariance of 
the framed Turaev cobracket in \S\ref{subsec:inv} is due to him.
Finally he thanks Yusuke Kuno, who kindly read the whole manuscript carefully, 
and let the author know some typos and helpful suggestions for improvement. 
\par
The present research is supported in part by the grants JSPS KAKENHI 26287006, 18K03283 and 18KK0071.\par

\section{The Turaev cobracket and its framed variants}
\label{sec:def}

\subsection{The original definition of the Turaev cobracket}
\label{subsec:orig}

Let $S$ be a connected oriented surface, i.e., a connected oriented 
$2$-dimensional $\smooth$ manifold possibly with boundary. 
We denote by $\hat\pi$ the free homotopy set of free loops on $S$,
$\hat\pi = \hat\pi(S) := [S^1, S]$.
Since $S$ is connected, it equals the set of conjugacy classes 
$\hat\pi = \pi_1(S)/\text{conj}$ in the fundamental group $\pi_1(S)$.
Moreover, if we denote by $\IntS$ the interior of $S$, then 
we have $\hat\pi(S) = \hat\pi(\IntS)$. 
For any commutative ring $R$ with unit, we have 
$|R\pi_1(S)| = R\hat\pi(S)$, where the latter means 
the free $R$-module generated by the free homotopy set $\hat\pi$.
Throughout this chapter we write simply $\pi = \pi_1(S)$. 
\par

In general, any continuous map from a $1$-dimensional manifold $T$ 
to the surface $S$ is homotopic to a generic immersion, a $\smooth$ 
immersion at worst with transverse double points. 
Two generic immersions are homotopic if and only if they are deformed 
to each other 
by iteration of the following 3 kinds of moves $(\omega 1)$, $(\omega 2)$, $(\omega 3)$ and isotopy: The move $(\omega1)$ is a birth-death of a monogon coming from a cusp,
$(\omega2)$ a birth-death of a bigon coming from a tangential double point,
and $(\omega3)$ a jumping over a double point 
coming from a transverse triple point. 
For details, see \cite{Go86} 5.6 Lemma.
\par
The moves $(\omega2)$ and $(\omega3)$ are regular-homotopies, but
$(\omega 1)$ is not so. One can slide a monogon along the given generic immersed loop 
by using the moves $(\omega2)$, $(\omega3)$ and isotopy. 
Hence two generic immersed loops are regular-homotopic if and only if 
they are deformed to each other 
by iteration of the moves $(\omega 2)$, $(\omega 3)$ and isotopy. \par
Now we recall the definition of the Turaev cobracket for a generic immersed loop
$\alpha: S^1=\R/2\pi\Z \to S$. 
We denote by $D_\alpha$ the set of pairs of parameters mapped to double points:
$D_\alpha := \{(t_1,t_2) \in S^1\times S^1; \,\, \alpha(t_1) = \alpha(t_2),\,
t_1\neq t_2\}$. Then the Turaev cobracket $\delta\alpha \in \Z\hat\pi\otimes\Z\hat\pi = |\Z\pi|\otimes |\Z\pi|$ is defined by 
$$
\delta \alpha := \sum_{(t_1, t_2) \in D_\alpha}
\varepsilon(\overset\cdot\alpha(t_1), \overset\cdot\alpha(t_2))
(\alpha\vert_{[t_1, t_2]})\otimes (\alpha\vert_{[t_2, t_1]})
\in \Z\hat\pi\otimes\Z\hat\pi.
$$
Here $\varepsilon(\overset\cdot\alpha(t_1), \overset\cdot\alpha(t_2))
\in \{\pm1\}$ is the local intersection number of the velocity vectors 
with respect to the orientation of the surface $S$, and $\alpha\vert_{[t_1, t_2]}
\in \hat\pi$ is the free homotopy class of the segment of $\alpha$ 
restricted to the interval on $S^1$ running from $t_1$ to $t_2$ in the positive 
direction. One can prove that $\delta\alpha$ is invariant under 
the moves $(\omega2)$, $(\omega3)$ and isotopy. 
This follows from straightforward arguments, and 
will be proved later in \S3.2 by using twisted homology. 
Hence the operation $\delta$ is regular-homotopy 
invariant. 
The following is a typical example of the Turaev cobracket.
\begin{lem}\label{lem:iterate}
For any simple closed curve $C$ in the interior $\IntS$ and $m \in \Z$, 
we have
$$
\delta(C^m) = 0 \in |\Z\pi|\otimes |\Z\pi|.
$$
\end{lem}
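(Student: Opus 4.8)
The plan is to compute $\delta(C^m)$ directly from a carefully chosen regular-homotopy representative supported in a neighbourhood of $C$. Since $S$ is oriented, the simple closed curve $C \subset \IntS$ is two-sided and hence has an embedded annular neighbourhood $A \cong S^1 \times (-1,1)$ whose core is (a curve isotopic to) $C$. The cases $m = 0, \pm 1$ are immediate: $C^0$ and $C^{\pm1}$ have the embedded representatives (a small null-homotopic circle, resp.\ $C$ itself) with no double points, so $\delta(C^m)=0$; and for $m \leq -2$ the curve $C^m$ is a positive power of the simple closed curve $C^{-1}$, so from now on we may assume $m \geq 2$.

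For such $m$ I would represent $C^m$ by the \emph{spiral} loop $\alpha$ in $A$ that winds $m$ times around the $S^1$-factor (in the direction of $C$) with monotonically increasing second coordinate, and then closes up by a short, essentially ``radial'' return arc meeting the spiral transversally. Then $\alpha$ is a generic immersion with exactly $m-1$ double points $p_1,\dots,p_{m-1}$, where $p_j$ is the point at which the return arc crosses the $j$-th round of the spiral. At $p_j$ the loop $\alpha$ is split into two arcs which, closed up at $p_j$, are loops in $A$ winding $j$ and $m-j$ times around, hence are freely homotopic in $S$ to $C^{j}$ and $C^{m-j}$ respectively. Moreover the local picture at every $p_j$ is the same — the spiral strand crossing the inward-pointing return strand — so the local intersection sign is independent of $j$; call it $\varepsilon$.

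Feeding this into the definition, the two ordered pairs of $D_\alpha$ lying over $p_j$ (the one with the first parameter on the spiral, and its reverse) contribute $\varepsilon\, C^{m-j}\otimes C^{j}$ and $-\varepsilon\, C^{j}\otimes C^{m-j}$ respectively, so that
$$
\delta(C^m) \;=\; \varepsilon\sum_{j=1}^{m-1}\bigl(C^{m-j}\otimes C^{j}\;-\;C^{j}\otimes C^{m-j}\bigr)\;\in\;|\Z\pi|\otimes|\Z\pi|.
$$
Reindexing the first sum by $j \mapsto m-j$ turns it into the second sum, and the whole expression collapses to $0$. I expect the only delicate point to be the bookkeeping in the middle step: correctly matching each double point with the two free-homotopy classes of its arcs, and checking that all the local signs agree; once that is in place, the vanishing is a one-line reindexing. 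Equivalently, one may observe that choosing one ordered pair at each $p_j$ accounts for exactly half of $D_\alpha$ and yields $\varepsilon\sum_{j} C^{m-j}\otimes C^{j}$, which is symmetric under the flip $j \leftrightarrow m-j$; since $\delta\alpha$ is by construction the antisymmetrization of this sum, it vanishes.
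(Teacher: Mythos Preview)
Your argument is correct and follows essentially the same approach as the paper: reduce to $m>0$, represent $C^m$ by a spiral with $m-1$ self-intersections, and observe that the resulting sum $\sum_{k=1}^{m-1}\bigl(C^k\otimes C^{m-k}-C^{m-k}\otimes C^k\bigr)$ vanishes by the reindexing $k\mapsto m-k$. You have simply supplied more detail (the annular neighbourhood, the constancy of the local sign) than the paper's terse version.
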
 
\begin{proof} Since $C\inv$ is also a simple closed curve 
and $C^0$ is trivial, 
it suffices to show the lemma for $m > 0$. 
As a representative for $C^m$, we can take a spiral with 
$m-1$ self-intersection points. Hence $\delta(C^m)$ 
equals
$
\sum^{m-1}_{k=1} C^k\otimes C^{m-k} - C^{m-k}\otimes C^{k}
= 0
$, 
which proves the lemma.
\end{proof}

\par
It is easy to see that $\delta$ is not invariant 
under the move $(\omega 1)$. 
For example, if we insert a monogon in the positive direction 
to $\alpha$, 
then we have to add an extra term $\alpha\otimes \mathbf{1} - \mathbf{1} \otimes \alpha$ 
to the original $\delta\alpha$.  
Here $\mathbf{1} = |1| \in \hat\pi$ is 
the free homotopy class of the constant loop $1$. 
To avoid this ambiguity, Turaev's original definition \cite{Tu91} takes 
the quotient $\Z\hat\pi/\Z\mathbf{1}$. 
Then $\delta\alpha\bmod{\Z\mathbf{1}}$ is homotopy invariant, that is, 
the cobracket 
$$
\delta: \Z\hat\pi/\Z\mathbf{1} \to (\Z\hat\pi/\Z\mathbf{1})^{\otimes 2}
$$
is a well-defined operation. Moreover Turaev \cite{Tu91}
observed that the cobracket together with 
the Goldman bracket $[-,-]$ makes the quotient $\Z\hat\pi/\Z\mathbf{1}$ a Lie bialgebra 
in the sense of Drinfel'd \cite{Drinfeld}. 
Since the constant loop $\mathbf{1}$ is in the center of the Goldman bracket,
the Goldman bracket descends to the quotient $\Z\hat\pi/\Z\mathbf{1}$.  
In particular, the cobracket 
satisfies the compatibility condition with the bracket, i.e., 
\begin{equation}\label{eq:compatibility}
\delta [\alpha, \beta] = \ad(\alpha)(\delta\beta) - 
\ad(\beta)(\delta\alpha) \in (\Z\hat\pi/\Z\mathbf{1})^{\otimes 2}
\end{equation}
for any $\alpha$ and $\beta \in \hat\pi$. 
Here $\ad(\alpha) \in \mathrm{End}((\Z\hat\pi/\Z\mathbf{1})^{\otimes 2})$ 
is the adjoint action defined by $\ad(\alpha)(\beta_1\otimes \beta_2) 
:= [\alpha, \beta_1]\otimes\beta_2 + \beta_1\otimes [\alpha, \beta_2]$
for $\beta_1, \beta_2 \in\hat\pi$. 
This condition can be regarded as the cocycle condition of $\delta$. 
Thus the Lie bialgebra $(\Z\hat\pi/\Z\mathbf{1}, [-,-], \delta)$ is called 
the {\it Goldman--Turaev Lie bialgebra} of the surface $S$. 
It is naturally isomorphic to that of the interior $\IntS$. 
\par
Later Chas \cite{Cha04} proved the Lie bialgebra is involutive.
This survey covers only some algebraic aspects of the Turaev cobracket.
But we should remark that 
a geometric approach to the cobracket and the Goldman bracket 
by Chas and her coworkers
yields fruitful results on surface topology
\cite{Cha04, Cha10,ChaGa,ChaKr10,ChaKr16, ChaLa}. 
\par

\subsection{Framed Turaev cobrackets}
\label{subsec:frame}

As will be reviewed in \S\ref{subsec:log}, 
we can take the logarithms of some kinds of mapping classes of the surface
$S$ in a completion of $|\K\pi|$. The subset consisting of these 
logarithms essentially equals the image of the Johnson homomorphism.
It is an important problem to describe the Johnson image.
The Turaev cobracket vanishes on these logarithms, 
so that we can derive a constraint on  
the Johnson image from the cobracket. It includes the Morita trace \cite{MoAQ}, 
but not the Enomoto--Satoh trace. In order to recover the Enomoto--Satoh
trace \cite{ES} we need to define a variant  
of the Turaev cobracket with values in $|\K\pi|$ itself \cite{Ka15}.
To do this, we consider a regular-homotopy variant of the Turaev cobracket.
In \cite[\S18]{Tu91} Turaev already studied such a variant. 
But we adopt a slightly different formulation using a framing of the surface $S$. 
\par
From now on, we assume $S$ is compact with non-empty boundary.
In particular, the tangent bundle $TS$ is trivial. We mean by a {\it framing} 
the homotopy class of an orientation-preserving isomorphism 
$TS \overset\cong\to
S\times \R^2$ of vector bundles over $S$. 
This induces a homeomorphism from the complement of the zero section 
$0(S)$ in the total space $TS$ onto the product $S\times (\R^2\setminus\{0\})$.
The set $F(S)$ of all framings  is an affine set modeled on the first cohomology group 
$H^1(S; \Z)$. For any framing $f \in F(S)$ one define the {\it rotation number} 
$\rot_f\alpha \in \Z$ 
of an immersed loop $\alpha: S^1 \to S$ with respect to $f$. 
This is defined by
the mapping degree
$$
\rot_f\alpha := \deg\left(S^1 
\overset{\overset\cdot\alpha}\to TS \setminus 0(S)
\overset{f}\cong S\times (\R^2\setminus\{0\}) \overset{\text{proj}}\to 
\R^2\setminus\{0\}\right) \in \Z.
$$
The move $(\omega1)$ means an insertion of a monogon. 
If we insert a monogon in the positive direction to $\alpha$, 
then $\rot_f\alpha$ increases by $+1$, while 
we have to add an extra term $\alpha\otimes \mathbf{1} - \mathbf{1} \otimes \alpha$ 
to the original $\delta\alpha$.  Hence 
$\delta\alpha + (\rot_f\alpha)(\mathbf{1}\otimes\alpha - \alpha\otimes \mathbf{1})$
is invariant under $(\omega 1)$, so is homotopy invariant, 
by which we define the {\it framed Turaev cobracket} $\delta^f$ 
with respect to a framing $f$
$$
\delta^f: |\Z\pi| \to |\Z\pi|^{\otimes 2}
$$
 \cite{genus0}\cite{highergenus}.
In other words, $\delta^f\alpha$ is defined to be the $\delta$ of 
a representative of $\alpha$ whose rotation number $\rot_f$ 
vanishes. Similarly $(|\Z\pi|, [-,-], \delta^f)$ is an involutive Lie
bialgebra. 
In particular, we have 
\begin{lem}\label{lem:cocycle}
$\delta^f$ is a cocycle with respect to the Goldman bracket, i.e., 
$$
\delta^f([\alpha, \beta]) = \ad(\alpha)(\delta^f\beta) 
- \ad(\beta)(\delta^f\alpha)
$$
for any $\alpha, \beta \in \hat\pi$.
\end{lem}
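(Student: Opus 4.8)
The plan is to derive the identity from Turaev's original compatibility \eqref{eq:compatibility} by controlling the trivial-loop correction that distinguishes $\delta^f$ from $\delta$. Writing $\Delta(\alpha,\beta) := \delta^f([\alpha,\beta]) - \ad(\alpha)(\delta^f\beta) + \ad(\beta)(\delta^f\alpha)$ for the defect, I would first reduce modulo $\Z\mathbf{1}\otimes|\Z\pi| + |\Z\pi|\otimes\Z\mathbf{1}$: there $\delta^f\gamma$ agrees with $\delta\gamma$ and $\ad(\gamma)$ descends, so \eqref{eq:compatibility} shows that $\Delta(\alpha,\beta)$ lies in $\Z\mathbf{1}\otimes|\Z\pi| + |\Z\pi|\otimes\Z\mathbf{1}$. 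Next, note that $\delta^f$ is anti-cocommutative — this is manifest, since $\delta$ is (the set $D_\alpha$ being symmetric) and $\tau(\mathbf{1}\otimes\gamma - \gamma\otimes\mathbf{1}) = -(\mathbf{1}\otimes\gamma - \gamma\otimes\mathbf{1})$ for the flip $\tau$ — while $\tau\,\ad(\gamma) = \ad(\gamma)\,\tau$; hence $\Delta(\alpha,\beta)$ is anti-invariant under $\tau$, and a $\tau$-anti-invariant element of $\Z\mathbf{1}\otimes|\Z\pi| + |\Z\pi|\otimes\Z\mathbf{1}$ is necessarily of the form $\mathbf{1}\otimes w - w\otimes\mathbf{1}$ with $\epsilon_1(w) = 0$, where $\epsilon_1 : |\Z\pi|\to\Z$ is the coefficient of the trivial loop. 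Since $(\mathrm{id}\otimes\epsilon_1)(\mathbf{1}\otimes w - w\otimes\mathbf{1}) = -w$, it suffices to prove $(\mathrm{id}\otimes\epsilon_1)(\Delta(\alpha,\beta)) = 0$.

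To make the defect explicit I would pass to the geometric definitions: choose generic immersed representatives $a$ of $\alpha$ and $b$ of $\beta$ in general position, so that $[\alpha,\beta] = \sum_{p\in a\cap b}\varepsilon(p)\,|a_pb_p|$ ($\varepsilon(p)$ the local intersection sign at $p$), where $a_pb_p$ runs once around $a$ based at $p$ and then once around $b$ based at $p$. The crucial geometric input is that the rotation number is additive under this concatenation, $\rot_f(a_pb_p) = \rot_f a + \rot_f b$: the two corners of $a_pb_p$ at $p$ turn through the angles $\dot a(p)\to\dot b(p)$ and $\dot b(p)\to\dot a(p)$ measured in $(-\pi,\pi)$, which are opposite. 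Substituting $\delta^f\gamma = \delta(c) + (\rot_f c)(\mathbf{1}\otimes\gamma - \gamma\otimes\mathbf{1})$ (for any generic immersed representative $c$ of $\gamma$) into the three terms of $\Delta$ and using the centrality of $\mathbf{1}$ in the Goldman bracket, the explicit rotation-number terms cancel and one is left with the framing-free expression
$$
\Delta(\alpha,\beta) = \sum_{p\in a\cap b}\varepsilon(p)\,\delta(a_pb_p) - \ad(\alpha)(\delta b) + \ad(\beta)(\delta a).
$$
Equivalently, one may take $a,b$ with $\rot_f a = \rot_f b = 0$; then every $a_pb_p$ also has vanishing rotation number, and the whole computation takes place among representatives on which $\delta$ itself is honestly defined and invariant under the moves $(\omega2),(\omega3)$ and isotopy.

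It remains to prove that the displayed expression vanishes in $|\Z\pi|^{\otimes 2}$ — or, after applying $\mathrm{id}\otimes\epsilon_1$, the corresponding identity for the trivial-loop components, which is a statement about the Alekseev--Torossian divergence cocycle / Enomoto--Satoh trace of \S\ref{subsec:frame}. This is Turaev's Leibniz-rule computation for \eqref{eq:compatibility}, carried out at the level of $|\Z\pi|^{\otimes 2}$ rather than the quotient: one sorts the transverse double points of each $a_pb_p$ into the self-intersections of $a$, the self-intersections of $b$, the basepoint $p$, and the points of $a\cap b$ other than $p$, sums over $p$, and matches the result term by term against $\ad(\alpha)(\delta b)$ and $\ad(\beta)(\delta a)$; since $\delta$ is genuinely invariant — not merely modulo $\Z\mathbf{1}$ — under the moves relating two generic representatives of a class with the same rotation number, the identity holds on the nose. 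This is precisely the content of the statement, established in \cite{genus0, highergenus}, that $(|\Z\pi|, [-,-], \delta^f)$ is an involutive Lie bialgebra, the framed counterpart of Turaev's theorem \cite{Tu91}. The main obstacle is this matching, and specifically the cancellation of the trivial-loop terms produced by the basepoint and cross-point double points — which is exactly what the rotation-number additivity established in the previous paragraph is designed to control.
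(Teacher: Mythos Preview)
Your proposal is correct in outline and actually supplies more than the paper does: the paper gives no argument beyond the sentence ``Similarly $(|\Z\pi|,[-,-],\delta^f)$ is an involutive Lie bialgebra,'' deferring entirely to \cite{genus0,highergenus}. Your reduction via rotation-number additivity is the right idea and your verification that the framing-correction terms cancel, leaving the framing-free defect $\sum_p\varepsilon(p)\,\delta(a_pb_p)-\ad(\alpha)(\delta b)+\ad(\beta)(\delta a)$, is clean and correct.

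Two remarks. First, there is a small inconsistency in your bookkeeping: the smoothing of $a_pb_p$ that yields $\rot_f(a_pb_p)=\rot_f a+\rot_f b$ is precisely the one with \emph{no} self-intersection at $p$ (the two corner-smoothings connect adjacent branches in the cyclic order at $p$), so ``the basepoint $p$'' should not appear in your list of double-point types for $a_pb_p$. The double points are exactly the self-intersections of $a$, those of $b$, and the points of $a\cap b$ other than $p$; the last family cancels in pairs under the swap $p\leftrightarrow q$, and the first two match $\ad(\beta)(\delta a)$ and $\ad(\alpha)(\delta b)$ on the nose in $|\Z\pi|^{\otimes2}$, not merely modulo $\Z\mathbf{1}$. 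This is indeed Turaev's computation carried out without passing to the quotient, and it goes through because with fixed representatives every term is an honest element of $|\Z\pi|^{\otimes2}$.

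Second, your opening paragraph---the anti-cocommutativity reduction to $(\mathrm{id}\otimes\epsilon_1)(\Delta)=0$---is valid but superfluous: once you have written $\Delta$ as the framing-free expression above, you end up proving its full vanishing anyway, so the reduction to the trivial-loop component is never used. You could drop that paragraph without loss.
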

Thus we obtain the {\it framed Goldman--Turaev Lie bialgebra}
$(|\Z\pi|, [-,-], \delta^f)$ of the framed surface $(S, f)$. 
Since $\delta^f(\mathbf{1}) = 0$, $\delta^f$ can be regarded as a map
$|\Z\pi|/\Z\mathbf{1} \to |\Z\pi|^{\otimes 2}$. 
As will be reviewed in \S\ref{sec:form}, 
the formality problem of $\delta^f$ 
is closely related to the Kashiwara--Vergne problem.
\par
\medskip
We conclude this subsection by reviewing the topological classification 
of framings. The mapping class group $\mathcal{M}(S)$ of the surface $S$ 
is defined to be the group of path-connected components of the group of 
orientation-preserving diffeomorphisms 
fixing the boundary $\partial S$ {\it pointwise}.
The group $\mathcal{M}(S)$ acts naturally on the set $F(S)$ by 
$\rot_{f\varphi}(\alpha) = \rot_f(\varphi\circ\alpha)$ 
for any $\varphi \in \mathcal{M}(S)$ and $\alpha \in \hat\pi$. 
The rotation number along each boundary loop is an $\mathcal{M}(S)$-invariant of 
framings. On the other hand, 
any framing $f$ induces a spin structure $\mathfrak{s}_f$ on the surface $S$. 
Moreover we can define a new invariant $\tilde A(f)$ of a framing $f$,
which is defined to be the non-negative greatest common divisor of the set 
$\{\rot_f(\alpha); \,\, \text{$\alpha$ is a non-separating simple closed curve in $S$}\}$.
If $g = \mathrm{genus}(S)$ is greater than $1$, we have $\tilde A(f) = 1$ for any framing $f$ \cite[Lemma 2.4]{Ka17}. 

\begin{thm}[\cite{J80a, Ka17}]\label{thm:fr}
Two framings $f$ and $f'$ of the surface $S$ 
are in the same $\mathcal{M}(S)$-orbit 
if and only if \begin{itemize}
\item All the rotation numbers along boundary loops coincide.
\item The induced spin structures $\mathfrak{s}_f$ and 
$\mathfrak{s}_{f'}$ are in the same $\mathcal{M}(S)$-orbit.
\item $\tilde A(f) = \tilde A(f')$.
\end{itemize}
\end{thm}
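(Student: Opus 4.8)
The plan is to treat the two implications separately. The ``only if'' direction is routine: since every $\varphi\in\mathcal{M}(S)$ fixes $\partial S$ pointwise, $\varphi\circ\partial_i=\partial_i$ for each boundary loop, whence $\rot_{f\varphi}(\partial_i)=\rot_f(\varphi\circ\partial_i)=\rot_f(\partial_i)$ and the boundary rotation numbers are $\mathcal{M}(S)$-invariant; the assignment $f\mapsto\mathfrak{s}_f$ is natural in $(S,f)$, so $\mathfrak{s}_{f\varphi}$ and $\mathfrak{s}_f$ always lie in one $\mathcal{M}(S)$-orbit; and $\varphi$ permutes the non-separating simple closed curves compatibly with $\rot_{f\varphi}(\alpha)=\rot_f(\varphi\circ\alpha)$, so the set of rotation numbers defining $\tilde A$, hence $\tilde A$ itself, is preserved. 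The substance is the converse, which I would approach by analysing the affine action of $\mathcal{M}(S)$ on $F(S)$, reducing first to Johnson's spin classification \cite{J80a} and then to a Dehn-twist computation.

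For the affine picture, recall that $F(S)$ is a torsor over $H^1(S;\Z)$ with $\rot_{f+u}(\alpha)=\rot_f(\alpha)+\langle u,[\alpha]\rangle$, and that $\mathcal{M}(S)$ acts on it affinely with linear part the natural action on $H^1(S;\Z)$. The map $F(S)\to H^1(\partial S;\Z)$ recording the boundary rotation numbers is $\mathcal{M}(S)$-equivariant and affine over the restriction $H^1(S;\Z)\to H^1(\partial S;\Z)$, and its fibres are torsors over $K:=\ker\bigl(H^1(S;\Z)\to H^1(\partial S;\Z)\bigr)$, a free abelian group of rank $2g$ which one identifies with $H^1$ of the closed surface obtained by capping off $\partial S$. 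Reduction mod $2$ carries $f$ to $\mathfrak{s}_f$, and on a fixed fibre this is the projection $K\to K/2K$ followed by the standard identification of the set of spin structures with prescribed boundary behaviour as a torsor over $H^1(S;\Z/2)$. Hence, by Johnson's theorem \cite{J80a} that spin structures with the same boundary data lie in one $\mathcal{M}(S)$-orbit exactly when their Arf invariants agree, after replacing $f'$ by $f'\varphi$ for a suitable $\varphi\in\mathcal{M}(S)$ we may assume that the boundary rotation numbers of $f$ and $f'$ coincide and that $\mathfrak{s}_f=\mathfrak{s}_{f'}$, i.e.\ $f'-f\in 2K$, with still $\tilde A(f)=\tilde A(f')$. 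It then remains to show that the subgroup $R(f)\subseteq K$ of translations realised by elements of $\mathcal{M}(S)$ that fix both the boundary rotation numbers and $\mathfrak{s}_f$ contains $2K$ when $\tilde A(f)=1$, and in general is the largest subgroup of $2K$ compatible with the value $\tilde A(f)$.

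The engine for this is the standard transformation formula for how a Dehn twist changes rotation numbers: for a simple closed curve $c$ one has $\rot_{fT_c}(\alpha)=\rot_f(\alpha)+\langle[\alpha],[c]\rangle\,(\rot_f(c)+1)$, so $T_c$ translates $f$ by $(\rot_f(c)+1)\,\mathrm{PD}[c]$, where $\mathrm{PD}[c]\in H^1(S;\Z)$ is the homomorphism $[\alpha]\mapsto\langle[\alpha],[c]\rangle$; this is $0$ for $c$ separating, lies in $K$ for $c$ non-separating, and lies in $2K$ precisely when $\rot_f(c)$ is odd, which is also exactly when $T_c$ fixes $\mathfrak{s}_f$. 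When $\tilde A(f)=1$ --- in particular whenever $g\geq 2$, by \cite[Lemma 2.4]{Ka17} --- the plan is to produce non-separating simple closed curves $c_1,\dots,c_{2g}$ with each $\rot_f(c_i)$ odd and with $\mathrm{PD}[c_1],\dots,\mathrm{PD}[c_{2g}]$ a $\Z$-basis of $K$; iterating the associated twists then realises all of $2K$, so $R(f)=2K$ and the proof concludes. The remaining low-genus cases I would handle directly: for $g=0$ there are no non-separating curves and the boundary rotation numbers already pin down $f$, while for $g=1$ the fibres of the boundary-rotation-number map have rank $2$ and the rotation numbers of non-separating curves are confined to $\tilde A(f)\,\Z$, and one checks that $R(f)$ coincides with the constraint that $\tilde A(f)$ imposes.

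The step I expect to be the main obstacle is this realisation: for a framing with $\tilde A(f)=1$, exhibiting a $\Z$-basis of $K$ consisting of Poincaré duals of non-separating simple closed curves all of odd $f$-rotation number. This comes down to understanding the image of the winding-number function on the set of non-separating simple closed curves and how handle slides $c\mapsto T_d(c)$ move around within it, and it is precisely here that the genus-$1$ behaviour departs from the higher-genus behaviour and that the invariant $\tilde A$ is forced into the statement. Note that neither the Goldman bracket nor the Turaev cobracket plays any role in this argument; the statement is purely a matter of the low-dimensional topology of framed surfaces.
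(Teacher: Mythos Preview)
The paper does not prove this theorem; it is stated with the attribution \cite{J80a, Ka17}, and the only commentary offered is that for $g\ge2$ the result is essentially Johnson's spin classification, together with the remark about when the orbit set with prescribed boundary data is infinite. So there is no in-paper proof to compare against.

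As a self-contained plan, your sketch follows the line of argument in \cite{Ka17}: the torsor description of $F(S)$ over $H^1(S;\Z)$, passage to a fibre of the boundary-rotation-number map (a $K$-torsor with $K$ of rank $2g$), reduction mod~$2$ to spin structures and invocation of Johnson's Arf classification \cite{J80a} to arrange $f'-f\in 2K$, and then the Dehn-twist translation formula to realise elements of $2K$. You correctly identify the crux --- for $\tilde A(f)=1$, producing non-separating curves of odd $f$-rotation number whose Poincar\'e duals form a $\Z$-basis of $K$ --- and you honestly flag it as the unresolved step. In \cite{Ka17} this is carried out by explicit manipulations of a geometric symplectic basis via handle-slides (which is where \cite[Lemma~2.4]{Ka17} enters), and the $g=1$ case does require the separate analysis you anticipate, with $\tilde A$ genuinely obstructing transitivity. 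So your plan is sound and aligned with the published argument; what remains is to execute the realisation step, which is where the actual content of the theorem lives.
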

If $g \geq 2$, the theorem is essentially due to Johnson \cite{J80a},
where he proved that a spin structure is classified by the Arf invariant
modulo the mapping class group $\mathcal{M}(S)$. 
The orbit space $F(S)/\mathcal{M}(S)$ with prescribed boundary data
is an infinite set if and only if $g=1$ and 
the rotation number of each boundary loop equals $-1$.
Only in this case, it is possible that there is no solution of the Kashiwara--Vergne
problem associated with the framed surface (Theorem \ref{thm:exist}).
\par
We can consider the relative version for Theorem
\ref{thm:fr}, i.e., where our homotopy fixes 
a prescribed framing $TS\vert_{\partial S} \cong \partial S\times \R^2$
on the boundary. This version was already established by 
Randal-Williams \cite{RW14} for $g \geq 2$. See \S2.4 in \cite{Ka17} 
for $g=1$.\par



\section{Homological descriptions of the cobracket}
\label{sec:homol}
In this section we give a homological description of the Turaev cobracket
after R.\ Hain. Our approach is slightly different from Hain's original one \cite{Hain18}.

\subsection{A relative twisted chain $s_\alpha$}\label{sec:}

We denote
$\Rp := \mathopen]0, +\infty\mathclose[$ and
$\Rnn := \mathopen[0, +\infty\mathclose[ \subset \R$.
Let $E \to B$ be a $\smooth$ $\R$-vector bundle of rank $2$ with 
Riemannian metric $\Vert\cdot\Vert$. We identify the base space $B$ 
with the zero section in the total space $E$.
The multiplicative group $\Rp$ acts on 
the complement $E_0 := E\setminus B = \{e \in E; \,\, \Vert e\Vert \neq 0\}$
by scalar multiplication. 
The orbit space of the action $\Sph(E) := E_0/\Rp \cong \{e \in E; \,\,\Vert e\Vert
= 1\}$ is an $S^1$ bundle over $B$. 
For any $x \in E_0$, we denote $[x] := x\bmod \Rp \in \Sph(E)$ and 
$[x]_0 := \Rnn\cdot x \subset E$. 
Then the space 
$$
Q(E) := \{([x], e)\in \Sph(E)\times E; \,\, e \in [x]_0\}
$$
is naturally diffeomorphic to the product $\Sph(E) \times \Rnn$
by the map $([x], \lambda) \in \Sph(E)\times \Rnn \mapsto ([x], \lambda \Vert x\Vert\inv x) \in Q(E)$, and admits a natural projection $\varpi: Q(E) \to E$, 
$([x], e) \mapsto e$, which is the {\it real oriented blow-up of $E$ along $B$}.
If $\partial B = \emptyset$, we have $\partial Q(E) = \Sph(E) \times \{0\}$.
\par
In this section we write simply $M = \IntS$, the interior of the surface $S$. 
We denote by $\Delta_M \subset M\times M$ the diagonal set, whose 
normal bundle is given by 
$$
E := N_{M\times M/\Delta_M} = (TM\times_MTM)/\Delta(TM) \cong TM,
\quad (u, v) \mapsto v-u.
$$
Choose a tubular neighborhood of the diagonal $h : E \hookrightarrow M\times M$.
Then we glue the space $Q(E)$ with the configuration space 
$C_2(M) := M\times M \setminus \Delta_M$ 
by the map $h$:
$$
\hat C_2(M) := Q(E) \cup_h C_2(M),
$$
which admits a natural projection $\varpi: \hat C_2(M) \to M\times M$ 
induced by the map $Q(E) \overset\varpi\to E \overset{h}\to h(E) \subset M\times M$.
\par
We identify $\varpi\inv(\Delta_M)$ with $\Sph(TM)$ as above. 
Let $f$ be a framing of $S$.
We denote by $\mathbf{f}_f := f^*1_{S^1} \in H^1(\Sph(TM); \Z)$ 
the pullback of the positive generator $1_{S^1} \in H^1(S^1; \Z)$ 
by the framing $f$. Hain uses the symbol $\xi$ for a framing, 
and writes $f_\xi$ for the cohomology class $\mathbf{f}_f$ in \cite[\S3]{Hain18}. 
On the other hand, we introduce a local system $\Ss$ on $M\times M$ 
whose stalk is defined by 
$$
\Ss_{(p_1, p_2)} := \Z\Pi M(p_1, p_2) \otimes \Z\Pi M(p_2, p_1)
$$
for any $(p_1, p_2) \in M\times M$. Here we denote by 
$\Pi M(p_1, p_2)$ the homotopy set of paths $[([0,1], 0, 1), (M, p_1, p_2)]$.
As in the previous section, we denote $\hat\pi = [S^1, M] = [S^1, S]$.
Then one can define a natural map $\nu: H_0(\Sph(TM); \varpi^*\Ss) \to 
\Z\hat\pi\otimes \Z\hat\pi$, and the composite 
\begin{equation}
H_2(\hat C_2(M), \Sph(TM); \varpi^*\Ss ) 
\overset{\pa_*}\to 
H_1(\Sph(TM); \varpi^*\Ss)
\overset{\mathbf{f}_f\cap}\to 
H_0(\Sph(TM); \varpi^*\Ss) \overset\nu\to 
\Z\hat\pi\otimes \Z\hat\pi.
\label{eq:cap}
\end{equation}
\par
Now we consider immersed curves in a $2$-dimensional $\R$-vector space
$V (\cong \R^2)$ with a Euclidean metric $\Vert\cdot\Vert$.
The quotient space of the scalar multiplication $\Sph(V) = (V\setminus\{0\})/\Rp$ 
is diffeomorphic to the circle $S^1 = \R/2\pi\Z$. We choose the map 
$$
\Delta_V\times V= V\times V \to V\times V, \quad 
(x,y) \mapsto (x-y, x+y), \,\, (2\inv(u+v), 2\inv(v-u)) \leftarrow\hskip-2mm\vert
(u,v)
$$
as a tubular neighborhood of the diagonal $\Delta_V$ in $V\times V$. 
The space $\hat C_2(V)$ is given by 
$$
\hat C_2(V) = \{([x], u, v) \in \Sph(V)\times V\times V; \,\, v-u \in [x]_0\}, 
$$
and the map $\varpi: \hat C_2(V) \to V\times V$ by $([x], u, v) \mapsto (u,v)$. 
\par
We study immersed curves in the $2$-manifold $V$ 
in the following 2 cases. Let $I \subset \R$ be a non-empty interval. 
First we consider a single $\smooth$ immersion $\alpha: I \to V$, 
and triangles $T_+:= \{(t_1,t_2) \in I\times I; \,\, t_1\leq t_2\}$ and 
$T_-:= \{(t_1,t_2) \in I\times I; \,\, t_1\geq t_2\}$. We denote 
$\Delta_\pm := T_\pm \cap \Delta_I$, where $\Delta_I \subset I\times I$
is the diagonal. 
\begin{lem}\label{lem:alphaalpha}
The map $(\alpha, \alpha):  I\times I \to V\times V$, $(t_1, t_2) \mapsto 
(\alpha(t_1), \alpha(t_2))$, restricted to the triangle $T_\pm$, canonically lifts to $\hat C_2(V)$ near the diagonal
$\Delta_\pm$. 
\end{lem}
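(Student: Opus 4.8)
The plan is to produce the lift by hand on a neighbourhood of $\Delta_\pm$ in $T_\pm$, the essential input being that an immersion has a smooth, nowhere-vanishing difference quotient near the diagonal.

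First I would record the source of canonicity. Over the configuration space $C_2(V)=V\times V\setminus\Delta_V$ the projection $\varpi:\hat C_2(V)\to V\times V$ restricts to a diffeomorphism onto $C_2(V)$, because the condition $v-u\in[x]_0$ forces $[x]=[v-u]$ once $u\neq v$. Since $\alpha$ is an immersion it is locally injective, so there is an open neighbourhood $U$ of the diagonal $\Delta_I\subset I\times I$ on which $t_1\neq t_2$ implies $\alpha(t_1)\neq\alpha(t_2)$. On $U\setminus\Delta_I$ the map $(\alpha,\alpha)$ therefore has a unique lift to $\hat C_2(V)$; the whole content of the lemma is that this lift extends smoothly over $\Delta_\pm$, and uniqueness on the dense open part makes the extension canonical.

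Next I would introduce the smooth map
$$\phi:I\times I\to V,\qquad\phi(t_1,t_2):=\int_0^1\dot\alpha\bigl(t_1+s(t_2-t_1)\bigr)\,ds,$$
which satisfies $\alpha(t_2)-\alpha(t_1)=(t_2-t_1)\,\phi(t_1,t_2)$ for all $(t_1,t_2)$ and $\phi(t,t)=\dot\alpha(t)$. As $\alpha$ is an immersion, $\phi$ is non-zero along $\Delta_I$, hence on a neighbourhood of $\Delta_I$; after shrinking $U$ I may assume $\phi(U)\subset V\setminus\{0\}$. Now define, on $U\cap T_+$, the map $(t_1,t_2)\mapsto\bigl([\phi(t_1,t_2)],\alpha(t_1),\alpha(t_2)\bigr)$, and on $U\cap T_-$ the map $(t_1,t_2)\mapsto\bigl([-\phi(t_1,t_2)],\alpha(t_1),\alpha(t_2)\bigr)$. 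Each lands in $\hat C_2(V)$: indeed $v-u=\alpha(t_2)-\alpha(t_1)=(t_2-t_1)\phi(t_1,t_2)$ lies in $\Rnn\cdot\phi(t_1,t_2)=[\phi(t_1,t_2)]_0$ when $t_2\geq t_1$ and in $\Rnn\cdot(-\phi(t_1,t_2))=[-\phi(t_1,t_2)]_0$ when $t_2\leq t_1$. Each is $\smooth$, being a composite of $\phi$, a sign change, the smooth quotient map $V\setminus\{0\}\to\Sph(V)$, and $(\alpha,\alpha)$; and $\varpi$ sends it to $(\alpha(t_1),\alpha(t_2))$. Off $\Delta_\pm$ it agrees with the unique lift above, so it is the desired canonical extension; on $\Delta_\pm$ its value is $\bigl([\pm\dot\alpha(t)],\alpha(t),\alpha(t)\bigr)$.

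The one delicate point — the step I would single out — is the sign that distinguishes $T_+$ from $T_-$. The exceptional circle $\Sph(V)$ sitting over a diagonal point $(\alpha(t),\alpha(t))$ is approached from the two sides of $\Delta_I$ along opposite tangent rays, which is precisely why the lift on $T_+$ tends to $[\dot\alpha(t)]$ while the one on $T_-$ tends to $[-\dot\alpha(t)]$; getting this consistent with the sign convention $v-u\in[x]_0$ defining $\hat C_2(V)$ is the substance of the verification. I would also point out that the lemma is genuinely local: away from the diagonal the lift is no longer forced, since the image of $(\alpha,\alpha)$ meets $\Delta_V$ at the self-intersections of $\alpha$ — where the fibre of $\varpi$ is a whole circle and $\phi$ vanishes — so the neighbourhood $U$ of $\Delta_I$ cannot be dropped.
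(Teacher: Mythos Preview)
Your proof is correct and follows essentially the same route as the paper: both use the difference quotient $\phi(t_1,t_2)=\int_0^1\dot\alpha((1-s)t_1+st_2)\,ds$ to write the lift smoothly across the diagonal, obtaining $[\pm\dot\alpha(t)]$ on $\Delta_\pm$. The only presentational difference is that the paper first gives the lift piecewise and then invokes the integral to check smoothness, whereas you name $\phi$ first and write the lift uniformly; your added remarks on canonicity and on why the construction is only local are sound but go a little beyond what the paper records.
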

\begin{proof} The lift $\hat\alpha_\pm: T_\pm \to \hat C_2(V)$ is explicitly given by
$$
\hat \alpha_\pm(t_1, t_2) := \begin{cases}
([\alpha(t_2)-\alpha(t_1)], \alpha(t_1), \alpha(t_2)), & \text{if $t_1\neq t_2$,}\\
([\pm\overset\cdot\alpha(t_0)], \alpha(t_0), \alpha(t_0)), & 
\text{if $(t_1, t_2) =: (t_0, t_0) \in \Delta_\pm$.}
\end{cases}
$$
The first line makes sense near the diagonal $\Delta_I$ since $\alpha$ is an immersion. On the other hand, 
$$
[\alpha(t_2)-\alpha(t_1)] = [
\dfrac{t_2-t_1}{|t_2-t_1|}
\int^1_0\overset\cdot\alpha((1-s)t_1+st_2) ds]
\in \Sph(V) \cong S^1
$$
is $\smooth$ in $(t_1, t_2)$ near $(t_0,t_0)$ since $\overset\cdot\alpha(t_0)
\neq 0$. This proves that the map $\hat\alpha_\pm$ is a well-defined lift of the map
$(\alpha, \alpha)$ near the diagonal $\Delta_\pm$. 
\end{proof}
\par
Next we consider two curves intersecting transversely at a point.
Let $I \subset \R$ be an interval including the origin $0 \in \R$, 
and $\alpha_1$ and $\alpha_2: I \to V$ $\smooth$ immersions with 
$\alpha_1(0) = \alpha_2(0) (=: p_0 \in V)$. Moreover we assume that
the vectors $\overset\cdot\alpha_1(0)$ and $\overset\cdot\alpha_2(0)$ 
are linearly independent in $T_{p_0}V$ with local intersection number 
$\varepsilon_{p_0} = \varepsilon(\overset\cdot\alpha_1(0), 
\overset\cdot\alpha_2(0)) \in \{\pm1\}$. 
Then we consider the map $\alpha := (\alpha_1, \alpha_2):
I \times I \to V\times V$, $(t_1, t_2) \mapsto (\alpha_1(t_1), \alpha_2(t_2))$.
%
%
The map $\alpha$ does not lift to $\hat C_2(V)$. We need to take 
a (real oriented) blow-up of $I\times I$ at the point $(0,0)$
$$
\widehat{I\times I} := \{(\theta, t_1, t_2) \in S^1\times I\times I; \,\,
(t_1, t_2) = (r\cos\theta, r\sin\theta) \,\text{for some $r \geq 0$}\}.
$$
Then a lift $\hat\alpha: \widehat{I\times I} \to \hat C_2(V)$ is defined by 
\begin{equation}\label{eq:alphahat}
\hat\alpha(\theta, t_1, t_2):= 
\begin{cases}
([\alpha_2(t_2)-\alpha_1(t_1)], \alpha_1(t_1), \alpha_2(t_2)), & \text{if $(t_1, t_2) 
\neq (0,0)$,}\\
([-(\cos\theta)\overset\cdot\alpha_1(0)
+ (\sin\theta)\overset\cdot\alpha_2(0)], 
p_0, p_0), & \text{if $(t_1, t_2) = (0,0)$.}
\end{cases}
\end{equation}
The second line makes sense since 
the vectors $\overset\cdot\alpha_1(0)$ and $\overset\cdot\alpha_2(0)$ 
are linearly independent. 
In order to check the smoothness near $S^1\times \{(0,0)\}$, 
we use a local coordinate $\mathopen[0, \delta\mathclose[ \times S^1
\to \widehat{I\times I}$, $(\rho, \theta) \mapsto (\theta, \rho\cos\theta, 
\rho\sin\theta)$, for a sufficiently small $\delta > 0$, 
and recall the formula $\alpha_i(t) - p_0 = t\int^1_0
\overset\cdot\alpha_i(st)ds$, $i = 1,2$.
Then $[\alpha_2(\rho\sin\theta) - \alpha_1(\rho\cos\theta)]$ equals
$$
[(\sin\theta)\int^1_0\overset\cdot\alpha_2(s\rho\sin\theta)ds
- (\cos\theta)\int^1_0
\overset\cdot\alpha_1(s\rho\cos\theta)ds] \in \Sph(V)
$$
which is $\smooth$ in $(\rho, \theta)$ near $S^1\times \{(0,0)\}$. 
This proves the smoothness of the map $\hat\alpha$. 
It should be remarked that the map $\hat\alpha$ is 
uniquely determined by $\alpha$. In fact, it must coincide with 
$(\alpha_1, \alpha_2)$ on the complement $\widehat{I\times I}\setminus
(S^1\times\{(0,0)\})$ which is dense in $\widehat{I\times I}$.
\par
Now we go back to the situation where we defined the framed Turaev cobracket.
Let $M$  and $f$ be as above, and 
$\alpha: S^1 \to M$ a generic 
$\smooth$ immersion. The set parametrizing double points of $\alpha$
$$
D_\alpha = \{(t_1, t_2) \in S^1\times S^1\setminus \Delta_{S^1}; \,\,
\alpha(t_1) = \alpha(t_2)\}
$$
is a finite set. Moreover, using the identification $S^1 = \R/2\pi\Z$, we introduce the map 
$q: S^1\times [0,2\pi] \to S^1\times S^1$, $(\theta, t) \mapsto (\theta, \theta+t)$, and we take the real oriented blow-up of $S^1\times [0,2\pi]$ 
at all points in the set $q\inv(D_\alpha)$, which we denote by
$Q_\alpha(S^1\times [0,2\pi])$. Then we have a canonical lift 
$\hat\alpha: Q_\alpha(S^1\times [0,2\pi]) \to \hat C_2(M)$ of 
the map $(\alpha, \alpha)\circ q: S^1\times [0, 2\pi] \to M\times M$, 
$(t_1, t_2) \mapsto (\alpha(t_1), \alpha(t_2))$. 
Here the lift near $S^1\times\{0\}$ (resp. $S^1\times\{2\pi\}$) comes from $\hat\alpha_+$ (resp. $\hat\alpha_-$) in Lemma \ref{lem:alphaalpha}. 
\par
A canonical horizontal lift $\bar s_\alpha: S^1\times [0, 2\pi] \to \Ss$ 
of the map $(\alpha, \alpha) \circ q: S^1\times [0, 2\pi] \to M\times M$ 
is defined by 
$$
\bar s_\alpha(\theta, t) := 
\begin{cases}
\alpha\vert_{[\theta, \theta+t]}\otimes \alpha\vert_{[\theta+t, \theta]},
& \text{if $t \neq 0, 2\pi$},\\
1\otimes \alpha, & \text{if $t = 0$},\\
\alpha\otimes 1, & \text{if $t = 2\pi$}.
\end{cases}
$$
Here the subset $[\theta, \theta+t] \subset S^1$ 
is defined to be the closed interval running from $\theta$ to $\theta+t$ in the positive
direction, and $\alpha\vert_{[\theta, \theta+t]}$ is the based homotopy class 
of the segment of $\alpha$ restricted to the interval $[\theta, \theta+t]$. 
We denote by $s_\alpha$ the lift of $\bar s_\alpha$ along the map $\hat \alpha$, 
which is a twisted cycle and defines a twisted homology class 
$$
[s_\alpha] \in H_2(\hat C_2(M), \Sph(TM); \varpi^*\Ss).
$$
\begin{prop}\label{prop:homol}
$$
\nu(\mathbf{f}_f\cap\pa_*[s_\alpha]) = 
\delta\alpha + (\rot_f\alpha)(\mathbf{1}\otimes\alpha - \alpha\otimes \mathbf{1}) \in \Z\hat\pi\otimes \Z\hat\pi.
$$
The right-hand side exactly equals the framed Turaev cobracket $\delta^f\alpha$
with respect to $f$. 
\end{prop}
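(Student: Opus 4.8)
The plan is to evaluate the three maps of \eqref{eq:cap} on $[s_\alpha]$ by reading off the boundary of the blown-up surface $Q_\alpha(S^1\times[0,2\pi])$ and tracking how $\hat\alpha$ and $\bar s_\alpha$ restrict to it. First I would note that $Q_\alpha(S^1\times[0,2\pi])$ is a compact oriented surface, with the orientation pulled back from $S^1\times[0,2\pi]$, and that its boundary is
$$
\pa\, Q_\alpha(S^1\times[0,2\pi]) = (S^1\times\{0\}) \ -\ (S^1\times\{2\pi\})
\ +\ \sum_{(t_1,t_2)\in D_\alpha} E_{(t_1,t_2)},
$$
where $E_{(t_1,t_2)}$ is the exceptional circle produced by blowing up the unique point of $q\inv(D_\alpha)$ lying over $(t_1,t_2)$, with its induced orientation. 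By construction $\hat\alpha$ carries every boundary component into $\Sph(TM) = \varpi\inv(\Delta_M)$: the two end circles because $(\alpha(\theta),\alpha(\theta))\in\Delta_M$, and each $E_{(t_1,t_2)}$ by the second line of \eqref{eq:alphahat}. Hence $\pa_*[s_\alpha] \in H_1(\Sph(TM);\varpi^*\Ss)$ is represented by the corresponding signed sum of twisted $1$-cycles, each carrying the restriction of $\bar s_\alpha$, and I would compute $\mathbf{f}_f\cap$ and $\nu$ on each piece in turn (the cap product with the untwisted class $\mathbf{f}_f$ acts on the underlying cycle, the coefficient section simply being carried along).

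For an exceptional circle: over a double point $(t_1,t_2)$, with $p_0 := \alpha(t_1) = \alpha(t_2)$, formula \eqref{eq:alphahat} shows $\hat\alpha$ maps $E_{(t_1,t_2)}$ into the single fibre $\Sph(T_{p_0}M)$ by $\theta\mapsto [-(\cos\theta)\overset\cdot\alpha(t_1)+(\sin\theta)\overset\cdot\alpha(t_2)]$, an ellipse traversed once; with the induced orientation this is a map of degree $\varepsilon(\overset\cdot\alpha(t_1),\overset\cdot\alpha(t_2))$ onto the fibre, the sign coming from the determinant of the linear isomorphism $(\cos\theta,\sin\theta)\mapsto-(\cos\theta)\overset\cdot\alpha(t_1)+(\sin\theta)\overset\cdot\alpha(t_2)$ together with the orientation-reversal built into the oriented blow-up. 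On the other hand the image in $M\times M$ of a small loop around a point of $q\inv(D_\alpha)$ is null-homotopic, so $\varpi^*\Ss$ is trivial near that point and $\bar s_\alpha$ is constant along $E_{(t_1,t_2)}$ with value $\alpha\vert_{[t_1,t_2]}\otimes\alpha\vert_{[t_2,t_1]}$ (the two loops based at $p_0$ into which $\alpha$ splits at the double point). Since $\mathbf{f}_f = f^*1_{S^1}$ restricts to the positive generator on each fibre $\Sph(T_pM)\cong S^1$, capping with $\mathbf{f}_f$ returns $\varepsilon(\overset\cdot\alpha(t_1),\overset\cdot\alpha(t_2))$ times a point bearing that coefficient, and $\nu$ sends it to $\varepsilon(\overset\cdot\alpha(t_1),\overset\cdot\alpha(t_2))\,(\alpha\vert_{[t_1,t_2]})\otimes(\alpha\vert_{[t_2,t_1]})$; summing over $D_\alpha$ gives precisely $\delta\alpha$.

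For the end circles: $S^1\times\{0\}$ goes by $\hat\alpha_+$ to the loop $\theta\mapsto([\overset\cdot\alpha(\theta)],\alpha(\theta),\alpha(\theta))$, whose underlying map $S^1\to\Sph(TM)$ has, after applying the framing $f$ and projecting to $\R^2\setminus\{0\}$, mapping degree $\rot_f\alpha$ by the definition of the rotation number (the radial deformation retraction $TM\setminus 0(M)\simeq\Sph(TM)$ does not change the degree). Since $\mathbf{f}_f = f^*1_{S^1}$, capping returns $\rot_f\alpha$ times a point, the coefficient there is $\bar s_\alpha(\theta,0) = 1\otimes\alpha$, and after $\nu$ the contribution is $(\rot_f\alpha)(\mathbf{1}\otimes\alpha)$. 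The circle $S^1\times\{2\pi\}$, which appears with the opposite orientation, goes by $\hat\alpha_-$ to $\theta\mapsto([-\overset\cdot\alpha(\theta)],\alpha(\theta),\alpha(\theta))$; the fibrewise antipodal map has degree $1$, so the same integer $\rot_f\alpha$ occurs, the coefficient is $\bar s_\alpha(\theta,2\pi) = \alpha\otimes 1$, and the contribution after $\nu$ is $-(\rot_f\alpha)(\alpha\otimes\mathbf{1})$. Adding the three contributions yields $\delta\alpha + (\rot_f\alpha)(\mathbf{1}\otimes\alpha - \alpha\otimes\mathbf{1})$, which is $\delta^f\alpha$ by the definition recalled in \S\ref{subsec:frame}.

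The hard part will be the sign bookkeeping: the induced orientations of the boundary circles (especially of the exceptional ones), the orientation convention entering the cap product, and the sign of the ellipse degree against the sign $\varepsilon(\overset\cdot\alpha(t_1),\overset\cdot\alpha(t_2))$ in Turaev's definition of $\delta\alpha$. Since all the identifications involved are canonical, once a single coherent orientation convention is fixed on $\hat C_2(M)$, on $\Sph(TM)$ and on the two types of blow-ups these signs are forced; I would pin them down in the local models of Lemma \ref{lem:alphaalpha} and of \eqref{eq:alphahat} and then conclude by naturality.
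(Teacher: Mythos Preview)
Your proposal is correct and follows essentially the same route as the paper's proof: decompose $\pa_*[s_\alpha]$ into the two end circles of $S^1\times[0,2\pi]$ and the exceptional circles over $q^{-1}(D_\alpha)$, then evaluate $\mathbf{f}_f\cap$ on each by computing the mapping degree into the fibre of $\Sph(TM)$ and read off the coefficient from $\bar s_\alpha$. The paper's own proof is much terser---it simply asserts that the end circles contribute $(\rot_f\alpha)(\mathbf{1}\otimes\alpha-\alpha\otimes\mathbf{1})$, computes the degree of the ellipse map $\theta\mapsto[-(\cos\theta)\overset\cdot\alpha(t_1)+(\sin\theta)\overset\cdot\alpha(t_2)]$ as $-\varepsilon(\overset\cdot\alpha(t_1),\overset\cdot\alpha(t_2))$, and cancels this against the reversed induced orientation of the exceptional circle---so your explicit treatment of the end circles and of the sign bookkeeping is in fact more detailed than what the paper provides, but the argument is the same.
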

\begin{proof}
The contribution of the boundary $\partial(S^1\times[0,2\pi])$ equals 
$(\rot_f\alpha)(\mathbf{1}\otimes\alpha - \alpha\otimes \mathbf{1})$. 
In order to compute the contribution of each $(t_1, t_2) \in D_\alpha$, 
we consider the construction \eqref{eq:alphahat} for 
$\alpha_1(t) := \alpha(t_1+t)$ and $\alpha_2(t) := \alpha(t_2+t)$.
Then the mapping degree of the map
$$
\theta \in S^1 \mapsto [-(\cos\theta)\overset\cdot\alpha_1(0)
+ (\sin\theta)\overset\cdot\alpha_2(0)] \in S^1
$$
equals $-\varepsilon(\overset\cdot\alpha_1(0), 
\overset\cdot\alpha_2(0)) = 
-\varepsilon(\overset\cdot\alpha(t_1), 
\overset\cdot\alpha(t_2))$. The orientation of $S^1$ induced from $S^1\times S^1$
is the negative one. Hence the contribution equals 
$\varepsilon(\overset\cdot\alpha(t_1), 
\overset\cdot\alpha(t_2))(\alpha\vert_{[t_1, t_2]})\otimes (\alpha\vert_{[t_2, t_1]})$.
The sum of all these contributions is nothing but the Turaev cobracket.
This proves the proposition.
\end{proof}
\begin{rmk}{\rm 
The difference of this computation from Hain's original one 
in \cite[\S5]{Hain18} comes from the blow-ups on $q\inv(D_\alpha)$. 
}\end{rmk}

\par

\subsection{Homotopy invariance}\label{subsec:inv}

In this subsection we prove the homotopy invariance of the framed 
Turaev cobracket. The present proof is due to Florian Naef.
The key to the proof is the following lemma.
\begin{lem}\label{lem:isom}
The map $\varpi: (\hat C_2(M), \Sph(TM)) \to (M\times M, \Delta_M)$ 
induces an isomorphism of twisted homology groups
$$
\varpi_*: H_*(\hat C_2(M), \Sph(TM); \varpi^*\Ss) \overset\cong\to 
H_*(M\times M, \Delta_M; \Ss).
$$
\end{lem}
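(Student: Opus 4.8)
The plan is to show that the blow-up map $\varpi$ induces a homotopy equivalence of pairs after a small adjustment, and then invoke naturality of twisted homology. First I would replace the pair $(\hat C_2(M), \Sph(TM))$ by a homotopy-equivalent pair that is easier to compare with $(M\times M, \Delta_M)$. The space $\hat C_2(M)$ is, by construction, the real oriented blow-up of $M\times M$ along the diagonal: it agrees with the configuration space $C_2(M)$ away from a tubular neighborhood $h(E)$ of $\Delta_M$, and over that neighborhood it is $Q(E)\cong \Sph(TM)\times\Rnn$. The key point is that $\varpi$ restricted to the "boundary collar" $\Sph(TM)\times\Rnn$ is the projection onto $\Sph(TM)$ followed by the inclusion of the zero section; on the rest it is the identity. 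So I would first observe that $\varpi\colon \hat C_2(M)\to M\times M$ is a proper map which is a homeomorphism over $C_2(M)=M\times M\setminus\Delta_M$, and which over a neighborhood of $\Delta_M$ is the composite of a deformation retraction $Q(E)\to \Sph(E)$ with the $S^1$-bundle projection $\Sph(E)\to \Delta_M\cong M$.

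Next I would assemble these local pictures into a statement about the pair. Collapsing the $\Rnn$-direction gives a deformation retraction of $\hat C_2(M)$ onto the subspace $C_2(M)\cup_{\partial}\Sph(TM)$, i.e. onto the closed configuration space $\overline{C_2(M)}$ whose boundary is $\Sph(TM)$. Under $\varpi$ this maps to $M\times M$, carrying the boundary $\Sph(TM)$ to $\Delta_M$ by the circle-bundle projection. Thus the induced map of pairs factors as
$$
(\hat C_2(M),\Sph(TM)) \ \simeq\ (\overline{C_2(M)},\Sph(TM)) \ \longrightarrow\ (M\times M,\Delta_M),
$$
and on the level of the long exact sequences of the pairs, the map on the "absolute" part $H_*(\overline{C_2(M)})\to H_*(M\times M)$ is an isomorphism because both deformation-retract onto (a thickening of) $\Delta_M\cong M$ compatibly with $\varpi$, while the map on the subspace part $H_*(\Sph(TM))\to H_*(\Delta_M)$ is the Gysin/edge map of the $S^1$-bundle. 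By the five lemma applied to the long exact sequences of the two pairs, it then suffices to know that $\varpi_*\colon H_*(\overline{C_2(M)})\to H_*(M\times M)$ and $H_*(\Sph(TM))\to H_*(\Delta_M)$ assemble into a map of long exact sequences in which the relative terms match up — equivalently, to excise the diagonal and identify $H_*(\hat C_2(M),\Sph(TM))$ with $H_*(\overline{C_2(M)},\Sph(TM))\cong H_*(C_2(M),\text{nbhd of }\partial)\cong H_*^{\mathrm{BM}}(C_2(M))$, and likewise $H_*(M\times M,\Delta_M)\cong H_*^{\mathrm{BM}}(M\times M\setminus\Delta_M) = H_*^{\mathrm{BM}}(C_2(M))$; since $\varpi$ is a homeomorphism over $C_2(M)$, these Borel–Moore homologies are canonically identified, compatibly with the local system (note $\varpi^*\Ss$ restricted to $C_2(M)$ is literally $\Ss|_{C_2(M)}$). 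Throughout, the local system causes no trouble because $\varpi$ is a homotopy equivalence over each of the relevant pieces and $\Ss$ is pulled back along it.

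The step I expect to be the main obstacle is the bookkeeping at the boundary: making precise that $\varpi$ sends the collar $Q(E)\cong\Sph(TM)\times\Rnn$ to the tubular neighborhood $h(E)$ in a way that is a fiber homotopy equivalence respecting the local system, so that excision genuinely applies to both pairs simultaneously and the resulting identifications of relative groups commute with $\varpi_*$. Once that compatibility is nailed down, the isomorphism in all degrees follows formally from excision (or from the five lemma on the pair sequences). I would also remark that this is exactly the input needed for the homotopy invariance of $\delta^f$: since $[s_\alpha]$ is defined via the left-hand side of Lemma~\ref{lem:isom} and $\varpi_*[s_\alpha]$ lives in $H_2(M\times M,\Delta_M;\Ss)$, which depends only on the homotopy class of $\alpha$, the composite in \eqref{eq:cap} computing $\delta^f\alpha$ is a homotopy invariant.
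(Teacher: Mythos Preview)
Your excision argument is essentially the paper's proof, though the paper states it more directly and without the Borel--Moore language: choose a closed tubular neighborhood $N$ of $\Delta_M$, observe that the pair $(\hat C_2(M),\Sph(TM))$ is homotopy equivalent to $(M\times M\setminus N^\circ,\partial N)$ by a deformation supported near $N$, and then excise $N^\circ$ to obtain
\[
H_*(M\times M\setminus N^\circ,\partial N;\Ss)\cong H_*(M\times M,N;\Ss)\cong H_*(M\times M,\Delta_M;\Ss),
\]
the last isomorphism because $N$ retracts onto $\Delta_M$. Your Borel--Moore repackaging is a valid rephrasing of the same excision step, since $\varpi$ is a homeomorphism over $C_2(M)$ and the local system is pulled back along it; the ``bookkeeping at the boundary'' you worry about is exactly the observation that the collar $Q(E)\cong\Sph(TM)\times\Rnn$ maps into $N$, which is what makes the composite identification agree with $\varpi_*$.

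However, the five-lemma paragraph that precedes the excision argument contains a genuine error and should be removed rather than presented as ``equivalent''. You assert that $H_*(\overline{C_2(M)})\to H_*(M\times M)$ is an isomorphism because both spaces deformation retract onto a thickening of $\Delta_M$. Neither does. Already for $M$ an open annulus one has $H_1(M\times M;\Z)\cong\Z^2$ while $H_1(\Delta_M;\Z)\cong\Z$, so $M\times M$ does not retract onto its diagonal; and $\overline{C_2(M)}\simeq C_2(M)$ cannot retract onto a thickening of the diagonal, which has been removed. Likewise the map on subspaces $H_*(\Sph(TM))\to H_*(\Delta_M)$ is the $S^1$-bundle projection and is not an isomorphism. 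So two of the three vertical arrows in your proposed five-lemma diagram fail to be isomorphisms, and the argument does not go through. The excision argument stands on its own and does not need this detour.
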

\begin{proof} Let $N \subset M\times M$ be a closed tubular 
neighborhood of the diagonal $\Delta_M$, and $N^\circ$ its interior.
Then the pair $(M\times M \setminus N^\circ, \pa N)$ is 
homotopy equivalent to $(\hat C_2(M), \Sph(TM))$ 
by a deformation supported near $N$. By excision, we have
$H_*(M\times M \setminus N^\circ, \pa N; \Ss) \cong
H_*(M\times M, N; \Ss) \cong H_*(M\times M, \Delta_M; \Ss)$.
Hence we obtain an isomorphism $H_*(\hat C_2(M), \Sph(TM); \varpi^*\Ss)
\cong H_*(M\times M, \Delta_M; \Ss)$, which can be realized by the induced
map $\varpi_*$. 
\end{proof}

By this lemma we can consider the map
\begin{equation}\label{eq:htpy}
\nu\circ (\mathbf{f}_f\cap)\circ\pa_*\circ{\varpi_*}\inv: 
H_2(M\times M, \Delta_M; \Ss) \to \Z\hat\pi\otimes \Z\hat\pi.
\end{equation}
The isomorphism $\varpi_*$ maps the relative homology class 
$[s_\alpha] \in H_2(\hat C_2(M), \Sph(TM); \varpi^*\Ss)$ 
to $[\bar s_\alpha] \in H_2(M\times M, \Delta_M; \Ss)$.
Since the homology class $[\bar s_\alpha]$ is homotopy invariant,
$$
\nu\circ (\mathbf{f}_f\cap)\circ\pa_*\circ{\varpi_*}\inv([\bar s_\alpha]) 
= \delta\alpha + (\rot_f\alpha)(\mathbf{1}\otimes\alpha - \alpha\otimes \mathbf{1}) \in \Z\hat\pi\otimes \Z\hat\pi
$$
is also homotopy invariant, as was to be shown.\par
Here we remark that this proof establishes the invariance of the framed Turaev 
cobracket also under the move $(\omega 1)$. 

\section{Formality of the Goldman bracket}
\label{sec:FG}

In this section we review the formality of the Goldman bracket 
for $\Sigma_{g, n+1}$ (with $g, n \geq 0$) over $\K$, 
a field of characteristic $0$.
But the filtration we consider here is different from that in \cite{KK16}. 
So we need to re-formulate the problem from the beginning.\par

\subsection{Group-like expansions.}
\label{subsec:exp}

For any free group $\pi$ of finite rank, the formality problem of the 
group ring $\K\pi$ is classically solved by group-like expansions. 
See, for example, \cite{Bou71} and also \cite{KK16}.
The group ring $\K\pi$ admits the augmentation map
$\varepsilon: \K\pi \to \K$
which maps a formal finite sum 
$\sum_{\gamma\in \pi} a_\gamma\gamma$, 
$a_\gamma \in \K$, to the finite sum 
$\sum_{\gamma\in \pi} a_\gamma$. 
The augmentation ideal $I\pi := \ker\varepsilon$ defines 
a natural decreasing filtration $\{(I\pi)^m\}_{m=0}^\infty$ on $\K\pi$, 
which induces the (completed) associated graded quotient
$
\gr(\K\pi) := \prod^\infty_{m=0}(I\pi)^m/(I\pi)^{m+1}
$
and the completed group ring 
$
\widehat{\K\pi} := \varprojlim_{m\to\infty}\K\pi/((I\pi)^m).
$
The group ring $\K\pi$ is a Hopf algebra with coproduct $\Delta$
satisfying $\Delta\gamma = \gamma\otimes\gamma$ for any 
$\gamma \in \pi$. The coproduct $\Delta$ descends to 
both of the algebras $\gr(\K\pi)$ and $\widehat{\K\pi}$,
and make them complete Hopf algebras.
The formality problem for $\K\pi$ 
asks whether the complete Hopf algebras 
$\gr(\K\pi)$ and $\widehat{\K\pi}$ are isomorphic to each other.
\par
In any complete Hopf algebra $A$, the Lie-like elements
$\mathrm{Lie}(A) := \{a \in A; \,\,
\Delta(a) = a\otimes 1 + 1\otimes a\}$ and the group-like
elements $\mathrm{Gr}(A):= \{g \in A\setminus\{0\}; \,\,
\Delta(g) = g\otimes g\}$ are in one-to-one correspondence 
by the exponential $\exp: \mathrm{Lie}(A) \to \mathrm{Gr}(A)$, 
$a \mapsto \exp(a) := \sum^\infty_{m=0}\frac1{m!}a^m$, 
and the logarithm $\log: \mathrm{Gr}(A)\to \mathrm{Lie}(A)$,
$g \mapsto \log(g) := \sum^\infty_{m=1}\frac{(-1)^{m+1}}{m}
(g-1)^m$.  The subset $\mathrm{Gr}(A)$ is a subgroup 
of the multiplicative group of the algebra $A$, and $\mathrm{Lie}(A)$
is a Lie subalgebra of $A$. \par
Let $H$ be the first homology group of the group $\pi$, 
$H := H_1(\pi; \K) = (\pi/[\pi,\pi])\otimes\K$, and 
we denote by $[\gamma] \in H$ the homology class 
of $\gamma \in \pi$. The completed tensor algebra 
$\widehat{T}(H) := \prod^\infty_{m=0}H^{\otimes m}$ 
has a natural filtration $\widehat{T}(H)_{\geq p}
:= \prod^\infty_{m=p}H^{\otimes m}$, $p \geq 0$, 
and a natural coproduct $\Delta$ satisfying 
$\Delta[\gamma] = [\gamma]\otimes 1 + 1\otimes [\gamma]$
for any $\gamma \in \pi$, which makes $\widehat{T}(H)$ 
a complete Hopf algebra. 
The Lie-like element $\mathrm{Lie}(\widehat{T}(H))$ equals the 
completed free Lie algebra over the vector space $H$.
We write simply $\mathrm{Lie}(\widehat{T}(H))_{\geq p}
:= \mathrm{Lie}(\widehat{T}(H))\cap \widehat{T}(H)_{\geq p}$
for any $p \geq 1$. \par
A group-like expansion $\theta: \pi \to \widehat{T}(H)$ is 
a group homomorphism $\theta: \pi \to \mathrm{Gr}(\widehat{T}(H))$
satisfying $\log \theta(\gamma) \equiv [\gamma] \pmod{
\mathrm{Lie}(\widehat{T}(H))_{\geq 2}}$
for any $\gamma \in \pi$. Its linear extension induces isomorphisms
of complete Hopf algebras $\theta: \gr(\K\pi) \to \widehat{T}(H)$ and 
$\theta: \widehat{\K\pi} \to \widehat{T}(H)$. The former one 
is independent of the choice of group-like expansions, so that 
we identify $\gr(\K\pi) = \widehat{T}(H)$. Thus any group-like expansion 
$\theta$ induces an isomorphism of complete Hopf algebras 
$\theta: \widehat{\K\pi} \to \widehat{T}(H)$ whose associated graded equals 
the identity on $\gr(\K\pi)$. In other words, the formality problem 
for the Hopf algebra $\K\pi$ is solved. 
This formulation is enough for two extreme cases $\Sigma_{0,n+1}$ and $\Sigma_{g,1}$, while we replace the filtration $\{{I\pi}^m\}_{m=0}^\infty$
by a new filtration $\{\K\pi(m)\}_{m=0}^\infty$ defined below 
in order to obtain a formal description of the Goldman--Turaev Lie bialgebra.
\par
We number the boundary components from $0$ to $n$: 
$\partial S = \coprod^n_{j=0}\partial_jS$, and choose 
a basepoint $*$ on the $0$-th boundary component $\partial_0S$. 
Denote by $\pi$ the fundamental group $\pi_1(S, *)$, which 
is a free group of rank $2g+n$. It admits 
free generators $\alpha_i$, $\beta_i$, $\gamma_j$, 
$1 \leq i \leq g$, $1 \leq j \leq n$, such that $\gamma_j$ is freely homotopic 
to the $j$-th boundary $\partial_jS$ with the positive direction and the product
$$
\gamma_0 := \prod^g_{i=1}\alpha_i\beta_i{\alpha_i}\inv{\beta_i}\inv
\prod^n_{j=1}\gamma_j
$$
is the based loop around the $0$-th boundary in the {\it negative} direction. 
Choose a point $*_j$ for each boundary component $\partial_jS$, 
$1\leq j \leq n$.
Then we can take $\gamma_j \in \pi$ as a concatenation of 
a path $\ell_j$ from $*$ to $*_j$, the boundary loop based at
$*_j$ and the inverse ${\ell_j}\inv$. 
Moreover we write $*_0 := * \in \partial_0 S$. 
\par
The filtration $\{\K\pi(m)\}_{m=1}^\infty$,
which we call the {\it weight filtration},  
is defined to be a unique minimal multiplicative filtration of two-sided ideals 
of the algebra $\K\pi$ satisfying the conditions $\K\pi(0) = \K\pi$, 
$\alpha_i-1, \beta_i-1 \in \K\pi(1)$, $1 \leq i \leq g$, and $\gamma_j -1 
\in \K\pi(2)$, $1 \leq j \leq n$. See \cite[\S\S3.1-2]{highergenus}. 
Here we remark this filtration coincides with (twice) the 
filtration $\{(I\pi)^m\}_{m=0}^\infty$ if $n=0$ or $g=0$. 
Even for any $\Sigma_{(g, n+1)}$, the topology on $\K\pi$ induced by the weight filtration 
coincides with that by the filtration $\{(I\pi)^m\}$. 
Hence we have a natural identification 
$
\widehat{\K\pi} = \varprojlim_{m\to\infty} \K\pi/\K\pi(m).
$
The (completed) associated graded quotient
$$
\grwt(\K\pi) := \prod^\infty_{m=0}\K\pi(m)/\K\pi(m+1)
$$
also has a natural complete Hopf algebra structure, and 
is described as follows.\par
We denote 
$$
x_i := [\alpha_i],\, y_i:= [\beta_i] \in H,\,\, 1\leq i\leq g, \quad
z_j:=[\gamma_j] \in H, \,\,1 \leq j \leq n, 
$$
and by $H^{(2)}$ the linear span of the $z_j$'s, which is the annihilator 
of the intersection number $H\times H \to \K$. 
On the other hand, we cap closed disks on the boundary components 
from $1$ to $n$ to obtain a surface $\bar S$ diffeomorphic to $\Sigma_{g,1}$.
The inclusion homomorphism $i_*: H = H_1(S) \to H_1(\bar S)$ is 
a surjection, and its kernel equals $H^{(2)}$. 
In particular, one can identify $H/H^{(2)} = H_1(\bar S)$. 
Denote by $\bar x_i$ and $\bar y_i \in H_1(\bar S) = H/H^{(2)}$ the homology classes of $\alpha_i$ and $\beta_i$, respectively.
Thus we obtain a two-step filtration on the homology group $H = H^{(1)} \supset
H^{(2)} \supset 0$. One can consider the graded quotient $\gr H = (H/H^{(2)})
\oplus H^{(2)}$ with weight $\wt(H/H^{(2)}) =1$ and $\wt(H^{(2)}) = 2$.
Any isomorphism $H \to \gr H$ whose associated graded equals 
the identity on $\gr H$ corresponds to a section of the surjection 
$i_*: H \to H_1(\bar S)$, and vice versa. In particular, 
we often fix a unique isomorphism $H \cong \gr H$
mapping $x_i $ and $y_i$ to  $\bar x_i$ and $\bar y_i$, 
$1 \leq i \leq g$, respectively, and $z_j$ to itself, $1 \leq j \leq n$. 
As was proved in \cite[Proposition 3.12]{highergenus}, we have a natural 
isomorphism of graded Hopf algebras
\begin{equation}\label{eq:grwtTH}
\grwt(\K\pi) = \widehat{T}(\gr H).
\end{equation}
\par
Here we change the definition of a group-like expansion: 
A group-like expansion is an isomorphism $\theta: \widehat{\K\pi}
\to \grwt(\K\pi)$ of complete filtered Hopf algebras whose associated 
graded equals the identity on $\grwt(\K\pi)$. 
A unique group-like expansion $\theta_{\exp}$ 
which maps $\alpha_i$ to $\exp(\bar x_i)$, 
$\beta_i$ to $\exp(\bar y_i)$, $1\leq i \leq g$, and $\gamma_j$ to 
$\exp(z_j)$, $1 \leq j \leq n$, is a typical example, which we call 
the {\it exponential} expansion associated with $\{\alpha_i, \beta_i, \gamma_j\}$. 
\par
A group-like expansion $\theta$ is {\it tangential} if we have $\theta(\gamma_j) 
= g_j \exp(z_j){g_j}\inv$ for some $g_j\in \mathrm{Gr}(\grwt(\K\pi))$,  
$1 \leq j \leq n$. The exponential expansion $\theta_{\exp}$ is tangential. 
The group-like element $g_j$ is uniquely determined 
up to right multiplication by $\exp(\lambda_jz_j)$, 
$\lambda_j \in \K$. If we fix the elements $g_j$'s, then 
one can extend the expansion to paths connecting 
two different boundary components of the surface
uniquely as in \cite[Definition 7.2]{KK16}. 
\par
The most important notion in this section is that of a special expansion.
A tangential expansion is {\it special} if $\log \theta(\gamma_0) 
= \omega$, where we define
$$
\omega : = \sum_i[\bar x_i, \bar y_i] + \sum_jz_j \in \grwt(\K\pi).
$$
Special expansions do exist for any $g, n \geq 0$, 
while the exponential one is not special.  
If $n=0$, a special expansion is called a {\it symplectic} expansion,
which was introduced by Massuyeau \cite{Mas12}. 
The discovery of the notion of a symplectic expansion by Massuyeau 
has been one of the important motives 
of the author's studies in the last decade. 
A special expansion often appears in genus $0$
\cite{AET10, HM00, Mas15}. We should call it a {\it special/symplectic}
expansion, but here we call just a special expansion for simplicity. \par
\begin{rmk}{\rm 
This definition is equivalent to that in \cite[Definition 7.2]{KK16}.
Let $\theta: \widehat{\K\pi} \overset\cong\to \grwt(\K\pi)$ be 
a special expansion. Then the composite of $\log\theta\vert_\pi: 
\pi \to \mathrm{Lie}(\grwt(\K\pi))$ and the natural projection
$\mathrm{Lie}(\grwt(\K\pi)) \to \gr H \to H^{(2)}$ defines a 
splitting $\pi \to H \to H^{(2)}$, from which we obtain a section $s$ of 
the inclusion homomorphism $i_*: H \to H_1(\bar S)$. 
The composite of $\theta$ and the isomorphism $\grwt(\K\pi) \cong \widehat{T}(H)$ induced by the section $s$ satisfies the condition ($\sharp_s$) 
in \cite[(7.3)]{KK16}. Conversely, if we are given a section $s$ of $i_*$ and 
a Magnus expansion with condition ($\sharp_s$), then the composite of 
the Magnus expansion and the isomorphism $\grwt(\K\pi) \cong \widehat{T}(H)$ induced by the section $s$ gives a special expansion in the sense of the present survey.}
\end{rmk}
\par

\subsection{The Goldman bracket}
\label{subsec:gold}

Now we recall the definition of the Goldman bracket. 
Choose generic immersions representing $\alpha$ and $\beta \in \hat\pi$.
In particular, the intersection points $\alpha\cap\beta$ are finite and transverse.
Then the Goldman bracket is defined to be
$$
[\alpha, \beta] := \sum_{p\in \alpha\cap \beta}
\varepsilon_p(\alpha, \beta) |\alpha_p\beta_p| \in \Z\hat\pi
$$
\cite{Go86}. Here $\varepsilon_p(\alpha, \beta)\in \{\pm1\}$ is the local intersection 
number of $\alpha$ and $\beta$ at the intersection point $p$, 
and $\alpha_p$ and $\beta_p \in \pi_1(S, p)$ are 
the based loops with basepoint $p$ along $\alpha$ and $\beta$, respectively.
Moreover $\alpha_p\beta_p$ means the product of the based loops, whose free homotopy 
class is denoted by $|\alpha_p\beta_p| \in \hat\pi$. 
Then the bracket is well-defined, and makes the module $\Z\hat\pi$ 
into a Lie algebra \cite{Go86}. One can check
\begin{eqnarray}
& & [|\K\pi(l)|, |\K\pi(m)|] \subset |\K\pi(l+m-2)|, \quad\text{and} \label{eq:Gfil}\\
& & \delta^f(|\K\pi(m)|) \subset \sum_{a+b = m-2} |\K\pi(a)|\otimes |\K\pi(b)|
\label{eq:Tfil}
\end{eqnarray}
\cite[Proposition 3.13]{highergenus}. Hence the Goldman bracket and 
the Turaev cobracket descend to $|\widehat{\K\pi}|$ and 
define their associated graded 
$$
\aligned
& [-,-]_{\grwt}: \grwt(|\K\pi|)\otimes \grwt(|\K\pi|) \to \grwt(|\K\pi|)\\
& \grwt(\delta^f): \grwt(|\K\pi|) \to \grwt(|\K\pi|)\otimes \grwt(|\K\pi|)\\
\endaligned
$$ 
of degree $-2$. 
It should be remarked that $\grwt(|\K\pi|) = |\grwt(\K\pi)|$. \par
To describe these operations, we prepare some notation.
The intersection number on $H = H_1(S; \K)$ induces a non-degenerate 
skew-symmetric pairing $\langle-, -\rangle: (H/H^{(2)})\times (H/H^{(2)})
\to \K$, which we extend to $\gr H$ by zero on $H^{(2)}$. 
A symmetric operation 
$\mathfrak{z}: H^{(2)}\times H^{(2)}\to H^{(2)}$
is defined by $\mathfrak{z}(z_j, z_k) = \delta_{jk}z_j$, $1 \leq j, k \leq n$.
The sum $z_0 := \sum^n_{j=1}z_j$ is the unit of the operation $\mathfrak{z}$, 
which we extend to $\gr H$ by zero on $H/H^{(2)}$. 
Then the associated graded of the Goldman bracket is given by 
\begin{eqnarray}
[u, v]_{\grwt} &=& 
\sum_{i,j}\langle u_i, v_j\rangle|u_{i+1}\cdots u_lu_1\cdots u_{i-1}
v_{j+1}\cdots v_mv_1\cdots v_{j-1}|\label{eq:Gform}\\
&& + \sum_{i,j}|\mathfrak{z}(v_j, u_i)u_{i+1}\cdots u_lu_1\cdots u_{i-1}
v_{j+1}\cdots v_mv_1\cdots v_{j-1}|\nonumber\\
&& - \sum_{i,j}|\mathfrak{z}(u_i, v_j)v_{j+1}\cdots v_mv_1\cdots v_{j-1}u_{i+1}\cdots u_lu_1\cdots u_{i-1}|.\nonumber
\end{eqnarray}
Here $u = u_1\cdots u_l$, $v = v_1\cdots v_m \in \widehat{T}(\gr H)$ 
for $u_i, v_j \in \gr H$. 
For the framed Turaev cobracket, we need one more symbol: 
For a framing $f \in F(S)$ we define $c_f: \gr H \to \K$ by 
$c_f(\bar x_i) = c_f(\bar y_i) = 0$, $1 \leq i \leq g$, and 
$c_f(z_j) = \rot_f(\partial_jS) + 1$, $1 \leq j \leq n$. 
Here we remark $z_0 = \sum^n_{j=1}z_j$ satisfies 
$c_f(z_0) = 2-2g-n-1 - \rot_f(\partial_0S) + n = 1-2g - \rot_f(\partial_0S)$
from the Poincar\'e--Hopf theorem. Then the associated graded of 
the Turaev cobracket for 
$u = u_1\cdots u_l \in \widehat{T}(\gr H)$ with $u_i \in \gr H$ is given by 
\begin{eqnarray}
\grwt(\delta^f)(|u|) &=& 
\sum_{j<k}\langle u_j, u_k\rangle |u_{j+1}\cdots u_{k-1}|\wedge 
|u_{k+1}\cdots u_lu_1\cdots u_{j-1}|\label{eq:Tform}\\
&& + \sum_{j<k} |\mathfrak{z}(u_k, u_j)u_{j+1}\cdots u_{k-1}|\wedge
|u_{k+1}\cdots u_lu_1\cdots u_{j-1}|\nonumber\\
&& + \sum_{j<k} |\mathfrak{z}(u_j, u_k)u_{k+1}\cdots u_lu_1\cdots u_{j-1}|
\wedge |u_{j+1}\cdots u_{k-1}|\nonumber\\
&& + \sum^l_{i=1}c_f(u_i)\mathbf{1}\wedge |u_{i+1}\cdots u_lu_1\cdots u_{i-1}|.
\nonumber
\end{eqnarray}
Here $P\wedge Q \in \grwt(|\K\pi|)\otimes \grwt(|\K\pi|)$ means 
$P\otimes Q - Q\otimes P \in \grwt(|\K\pi|)\otimes \grwt(|\K\pi|)$ 
for $P, Q \in \grwt(|\K\pi|)$, and $\mathbf{1} = |1| \in \grwt(|\K\pi|)
= |\grwt(\K\pi)|$. 
If $c_f = 0$, then $\grwt(\delta^f)(|u|)$ equals Schedler's cobracket
\cite{Schedler}. 
In particular, the $\mathbf{1}\wedge \grwt(|\K\pi|)$-part of 
$\grwt(\delta^f)(|u|)$ equals 
\begin{equation}\label{eq:1Tform}
\mathbf{1}\}wedge\sum_{i=1}^l\Bigl|(\langle u_i, u_{i+1}\rangle - \mathfrak{z}(u_i, u_{i+1})
+ c_f(u_i)u_{i+1})u_{i+2}\cdots u_lu_1\cdots u_{i-1}
\Bigr|,
\end{equation}
where we agree $l+1:= 1$ and $l+2 := 2$.\par
Any special expansion gives the formal description of the Goldman bracket.
\begin{thm}[\cite{KK14, KK16, MT13, MTpre, Naef}]
\label{thm:Gform}
Let $\theta$ be a special expansion for $S = \Sigma_{g, n+1}$. Then 
$\theta: (|\widehat{\K\pi}|, [-,-]) \to (\grwt(|\K\pi|), [-,-]_{\grwt})$ 
is an isomorphism of Lie algebras. 
\end{thm}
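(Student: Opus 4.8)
The plan is to reduce Theorem \ref{thm:Gform} to a normalization statement for Turaev's homotopy intersection form. First note that $\theta$ is, by definition, an isomorphism of complete filtered Hopf algebras $\widehat{\K\pi}\overset\cong\to\grwt(\K\pi)$ inducing the identity on the associated graded; applying the functor $|\cdot|$ yields a linear isomorphism $\theta\colon|\widehat{\K\pi}|\overset\cong\to|\grwt(\K\pi)|=\grwt(|\K\pi|)$. Hence the entire content of the theorem is that this specific linear isomorphism intertwines the completed Goldman bracket with $[-,-]_{\grwt}$; that the bracket extends continuously to $|\widehat{\K\pi}|$ and admits an associated graded at all is guaranteed by the filtration estimate \eqref{eq:Gfil}.

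The tool I would use is the algebraic description of the Goldman bracket on $|\widehat{\K\pi}|$ via Turaev's homotopy intersection form: a continuous bilinear pairing $\eta\colon\widehat{\K\pi}\otimes\widehat{\K\pi}\to\widehat{\K\pi}$ attached to the based surface $(S,*)$ with $*\in\partial_0 S$, satisfying biderivation-type (Fox-pairing) identities with respect to the group-ring structure, whose precise form is dictated by the topology of $S$ (in particular by the boundary loop $\gamma_0$), and from which the Goldman bracket is recovered by a universal formula involving only $\eta$ and the product of $\widehat{\K\pi}$; see \cite{KK16, MT13}. The proof then splits into two steps. Step one: transport $\eta$ along $\theta$ to a pairing $\theta_*\eta$ on $\widehat{T}(\gr H)=\grwt(\K\pi)$, and observe that the theorem is equivalent to the assertion that $\theta_*\eta$ is the standard algebraic intersection pairing on $\widehat{T}(\gr H)$ built from the intersection form $\langle-,-\rangle$ on $H/H^{(2)}$ and from the operation $\mathfrak{z}$ on $H^{(2)}$, since the Lie bracket produced by that model pairing is exactly the formula \eqref{eq:Gform} defining $[-,-]_{\grwt}$.

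Step two is the heart of the argument: the identification of $\theta_*\eta$. Since $\theta$ is a Hopf-algebra isomorphism, $\theta_*\eta$ is again a Fox pairing of the same type on $\widehat{T}(\gr H)$, hence determined by its values on the algebra generators $\bar x_i,\bar y_i,z_j$. Tangentiality of $\theta$, i.e. $\theta(\gamma_j)=g_j\exp(z_j)g_j\inv$, controls the contributions of the boundary components capped off in $\bar S$ and forces the $H^{(2)}$-part of $\theta_*\eta$ to be governed by $\mathfrak{z}$; the \emph{special} condition $\log\theta(\gamma_0)=\omega=\sum_i[\bar x_i,\bar y_i]+\sum_j z_j$ is precisely what makes the remaining ``symplectic'' part of $\theta_*\eta$ equal to the pairing coming from $\langle-,-\rangle$, with no correction term left over. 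Feeding this back, $\theta$ carries the Goldman bracket to the bracket built from the model pairing, namely to $[-,-]_{\grwt}$; and, being a linear isomorphism, $\theta$ is then an isomorphism of Lie algebras.

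The main obstacle is Step two, the normalization of $\eta$. One must (i) make precise how a Fox pairing transforms under a group-like change of expansion and verify that this operation stays within the class of pairings pinned down by their values on generators, and (ii) prove that the ``defect'' $\log\theta(\gamma_0)-\omega$ is the only obstruction to $\theta_*\eta$ being the model pairing, so that its vanishing --- the defining property of a special expansion --- suffices. This is where the geometric input genuinely enters: the surface relation $\prod_i(\alpha_i\beta_i\alpha_i\inv\beta_i\inv)\prod_j\gamma_j=\gamma_0$, the explicit behaviour of $\eta$ near $\partial_0 S$, and the compatibility of the boundary terms with the Poincar\'e--Hopf constraint underlying $c_f$. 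It does not reduce to a single routine computation, and the various sources cited in the statement handle it by somewhat different bookkeeping.
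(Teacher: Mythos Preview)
The paper does not give its own proof of Theorem~\ref{thm:Gform}; it is a survey and simply cites the result from \cite{KK14, KK16, MT13, MTpre, Naef}. So there is no in-paper argument to compare against directly.

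That said, your outline follows the Massuyeau--Turaev line \cite{MT13, MTpre}: package the Goldman bracket through the homotopy intersection form (a Fox pairing) $\eta$ on $\widehat{\K\pi}$, transport it along $\theta$, and use the special condition $\log\theta(\gamma_0)=\omega$ to identify $\theta_*\eta$ with the model pairing on $\widehat{T}(\gr H)$ built from $\langle-,-\rangle$ and $\mathfrak{z}$. This is correct in spirit and is indeed one of the proofs behind the citation. Your honest flagging of Step~two as the non-trivial part is accurate: the computation that the defect $\log\theta(\gamma_0)-\omega$ governs the discrepancy between $\theta_*\eta$ and the model pairing is the substance of \cite{MT13, MTpre} (and, in the language of $\sigma$, of \cite{KK14, KK16}). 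Note also that the other cited sources take somewhat different routes: \cite{KK14, KK16} work directly with the action $\sigma\colon|\widehat{\K\pi}|\to\der(\widehat{\K\pi})$ rather than with $\eta$, and \cite{Naef} recasts the whole picture in Kontsevich's ``Lie world''. Your sketch is a faithful summary of one of these, not a new argument, which is appropriate here.

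One small correction: the boundary/Poincar\'e--Hopf remark about $c_f$ at the end is out of place. The quantity $c_f$ enters only in the cobracket side (formula~\eqref{eq:Tform}); the Goldman bracket and its formal description \eqref{eq:Gform} are framing-independent, so no framing data should appear in the normalization of $\eta$.
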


From this theorem, any conjugate of a special expansion 
by a group-like element in $\mathrm{Gr}(\grwt(\K\pi))$ also 
gives the formal description of the Goldman bracket.
Recently Alekseev--Kawazumi--Kuno--Naef \cite{revisited} proved the converse.

\begin{thm}[\cite{revisited}]\label{thm:rev}
Let $\theta$ be a group-like expansion and assume that $\theta$ induces 
a Lie algebra isomorphism $\theta: (|\widehat{\K\pi}|, [-,-]) \to (\grwt(|\K\pi|), [-,-]_{\grwt})$. Then $\theta$ is conjugate to a special expansion 
by a group-like element in $\mathrm{Gr}(\grwt(\K\pi))$. 
\end{thm}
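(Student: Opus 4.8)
The plan is to route everything through the homotopy intersection form and to produce the conjugating element by an induction on the weight filtration. Recall that the proof of Theorem \ref{thm:Gform} actually establishes more than the displayed Lie-algebra isomorphism: a special expansion formalizes the completed homotopy intersection form $\eta\colon\widehat{\K\pi}\otimes\widehat{\K\pi}\to\widehat{\K\pi}$ of Kawazumi--Kuno \cite{KK14,KK16}, whose formal model $\grwt(\eta)$ on $\widehat{T}(\gr H)$ is an explicit contraction built from $\langle-,-\rangle$ and $\mathfrak{z}$, and the Goldman bracket is recovered as the antisymmetrization $[a,b]=|\eta(a,b)|-|\eta(b,a)|$. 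So I would argue in two steps: (a) from the hypothesis that $\theta$ formalizes $[-,-]_{\grwt}$, produce a group-like element $a\in\mathrm{Gr}(\grwt(\K\pi))$ such that $\mathrm{Ad}(a)\circ\theta$ formalizes the full $\eta$; and (b) show that any expansion formalizing $\eta$ is tangential and satisfies $\log\theta(\gamma_0)=\omega$, i.e. is special. Together these give that $\mathrm{Ad}(a)\circ\theta$ is special, which is exactly the assertion.

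For step (a), fix a special expansion $\theta_0$ (these exist for all $g,n\geq0$ as recalled above) and consider $\theta_*\eta-\grwt(\eta)$ as a Hochschild-type $1$-cochain on $\widehat{T}(\gr H)$ with values in $\widehat{T}(\gr H)$, compatible with the coproduct and the cyclic symmetry. Its $|\cdot|$-antisymmetric part vanishes by hypothesis, since that part is precisely the comparison of the two formalizations of the Goldman bracket; the remaining part, which also carries the ambiguity in the very definition of $\eta$ (the basepoint and the auxiliary paths $\ell_j$), is what one must trivialize. Working by weight, at weight $k$ the leading discrepancy is the class of a primitive-preserving degree-$k$ derivation $D_k$ of $\widehat{T}(\gr H)$ whose induced map on $\grwt(|\K\pi|)$ is a derivation of $[-,-]_{\grwt}$; the structural analysis of such derivations --- the $\sder$/$\tder$ and divergence formalism of \cite{genus0} in genus $0$, and its positive-genus refinement in \cite{highergenus} --- shows that $D_k$ is, modulo the $\eta$-definition ambiguities, inner. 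Killing the weight-$k$ term by conjugating with the exponential of a suitable Lie-like element and iterating produces an infinite composition that converges because the weight filtration is complete; the accumulated factor is an exponential of Lie-like elements, hence genuinely lies in $\mathrm{Gr}(\grwt(\K\pi))$, and tangentiality is preserved throughout.

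Step (b) is then a direct computation against $\grwt(\eta)$ on the free generators. Evaluating the formalized $\eta$ on $\gamma_j$, and using that $\gamma_j$ is a simple boundary loop so that $\eta(\gamma_j,-)$ and $\eta(-,\gamma_j)$ take the degenerate "boundary" shape dictated by $\grwt(\eta)$, forces $\theta(\gamma_j)=g_j\exp(z_j){g_j}\inv$ for some group-like $g_j$, i.e. tangentiality; evaluating along the relation $\gamma_0=\prod_i\alpha_i\beta_i{\alpha_i}\inv{\beta_i}\inv\prod_j\gamma_j$ in $\pi$ then pins $\log\theta(\gamma_0)$ down to $\sum_i[\bar x_i,\bar y_i]+\sum_jz_j=\omega$. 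These are the defining properties of a special expansion, so the argument closes.

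The genuinely hard part is the rigidity input inside step (a): that a filtered complete-Hopf-algebra automorphism of $\widehat{T}(\gr H)$ which is the identity on the associated graded and preserves $[-,-]_{\grwt}$ is, up to inner automorphisms, one that does not change the $\eta$-formalization class --- equivalently, a vanishing statement for the relevant derivation cohomology of $(\grwt(|\K\pi|),[-,-]_{\grwt})$. This is exactly where the Kashiwara--Vergne-type machinery enters, and the delicate point is to control the central $z_j$-directions and the symplectic $[\bar x_i,\bar y_i]$-directions simultaneously and uniformly in the weight. I would expect the genus-$1$ phenomena visible in Theorem \ref{thm:fr} not to obstruct \emph{this} statement --- it concerns only the bracket, where a conjugating element always exists --- but they will require some care in the bookkeeping.
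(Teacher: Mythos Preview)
Your route through the homotopy intersection form $\eta$ is not the paper's, and the place where it breaks is precisely the point you flag yourself. In step~(a) you need that any filtered Hopf-algebra automorphism of $\widehat{T}(\gr H)$ which is the identity on the associated graded and preserves $[-,-]_{\grwt}$ is inner up to the ambiguities in $\eta$; you call this a ``vanishing statement for the relevant derivation cohomology'' and defer it to ``the $\sder$/$\tder$ and divergence formalism''. But that formalism in \cite{genus0,highergenus} is set up to control the \emph{cobracket} (via $\div$, $\mathrm{tDiv}$, the cocycles $j$, $J$), not to classify derivations of the bracket. Nothing in those references gives the innerness you assert for $D_k$, and as stated your ``hard part'' is essentially a restatement of the theorem itself. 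Step~(b) is also thinner than it looks: that formalizing $\eta$ forces $\log\theta(\gamma_0)=\omega$ exactly (not just up to a central ambiguity) needs an argument you have not supplied.

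The paper's argument is both different and more direct, and does not pass through $\eta$ at all. The two ingredients it names are Lemma~\ref{lem:inn} and Theorem~\ref{thm:center}. The mechanism is: each boundary loop $\gamma_j$ (including $\gamma_0$) has all its powers central in the Goldman Lie algebra, so the hypothesis forces $|\theta(\gamma_j)^l|$ to lie in the explicitly computed center $|\K[[\omega]]|+\sum_k|\K[[z_k]]|$ for every $l\geq 1$. Since the associated graded of $\theta$ is the identity, the leading term of $\log\theta(\gamma_0)$ is $\omega$ and that of $\log\theta(\gamma_j)$ is $z_j$; the center constraints for all $l$ then feed into Lemma~\ref{lem:inn} (parts (1) and (2) for the $z_j$- and $\omega$-directions respectively) to show, weight by weight, that the higher-order discrepancy lies in $[\omega,\widehat{T}(\gr H)]$ (resp.\ $[z_j,\widehat{T}(\gr H)]$), hence can be killed by conjugation by a group-like element. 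No $\eta$, no Kashiwara--Vergne input, and no cohomology-vanishing black box: the entire rigidity is concentrated in Lemma~\ref{lem:inn}, which is an elementary statement about cyclic words. If you want to salvage your outline, replace the appeal to $\sder$/$\tder$/divergence by this center-plus-Lemma~\ref{lem:inn} argument; that is where the actual content lives.
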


Lemma \ref{lem:inn}  and Theorem \ref{thm:center} 
are the keys to the proof of this theorem. 

\begin{lem}[{\cite[Proposition A.2]{genus0}} {\cite[Proposition 3.9]{revisited}}]\label{lem:inn}
\begin{enumerate}
\item Let $V$ be a finite-dimensional $\K$-vector space. 
Assume $x \in V\setminus\{0\}$ and $a \in \widehat{T}(V)$ satisfy 
$|ax^l| = 0 \in |\widehat{T}(V)|$ for any $l \geq 1$. 
Then we have $a \in [x, \widehat{T}(V)]$.
\item Let $V$ be a finite-dimensional symplectic $\K$-vector space 
with symplectic form $\omega_0 \in \wedge^2V$. 
Assume an element $a \in \widehat{T}(V)$ satisfies 
$|a{\omega_0}^l| = 0 \in |\widehat{T}(V)|$ for any $l \geq 1$. 
Then we have $a \in [\omega_0, \widehat{T}(V)]$.
\end{enumerate}
\end{lem}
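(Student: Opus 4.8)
The plan is to prove both parts by one scheme: reduce to a fixed degree, pass to a normal form modulo the relevant commutator subspace, and detect non-vanishing of that normal form through the maps $a\mapsto|a\,\xi^{l}|$, using that the cyclic coinvariants $|\widehat{T}(V)|=\prod_{m}(V^{\otimes m})_{\Z/m}$ have a basis indexed by cyclic words. Here $\xi$ stands for $x$ (degree $1$) in part (1) and for $\omega_{0}$ (degree $2$) in part (2). Everything is homogeneous: writing $a=\sum_{d\ge0}a_{d}$ with $a_{d}\in V^{\otimes d}$, the degree-$(d+l\deg\xi)$ part of $|a\,\xi^{l}|$ is $|a_{d}\,\xi^{l}|$, so the hypothesis says $|a_{d}\,\xi^{l}|=0$ for all $d$ and all $l\ge1$; since $[\xi,-]$ raises degree by $\deg\xi$, it is enough to show $a_{d}\in[\xi,V^{\otimes(d-\deg\xi)}]$ for each $d$ (with $V^{\otimes m}=0$ for $m<0$; the components reassemble into an element of $\prod_{m}V^{\otimes m}$). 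The case $d=0$ is clear, as $|a_{0}\,\xi^{l}|=a_{0}|\xi^{l}|$ with $|\xi^{l}|\ne0$ for a suitable $l\ge1$. For $d\ge1$ two facts drive the argument: (i) cyclicity of $|\cdot|$ gives $|\xi b\,\xi^{l}|=|b\,\xi^{l+1}|=|b\xi\,\xi^{l}|$, hence $|[\xi,b]\,\xi^{l}|=0$ for all $b$ and $l\ge0$, so each map $a_{d}\mapsto|a_{d}\,\xi^{l}|$ factors through $V^{\otimes d}/[\xi,V^{\otimes(d-\deg\xi)}]$; and (ii) words in distinct cyclic-rotation classes have linearly independent images in $|\widehat{T}(V)|$. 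By (i) it therefore suffices, in each degree $d\ge1$, to exhibit a spanning set of $V^{\otimes d}/[\xi,V^{\otimes(d-\deg\xi)}]$ that some $|{-}\,\xi^{l}|$ maps to a linearly independent family.

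For part (1), extend $x=e_{1},e_{2},\dots,e_{n}$ to a basis of $V$. The relation $xw\equiv wx$ modulo $[x,V^{\otimes(d-1)}]$ moves a leading $x$ to the tail, so iterating it shows that every length-$d$ word is congruent either to $x^{d}$ or to a word not beginning with $x$; hence these ``normal words'' span $V^{\otimes d}/[x,V^{\otimes(d-1)}]$. Now take $l=d$. If $w$ is a length-$d$ word not beginning with $x$, then in $w\,x^{d}$ the terminal block of $x$'s has length $\ge d$, strictly longer than any other block of $x$'s (each of which is contained in $w$, hence of length $<d$); this maximal block is a cyclic invariant, so a cyclic equivalence $w_{1}x^{d}\sim w_{2}x^{d}$ must match the two maximal blocks, forcing the trailing $x$-counts of $w_{1},w_{2}$ to coincide and then, after aligning those blocks, $w_{1}=w_{2}$. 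Moreover $x^{2d}$ is cyclically equivalent to no $w\,x^{d}$. So by (ii) the classes $|x^{2d}|$ and the $|w\,x^{d}|$ are linearly independent. Writing $a\equiv c\,x^{d}+\sum_{w}c_{w}\,w$ in normal form, fact (i) and the hypothesis give $0=|a\,x^{d}|=c\,|x^{2d}|+\sum_{w}c_{w}|w\,x^{d}|$, so $c=0$ and all $c_{w}=0$, i.e.\ $a\in[x,V^{\otimes(d-1)}]$. This completes part (1).

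For part (2) one runs the same scheme with $\xi=\omega_{0}$, and the single new ingredient is a normal form for $V^{\otimes d}/[\omega_{0},V^{\otimes(d-2)}]$ playing the role of the normal words above: informally, one in which a leading occurrence of $\omega_{0}$ has been pushed as far to the right as possible. Since $\omega_{0}$ is not a proper power in $\widehat{T}(V)$ (being an alternating $2$-tensor, it is not of the form $v\otimes v$), Bergman's centralizer theorem identifies its centralizer with $\K[[\omega_{0}]]$, and by Cohn's weak algorithm $\widehat{T}(V)$ is free both as a left and as a right $\K[[\omega_{0}]]$-module; a homogeneous $\K[[\omega_{0}]]$-basis then provides the desired normal form degree by degree. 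One checks, as in part (1) but now with the terminal ``$\omega_{0}$-block'' of $n\,\omega_{0}^{l}$ eventually dominating all other such blocks, that for $l$ large (depending on $d$) the classes $|n\,\omega_{0}^{l}|$ of the normal forms $n$ are linearly independent; with (i) this yields $a\in[\omega_{0},V^{\otimes(d-2)}]$.

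I expect the main obstacle to be exactly this last point: because $\omega_{0}$ is not a monomial, ``a word beginning with $\omega_{0}$'' and ``the longest $\omega_{0}$-block'' are not syntactic notions and must be organized through the $\K[[\omega_{0}]]$-module structure rather than on words, which is where the real work of part (2) lies; part (1), by contrast, is entirely elementary once one reduces modulo $[x,\widehat{T}(V)]$ and uses the necklace basis of the cyclic coinvariants. (A tempting shortcut, substituting a fresh free variable $s\mapsto\omega_{0}$ and applying part (1) inside $\widehat{T}(V\oplus\K s)$, does not by itself work: the surjection $\widehat{T}(V\oplus\K s)\to\widehat{T}(V)$ does not reflect the vanishing hypothesis.) For the details I would follow \cite{genus0, revisited}.
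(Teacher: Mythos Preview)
The paper itself does not prove this lemma; it is quoted from \cite{genus0} and \cite{revisited} and used as a black box (to deduce Theorem~\ref{thm:center} and in the proof of Theorem~\ref{thm:genus0}). So there is no ``paper's own proof'' to compare against; your proposal in fact supplies strictly more than the survey does.

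Your argument for part~(1) is correct and complete. The reduction to a fixed degree, the spanning set $\{x^{d}\}\cup\{\text{words of length }d\text{ not beginning with }x\}$ for $V^{\otimes d}/[x,V^{\otimes(d-1)}]$, and the choice $l=d$ all work. The ``unique longest $x$-block'' argument is sound: since $|w|=d$ and $w$ does not begin with $x$, every $x$-block of $wx^{d}$ other than the terminal one has length $<d$, while the terminal one has length $\ge d$; two such blocks of length $\ge d$ cannot coexist in a word of length $2d$ containing a non-$x$ letter, so the terminal block is the unique maximum and pins down $w$ from the cyclic class of $wx^{d}$. Together with $|x^{2d}|$ this gives the required linear independence in $|\widehat{T}(V)|$.

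For part~(2) you give only an outline and explicitly defer the details to the same references the paper cites; that is fair, but the outline as written has a genuine gap. A homogeneous left (or right) $\K[[\omega_{0}]]$-basis $\{m_{\alpha}\}$ of $\widehat{T}(V)$ yields a $\K$-basis $\{\omega_{0}^{k}m_{\alpha}\}$ of each $V^{\otimes d}$, and modulo $[\omega_{0},-]$ one may replace these by $\{m_{\alpha}\omega_{0}^{k}\}$; but this is still a spanning set of size $(\dim V)^{d}$ for a quotient of dimension at least $(\dim V)^{d}-(\dim V)^{d-2}$, so there are nontrivial relations among your ``normal forms'' in the quotient. Consequently the map $n\mapsto|n\,\omega_{0}^{l}|$ cannot be injective on this spanning set for any $l$ (already for $d=2$, $g=1$ one has $|x^{2}\omega_{0}|=0$ while $x^{2}\ne0$ in $V^{\otimes 2}/[\omega_{0},\K]=V^{\otimes 2}$), and the analogue of your part~(1) endgame does not go through verbatim. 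The point you correctly flag --- that ``$\omega_{0}$-block'' is not a syntactic notion --- is exactly where the work lies, and Bergman's centralizer theorem together with Cohn's freeness does not by itself produce a normal form for $V^{\otimes d}/[\omega_{0},V^{\otimes(d-2)}]$. One needs a finer argument (as in \cite[Proposition~A.2]{genus0} or \cite[Proposition~3.9]{revisited}) that controls the kernel of $a\mapsto|a\,\omega_{0}^{l}|$ for varying $l$, not a single $l$; your own computation scheme already suggests this, since different homogeneous pieces are detected by different $l$.
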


\begin{thm}[\cite{CBEG07}]\label{thm:center}
The center of the Lie algebra $(\grwt(|\K\pi|), [-,-]_{\grwt})$ equals 
$$
Z(\grwt(|\K\pi|), [-,-]_{\grwt}) = |\K[[\omega]]|+ \sum^n_{j=0}
|\K[[z_j]]|.
$$
\end{thm}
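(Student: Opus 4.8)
The final statement in the excerpt is Theorem~\ref{thm:center}: the center of the Lie algebra $(\grwt(|\K\pi|), [-,-]_{\grwt})$ is exactly $|\K[[\omega]]| + \sum_{j=0}^n |\K[[z_j]]|$, where $\omega = \sum_i [\bar x_i, \bar y_i] + \sum_j z_j$.

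**Overall plan.** The proof splits into two inclusions: showing that each of the listed elements is central, and showing that nothing else is. The first (easy) direction I would handle by direct computation using the explicit formula \eqref{eq:Gform} for $[-,-]_{\grwt}$. For $|z_j^k|$ with $1 \le j \le n$: in the formula the pairing terms $\langle u_i, v_j\rangle$ vanish whenever one word is a power of $z_j$ (since $\langle -,-\rangle$ is zero on $H^{(2)}$), and the $\mathfrak{z}$-terms pair $z_j$ with a slot $u_i$; one checks that the two $\mathfrak{z}$-sums cancel against each other after the cyclic reindexing built into the formula, so $[|z_j^k|, |v|]_{\grwt} = 0$ for all $v$. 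For $|\omega^k|$: here the relevant computation is that $[\omega, u]_{\grwt} = 0$ for every word $u$; this is essentially the graded shadow of the fact that $\gamma_0$ (whose log is $\omega$ under a special expansion) is the boundary class, which is central for the Goldman bracket, so by Theorem~\ref{thm:Gform} applied to any special expansion the centrality of $|\omega^k|$ transfers from the centrality of $|\log\gamma_0|^k$ — actually cleaner: invoke that $\mathbf{1}$ and the boundary loops are Goldman-central, transport through $\theta$, and read off \eqref{eq:grwtTH}. Both sub-computations are routine index bookkeeping with the cyclic-word conventions.

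**The hard direction.** The substance is the reverse inclusion: if $|a| \in \grwt(|\K\pi|) = |\widehat T(\gr H)|$ is central, then $|a|$ lies in the claimed subspace. The natural strategy is to test centrality against a well-chosen family of elements and extract constraints from \eqref{eq:Gform}. Concretely, I would first reduce to a fixed homogeneous degree and a fixed "length" of cyclic word, then bracket $|a|$ against degree-one elements $|h|$, $h \in \gr H$, which by \eqref{eq:Gform} produces a derivation-type action (pairing terms plus $\mathfrak{z}$-terms), and demand it vanish. Splitting $\gr H = (H/H^{(2)}) \oplus H^{(2)}$: pairing against the symplectic part $H/H^{(2)}$ forces $|a|$ to be annihilated by all the "cyclic derivations" coming from the nondegenerate form, which by a cyclic-words / noncommutative-invariant-theory argument (this is exactly the content of part (2) of Lemma~\ref{lem:inn}, with $\omega_0$ the symplectic form on $H/H^{(2)}$) pins the $H/H^{(2)}$-dependence of $|a|$ down to powers of $\omega_0$; pairing against $z_j$ via the $\mathfrak{z}$-terms, together with part (1) of Lemma~\ref{lem:inn} applied with $x = z_j$, forces the $z_j$-dependence into powers of $z_j$ modulo $[z_j, -]$. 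Reconciling these — i.e. showing the only words surviving all constraints are $\K$-combinations of $\omega^k$ and $z_j^k$ — is where the real work lies; this is precisely why Lemma~\ref{lem:inn} is singled out in the excerpt as "the key." I expect the main obstacle to be the bookkeeping that interlocks the symplectic constraint and the $n$ separate $z_j$-constraints on a single cyclic word, in particular ruling out mixed monomials; cleanly isolating "leading slots" and inducting on length is the technique I would use, following \cite[Theorem 1.3]{CBEG07} (the necklace Lie algebra computation of Crawley-Boevey--Etingof--Ginzburg) whose result this theorem essentially quotes.

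**Summary of the step order.** First, verify centrality of $|z_j^k|$ and $|\omega^k|$ directly from \eqref{eq:Gform} (or by transport through a special expansion using Theorem~\ref{thm:Gform} and the Goldman-centrality of boundary loops and $\mathbf{1}$). Second, for a putative central $|a|$, reduce to homogeneous degree; bracket against $H/H^{(2)}$-elements and apply Lemma~\ref{lem:inn}(2) to control the symplectic variables; bracket against each $z_j$ and apply Lemma~\ref{lem:inn}(1) to control $z_j$. Third — the crux — combine the constraints to conclude $|a| \in |\K[[\omega]]| + \sum_j |\K[[z_j]]|$, appealing to \cite{CBEG07} for the necklace-Lie-algebra center computation that underlies this bookkeeping.
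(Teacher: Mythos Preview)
Your plan matches the paper's: the paper gives no detailed proof but remarks that the theorem ``can be proved in an elementary way using Lemma~\ref{lem:inn} as in \cite[Theorem 5.4]{highergenus}'', and you correctly identify that lemma as the engine and split into the two inclusions.

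One correction to your sketch of the hard direction: Lemma~\ref{lem:inn} is not the statement that vanishing cyclic derivatives force $|a|$ to be a power of $\omega_0$ or of $z_j$. Its hypothesis is $|a\omega_0^l| = 0$ (resp.\ $|a z_j^l| = 0$) for all $l \ge 1$, and its conclusion is that the lift $a$ lies in $[\omega_0, \widehat T(V)]$ (resp.\ $[z_j, \widehat T(V)]$) --- an innerness statement about derivations, not a statement that cyclic words reduce to powers. Accordingly, bracketing a central $|a|$ against single letters $|h|$ does not produce the lemma's hypotheses. The route actually taken (and visible in the proof of Theorem~\ref{thm:genus0} later in the paper) is: centrality of $|a|$ gives $|\grwt(\sigma)(|a|)(w)| = 0$ for all $w$; one then chooses $w$ in families built from powers of $\omega$ and of the $z_j$ to manufacture the hypotheses of Lemma~\ref{lem:inn} for the components of the derivation $\grwt(\hat\sigma)(|a|)$, forcing it to be inner. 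The remaining step --- identifying what in $|A|$ lies over inner derivations --- is where the claimed subspace $|\K[[\omega]]| + \sum_j |\K[[z_j]]|$ finally appears. Your deferral to \cite{CBEG07} and \cite{highergenus} for this combinatorics is appropriate, but the mechanism by which you invoke Lemma~\ref{lem:inn} should go through powers and innerness rather than single letters and cyclic derivatives.
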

This theorem can be proved in an elementary way using Lemma \ref{lem:inn} as in 
\cite[Theorem 5.4]{highergenus}.

\begin{rmk}\label{rmk:log}{\rm Let $S$ be the surface $\Sigma_{g,1}$ and $\theta$  a group-like expansion for $\pi_1(S)$.
Then, for any based loop $\gamma$, $\theta(\log\gamma) \in \mathrm{Lie}(\widehat{T}(H))$ 
is a Lie-like element. Since $|\mathrm{Lie}(\widehat{T}(H))| = |H|$, we have $|\theta(\log\gamma)| 
= |[\gamma]| \in |H|$, where $[\gamma] \in H$ is the homology class represented
by the loop $\gamma$. Hence, for any free loop $\alpha$, the isomorphism $|\theta|: 
|\K\pi|\to |\widehat{T}(H)|$ maps
$|\log\alpha | := 
\sum^\infty_{m=1}\frac{(-1)^{m+1}}{m}|(\alpha-1)^m| \in |\widehat{\K\pi}|$ 
to $|[\alpha]| \in |H| \subset |\widehat{T}(H)|$. 
}\end{rmk}
\par
In the succeeding sections, we need an action of the Goldman Lie algebra 
$|\Z\pi|$ on the group ring $\Z\pi$ of the fundamental group $\pi 
=\pi_1(S, *)$. 
A free loop $\alpha \in \hat\pi$ acts on a based loop $\gamma \in \pi$ 
as follows. Take generic immersions as their representatives.
Then the intersection points $\alpha\cap\gamma$ are finite, and 
we define $\sigma(\alpha)(\gamma) \in \Z\pi$ by 
$$
\sigma(\alpha)(\gamma) := \sum_{p\in \alpha\cap\gamma}
\varepsilon_p(\alpha, \gamma) \gamma_{*p}\alpha_p\gamma_{p*}
\in \Z\pi.
$$
Here $\varepsilon_p(\alpha, \gamma)\in \{\pm1\}$ is the local intersection 
number of $\alpha$ and $\gamma$ at the intersection point $p$, 
$\gamma_{*p}$ and $\gamma_{p*}$ are the segments of $\gamma$ 
from $*$ to $p$ and from $p$ to $*$, respectively, and
$\alpha_p \in \pi_1(S, p)$ the based loop along $\alpha$ with basepoint $p$.
Then it is well-defined, and defines a Lie algebra homomorphism 
$\Z\hat\pi/\Z\mathbf{1} \to \der(\Z\pi)$, where $\der(\Z\pi)$ 
is the Lie algebra of derivations of the group ring $\Z\pi$ \cite{KK14}.
We remark that the Goldman bracket of $\alpha \in \hat\pi$ and 
the free  loop $|\gamma| \in \hat\pi \subset |\Z\pi|$ is given by 
$[\alpha, |\gamma|] = |\sigma(\alpha)(\gamma)|$. The homomorphism
$\sigma$ is continuous with respect to the filtration $\{|(I\pi)^m|\} = \{|\K\pi(m)|\}$, and so 
descends to the Lie algebra homomorphism
\begin{equation}\label{eq:sigmag}
\sigma: |\widehat{\K\pi}|/\K\mathbf{1} \to \der(\widehat{\K\pi}).
\end{equation}
The right-hand side is the Lie algebra of {\it continuous} derivations of $\widehat{\K\pi}$. 
The associated graded of the homomorphism $\grwt(\sigma)$ is 
given by 
\begin{eqnarray}
\grwt(\sigma)(u)(v) &=& 
\sum_{i,j}\langle u_i, v_j\rangle
v_1\cdots v_{j-1}u_{i+1}\cdots u_lu_1\cdots u_{i-1}
v_{j+1}\cdots v_m\label{eq:Sform}\\
&& + \sum_{i,j}v_1\cdots v_{j-1}\mathfrak{z}(v_j, u_i)u_{i+1}\cdots u_lu_1\cdots u_{i-1}v_{j+1}\cdots v_m\nonumber\\
&& - \sum_{i,j}
v_1\cdots v_{j-1}u_{i+1}\cdots u_lu_1\cdots u_{i-1}\mathfrak{z}(u_i, v_j)v_{j+1}\cdots v_m.\nonumber
\end{eqnarray}
Here $u = u_1\cdots u_l$, $v = v_1\cdots v_m \in \widehat{T}(\gr H)$ 
for $u_i, v_j \in \gr H$ as above.
As is proved in \cite{KK14, KK16, MT13, MTpre, Naef}, any special expansion 
induces a formal description of the homomorphism $\sigma$.\par
A free loop $\alpha$ also acts on the free $\Z$-module over 
the homotopy set of paths $[([0,1], 0, 1), (S, *_j, *_k)]$, $0 \leq j, k \leq n$.
We denote the action by  
$\sigma: |\Z\pi|/\Z\mathbf{1} \to \mathrm{End}(\Z[([0,1], 0, 1), (S, *_j, *_k)])$. 
Using the path $\ell_j$ in \S\ref{subsec:exp}, we define $\sigma_j(\alpha) \in \Z\pi$
by $\sigma_j(\alpha) = (\sigma(\alpha)(\ell_j)){\ell_j}\inv \in \Z\pi$. 
Then we have $\sigma(\alpha)({\ell_j}\inv) = - {\ell_j}\inv \sigma_j(\alpha)$ and so $\sigma_j([\alpha, \beta]) 
= \sigma(\alpha)(\sigma_j(\beta)) 
-  \sigma(\beta)(\sigma_j(\alpha)) 
+  \sigma_j(\alpha)\sigma_j(\beta)
-  \sigma_j(\beta)\sigma_j(\alpha)
$
for any $\alpha, \beta \in \hat\pi$. Hence the homomorphism $\sigma$ 
lifts to a homomorphism to the semi-direct product
$$
\hat\sigma:= (\sigma, \sigma_1, \dots, \sigma_n): 
|\Z\pi|/\Z\mathbf{1} \to \der(\Z\pi)\ltimes (\Z\pi)^{\oplus n},
$$
which also has a formal description by any special expansion 
\cite{KK16}. The associated graded of the homomorphism
$\hat\sigma$ 
$$
\aligned
\grwt(\hat\sigma) = 
& (\grwt(\sigma), \grwt(\sigma_1), \dots, \grwt(\sigma_n)): \\
& \grwt(|\widehat{\K\pi}|) \to \der(\grwt(\K\pi))\ltimes (\grwt(\K\pi))^{\oplus n}
\endaligned
$$
has its image in a subalgebra $\tDer$ defined by 
$$
\tDer := \{(u, u_1, \dots, u_n) \in 
\der(\grwt(\K\pi))\ltimes (\grwt(\K\pi))^{\oplus n}; \,\,
u(z_j) = [z_j, u_j], 1 \leq j \leq n\}.
$$
\par
The homotopy intersection form introduced independently 
by Papakyriakopoulos \cite{Papa} and Turaev \cite{Tu78} is an upgrade of the homomorphism $\sigma$.
Massuyeau and Turaev \cite{MT13, MTpre} gave an explicit 
tensorial description of the homotopy intersection form by any special expansion.
%


\subsection{The logarithms of mapping classes}
\label{subsec:log}

In this subsection we confine ourselves to the case $S = \Sigma_{g,1}$.
See \cite{KK15, KK16} for general compact surfaces $S$. 
Instead of $\gamma_0$, 
we denote by $\zeta \in \pi = \pi_1(S, *)$ the negative boundary loop 
with basepoint $* \in \partial S$. 
Then the image $\sigma(|\Z\pi|/\Z\mathbf{1})$ is included in 
$\der_\zeta(\Z\pi)$, where $\der_\zeta(\Z\pi)$ 
is the Lie algebra of derivations of the group ring $\Z\pi$ annihilating 
the boundary loop $\zeta \in \pi$. Hence we have 
the Lie algebra homomorphism
\begin{equation}\label{eq:sigma}
\sigma: |\widehat{\K\pi}|/\K\mathbf{1} \to \der_\zeta(\widehat{\K\pi}).
\end{equation}
The right-hand side is the Lie subalgebra of $\der(\widehat{\K\pi})$ 
annihilating the boundary loop $\zeta$. Using the formal descriptions
of the Goldman bracket and the homomorphism $\sigma$, 
one can prove the homomorphism $\sigma$ in 
\eqref{eq:sigma} is an isomorphism of Lie algebras. For details, see \cite{KK16}.
We denote by $|\widehat{\K\pi}|^+$ the closure of $|(I\pi)^3|$
in $|\widehat{\K\pi}|$. 
From \eqref{eq:Gfil}, $|\widehat{\K\pi}|^+$ is a pro-nilpotent
Lie subalgebra of  $|\widehat{\K\pi}|$. 
In particular, the Baker--Campbell--Hausdorff series defines a group structure 
on the subalgebra $|\widehat{\K\pi}|^+$
\par
The mapping class group $\mathcal{M}(S)$, 
the path-component group of the group of 
orientation-preserving diffeomorphisms 
fixing the boundary $\partial S$ pointwise, 
acts on $\widehat{\K\pi}$ in an obvious way. 
We consider the subset 
$$
\mathcal{M}^\circ := \left\{\varphi \in \mathcal{M}(S); \,\,
\text{the logarithm $\log\varphi
:= \sum^\infty_{m=1}\frac{(-1)^{m+1}}{m}(\varphi-1)^m$ on $\widehat{\K\pi}$ exists}\right\}.
$$
It is a proper subset of $\mathcal{M}(S)$, since any mapping class
whose action on the homology group $H$ is semi-simple 
can not be there.
But it contains all the Dehn twists and 
the Johnson group $\mathcal{K}_{g,1} := \ker(\mathcal{M}(\Sigma_{g,1})
\to \aut(\K\pi/(I\pi)^3))$. 
Then we can define a map 
$$
\tau: \mathcal{M}^\circ
\overset{\log}\to \der_\zeta(\widehat{\K\pi}) \overset{\sigma\inv}\to 
|\widehat{\K\pi}|/\K \mathbf{1}, \quad
\varphi \mapsto \sigma\inv(\log\varphi),
$$
which is a geometric extension of the Johnson homomorphisms \cite{Joh83}.
As was proved in \cite{KK14}, we have $\tau(t_C) = L(C) := \frac12(\log C)^2 
\in |\widehat{\K\pi}|/\K \mathbf{1}$ for any simple closed curve $C \subset \IntS$.
This formula is generalized to any compact 
oriented surface by \cite{KKpre} and \cite{MT13}. In particular, 
$\tau(t_C)$ is an infinite sequence of the powers $(C-1)^m$ of $C-1$, $m \geq 0$.
On the other hand, we have 
$\tau(\mathcal{K}_{g,1}) \subset |\widehat{\K\pi}|^+/\K\mathbf{1}$, and 
$\tau: \mathcal{K}_{g,1} \to |\widehat{\K\pi}|^+/\K\mathbf{1}$ is a group homomorphism.
For details, see \cite{KK16}. \par
\begin{rmk}{\rm 
Following Turaev's quantization of the Goldman--Turaev Lie bialgebra \cite{Tu91}, 
S.\ Tsuji `skein-ized' the logarithms of mapping classes in \cite{Tsu1, Tsu2, Tsu4}. 
His constructions yield invariants of integral homology $3$-spheres
\cite{Tsu3, Tsu4}.
Very recently he proved that the invariants recover the
universal quantum $sl(n)$ invariants \cite{Tsu5}.
}
\end{rmk}


Now we review a relation between the map $\tau$ and the Turaev cobracket. 
In \cite{KK15} it is proved that $\delta\circ\tau = 0: \mathcal{M}^\circ \to 
(|\widehat{\K\pi}|/\K\mathbf{1})^{\otimes 2}$ by using a based variant 
$\mu$ of the 
Turaev cobracket $\mu$ \cite{Tu78}. 
Similarly one can prove $\delta^f\tau(\varphi) = 0$
for any $\varphi \in \mathcal{M}^\circ$ satisfying $f\varphi = f$. 
We remark that any $\varphi \in \mathcal{K}_{g,1}$ acts on the 
set $F(S)$ trivially. 
Here we give an alternative proof for $\delta^f\tau(\mathcal{K}_{g,1}) = 0$. 
\par
\begin{lem}\label{lem:vanishdlt} 
For any simple closed curve $C$ in $M = 
\IntS$, we have 
$$
\delta^f L(C) = (\rot_f C)(\mathbf{1} \otimes \log C - \log C\otimes \mathbf{1})
\in |\widehat{\K\pi}|^{\otimes 2}.
$$
\end{lem}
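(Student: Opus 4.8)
The plan is to compute $\delta^f L(C)$ directly from the definition of $L(C) = \tfrac12(\log C)^2$ together with the cocycle property of $\delta^f$ (Lemma~\ref{lem:cocycle}) and the vanishing result $\delta(C^m)=0$ of Lemma~\ref{lem:iterate}. First I would observe that since $|(\log C)| = |[C]|$ by Remark~\ref{rmk:log}, and more to the point $\log C = \sum_{m\geq 1}\frac{(-1)^{m+1}}{m}(C-1)^m$ lies in the completed span of the powers $C^m-1$, the element $L(C)$ lies in the completed span of the $|(C-1)^k|$, i.e.\ of the $|C^m|$. So it suffices to know the value of $\delta^f$ on each $|C^m|$.

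The key input is that $\delta^f|C^m|$ should be computed from a representative of rotation number zero: by Lemma~\ref{lem:iterate} the \emph{unframed} $\delta(C^m)=0$, using the spiral representative with $m-1$ self-intersections, and that spiral representative has a definite rotation number, namely $\rot_f(C^m) = m\cdot\rot_f(C)$ once we fix an orientation convention (a monogon-free spiral around $C$ has rotation number $\rot_f$ equal to $m$ times that of the core curve). Hence by the defining formula $\delta^f\alpha = \delta\alpha + (\rot_f\alpha)(\mathbf{1}\otimes\alpha - \alpha\otimes\mathbf{1})$ applied to $\alpha = C^m$ (or rather to the conjugacy class $|C^m|$), we get
$$
\delta^f|C^m| = m\,(\rot_f C)\bigl(\mathbf{1}\otimes |C^m| - |C^m|\otimes\mathbf{1}\bigr).
$$
Now I would extend this linearly and continuously. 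Writing $\log C = \sum_m a_m (C^m-1)$ for the appropriate coefficients $a_m$, and using $\delta^f\mathbf{1}=0$, one computes $\delta^f$ of $\tfrac12(\log C)^2$. Here one must be slightly careful: $\delta^f$ is a cobracket, not a derivation, so I cannot naively apply it termwise to a product. The cleaner route is to first compute $\delta^f|\log C|$ — but $L(C)$ is not $|\log C|$. The right approach is to use the compatibility/cocycle identity together with the fact that $\log C$ is central-like: alternatively, note that $\tfrac12(\log C)^2$, as an element of $|\widehat{\K\pi}|$, again lies in the completed span of the $|C^m|$ (since any product of powers of $C$ is a power of $C$, and $|C^j C^k| = |C^{j+k}|$), so I can read off its expansion $L(C) = \sum_m b_m |C^m|$ directly and then apply the formula for $\delta^f|C^m|$ above term by term. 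This gives $\delta^f L(C) = (\rot_f C)\,\mathbf{1}\wedge \sum_m m\,b_m|C^m|$, and it remains to identify $\sum_m m\, b_m|C^m|$ with $|\log C|$, i.e.\ with $\log C$ modulo commutators — equivalently to check $\sum_m m\,b_m t^m = t\frac{d}{dt}\bigl(\tfrac12(\log(1+ (e^t-1)))^2\bigr)$-type identity at the level of the generating function in one variable $C$, which reduces to the scalar identity $\sum_m m b_m z^m = \log(1+z)$ after the substitution making $\log C \leftrightarrow z$; this is just the statement that multiplying the $m$-th coefficient by $m$ corresponds to applying $z\frac{d}{dz}$ to $\tfrac12(\log(1+z))^2$, giving $\log(1+z)$, i.e.\ $\log C$.

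The main obstacle I expect is the bookkeeping of the rotation number of the spiral representative and the sign/orientation conventions: one needs $\rot_f(C^m) = m\,\rot_f(C)$ to hold for the \emph{particular} monogon-free spiral used in Lemma~\ref{lem:iterate}, and one must be sure that this spiral is indeed the representative computing $\delta^f$ (i.e.\ that inserting $m\,\rot_f(C)$ monogons into it kills its rotation number). This is essentially the content of the additivity of $\rot_f$ under the "parallel copies" operation and is straightforward geometrically, but it is the place where the constant $\rot_f C$ (rather than, say, $\rot_f C$ with a shift) genuinely enters, so it deserves care. Everything else is formal manipulation in $|\widehat{\K\pi}|$, continuity of $\delta^f$ with respect to the weight filtration (already recorded in \eqref{eq:Tfil}), and the one-variable generating-function identity above.
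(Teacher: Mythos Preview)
Your proposal is correct and follows essentially the same route as the paper: compute $\delta^f(C^m) = m(\rot_f C)(\mathbf{1}\otimes C^m - C^m\otimes \mathbf{1})$ from Lemma~\ref{lem:iterate} together with $\rot_f(C^m)=m\,\rot_f(C)$, extend by linearity and continuity to any power series $h(C)$ in $C-1$, and finish with the one-variable identity $z\tfrac{d}{dz}\bigl(\tfrac12(\log z)^2\bigr)=\log z$. The paper packages the middle step as the formula $\delta^f(h(C)) = (\rot_f C)\bigl(\mathbf{1}\otimes Ch'(C) - Ch'(C)\otimes\mathbf{1}\bigr)$; your detours through Lemma~\ref{lem:cocycle} and Remark~\ref{rmk:log} are unnecessary, and the worry that $\delta^f$ is ``not a derivation'' is moot since you only ever apply it to elements of $|\widehat{\K\pi}|$, not to products.
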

\begin{proof} From Lemma \ref{lem:iterate}, we have 
$$
\delta^f(C^m) = \delta(C^m) + \rot_f(C^m)(\mathbf{1}\otimes C^m-C^m\otimes \mathbf{1})
= m\cdot\rot_f(C)(\mathbf{1}\otimes C^m-C^m\otimes \mathbf{1})
$$
for any $m \geq 0$. Hence, for any formal power series $h(z)$ in a variable $z-1$, 
we have 
$$
\delta^f(h(C)) = \rot_f(C)(\mathbf{1}\otimes Cf'(C)-Cf'(C)\otimes \mathbf{1}).
$$
The lemma follows from the fact that $z\frac{d}{dz}(\frac12(\log z)^2) = \log z$. 
\end{proof}

As was stated in Remark \ref{rmk:log}, the logarithm $\log C = \sum^\infty_{m=1}\frac{(-1)^{m+1}}{m} | (C-1)^m| \in |\widehat{\K\pi}|$ depends only on the homology class $[C] \in H_1(M; \K)$.
In particular, we have $\delta^f L(C) = 0$ if $[C] = 0 \in H_1(M; \K)$. 
\begin{thm}[\cite{KK15}]\label{thm:delta}
$(\delta^f\circ \tau)(\mathcal{K}_{g,1}) = 0$.
\end{thm}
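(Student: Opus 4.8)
The plan is to reduce the statement to two ingredients: a computation of $\delta^f\tau$ on a convenient generating set of the Johnson group, and the observation that the vanishing locus of $\delta^f$ is a subgroup for the Baker--Campbell--Hausdorff product. Recall from \S\ref{subsec:log} that $\tau\colon\mathcal{K}_{g,1}\to|\widehat{\K\pi}|^+/\K\mathbf{1}$ is a group homomorphism, where the target is given the group law defined by the $\bch$ series. Hence it suffices to produce a $\bch$-subgroup of $|\widehat{\K\pi}|^+/\K\mathbf{1}$ that is annihilated by $\delta^f$ and contains $\tau(\varphi)$ for every $\varphi$ in some generating set of $\mathcal{K}_{g,1}$. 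For the generating set I would take the Dehn twists $t_C$ along separating simple closed curves $C\subset\IntS$: by Johnson's theorem (together with Mess's analysis in genus $2$, the case $g\le 1$ being essentially trivial), these generate $\mathcal{K}_{g,1}$. This is the single external input of the argument.

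The computation on generators is immediate from the preceding lemma. If $C\subset\IntS$ is a separating simple closed curve, then $[C]=0\in H_1(\IntS;\K)$, so Lemma~\ref{lem:vanishdlt} together with the remark following it gives $\delta^f L(C)=0$. Since $\tau(t_C)=L(C)=\tfrac12(\log C)^2$ (as recalled in \S\ref{subsec:log}) and $t_C\in\mathcal{M}^\circ\cap\mathcal{K}_{g,1}$, we get $\delta^f\tau(t_C)=0$ for every such $C$, and $\delta^f\tau(t_C^{-1})=-\delta^f\tau(t_C)=0$ as well by linearity of $\delta^f$.

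Next I would verify that $\mathcal{Z}:=\{x\in|\widehat{\K\pi}|^+/\K\mathbf{1}\,;\,\delta^f x=0\}$ is a closed Lie subalgebra, hence a $\bch$-subgroup. It is a closed linear subspace since $\delta^f$ is $\K$-linear and continuous for the weight filtration (using \eqref{eq:Tfil}). It is stable under the bracket by the cocycle identity of Lemma~\ref{lem:cocycle}, extended bilinearly and continuously to $|\widehat{\K\pi}|$: if $\delta^f a=\delta^f b=0$ then $\delta^f[a,b]=\ad(a)(\delta^f b)-\ad(b)(\delta^f a)=0$. A closed Lie subalgebra of the pro-nilpotent Lie algebra $|\widehat{\K\pi}|^+/\K\mathbf{1}$ is stable under the $\bch$ product and under inversion, so $\mathcal{Z}$ is a subgroup. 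Combining this with the fact that $\tau$ is a homomorphism and that the generators $t_C^{\pm1}$ of $\mathcal{K}_{g,1}$ all map into $\mathcal{Z}$, we conclude $\tau(\mathcal{K}_{g,1})\subseteq\mathcal{Z}$, i.e. $\delta^f\tau(\varphi)=0$ for every $\varphi\in\mathcal{K}_{g,1}$.

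The only genuine obstacle is the appeal to the generation of $\mathcal{K}_{g,1}$ by separating twists; everything else is formal, resting on the cocycle property of $\delta^f$ (Lemma~\ref{lem:cocycle}) and on the elementary Lemma~\ref{lem:vanishdlt}. If one wished to avoid the generation theorem, one could instead try to argue directly that $\delta^f\circ\tau$ vanishes by analyzing how $\delta^f$ interacts with $\bch$ on arbitrary elements, but the subgroup argument above is cleaner and makes the role of the separating-twist computation transparent.
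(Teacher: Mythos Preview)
Your proposal is correct and matches the paper's own proof essentially step for step: both invoke Johnson's theorem that $\mathcal{K}_{g,1}$ is generated by Dehn twists along null-homologous (equivalently, separating) simple closed curves, apply Lemma~\ref{lem:vanishdlt} to see $\delta^f\tau(t_C)=0$ on these generators, use the cocycle identity of Lemma~\ref{lem:cocycle} to conclude that $\ker\delta^f$ is a Lie subalgebra, and finish with the fact that $\tau$ is a group homomorphism. Your write-up is somewhat more explicit about why a closed Lie subalgebra is a $\bch$-subgroup, but the argument is the same.
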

\begin{proof} As was proved by Johnson \cite{J85}, the group $\mathcal{K}_{g,1}$ 
is generated by Dehn twists along simple closed curves 
whose homology classes vanish. The value of the composite $\delta^f\circ \tau$ vanishes at any of such 
Dehn twists by Lemma \ref{lem:vanishdlt}. 
Lemma \ref{lem:cocycle} implies that the kernel $\ker\delta^f$ is 
a Lie subalgebra. 
The theorem follows from the fact that 
$\tau$ is a group homomorphism.
\end{proof}
\par
Here we briefly review the Johnson homomorphisms \cite{Joh83}.
Let $\{\Gamma_k\pi\}_{k=1}^\infty$ be the lower central series of 
the fundamental group $\pi = \pi_1(S, *)$. This series is defined by 
$\Gamma_1\pi = \pi$ and $\Gamma_{k+1}\pi = [\Gamma_k\pi, \pi]$
for $k \geq 1$. The Andreadakis--Johnson filtration $\{\mathcal{M}(k)\}_{k=0}^\infty$ 
of the mapping class group $\mathcal{M}(S) = \mathcal{M}(\Sigma_{g,1})$ 
is defined by $\mathcal{M}(k) := \ker(\mathcal{M}(S) \to \aut(\pi/\Gamma_{k+1}\pi))$ for $k \geq 0$. We have $\mathcal{M}(0) =  \mathcal{M}(S)$, 
$\mathcal{M}(2) = \mathcal{K}_{g,1}$ and $\mathcal{M}(1)$ is called 
the Torelli group. One can check that $\tau(\mathcal{M}(k))$ is included 
in the closure of $|(I\pi)^{k+2}|$ in $|\widehat{\K\pi}|/\K \mathbf{1}$ for any $k \geq 1$.
Hence we obtain the associated graded
$$
\gr(\tau): \prod^\infty_{k=1} \mathcal{M}(k)/\mathcal{M}(k+1) 
\to \grwt(|\K\pi|),
$$
which equals the Johnson homomorphisms \cite{KK16}, and is a Lie algebra homomorphism \cite{MoAQ}. From the facts that $\delta\circ\tau = 0$
and $\delta^f\circ\tau = 0$, we can 
derive some constraints on the image of the Johnson homomorphism $\gr(\tau)$. 
As was computed in \cite[\S6.4]{KK15}, the original Turaev cobracket 
$\delta$ includes the Morita trace \cite{MoAQ}.
But, as was proved by Enomoto--Kuno--Satoh \cite{EKS18}, 
the constraint coming from the original $\gr(\delta)$ 
does not include the Enomoto--Satoh trace \cite{ES}, a refinement of the Morita trace.
The Enomoto--Satoh trace equals 
the $\mathbf{1}\wedge \grwt(|\K\pi|)$-part of 
$\grwt(\delta^f)(u)$ \eqref{eq:1Tform} for $S = \Sigma_{g,1}$
\cite{Ka15}.
Thus we need to consider the formality problem of the framed Turaev cobracket 
$\delta^f$. Moreover, if $g = 0$ and $c_f = 0$, 
then the $\mathbf{1}\wedge \grwt(|\K\pi|)$-part equals 
minus the Alekseev--Torossian divergence cocycle $\div$ in the Kashiwara--Vergne 
problem \cite{AN17}. This leads us to study the Kashiwara--Vergne problem.
\par

\section{Kashiwara--Vergne problems and the formality of the Turaev cobracket}
\label{sec:form}

As will be shown in this section, 
the Kashiwara--Vergne problem can be regarded
as a problem on a group-like expansion for 
a framed surface $(\Sigma_{g, n+1}, f)$. 
But, in the formulation in \cite{AT12}, 
it is a problem for a (continuous) automorphism
of the Lie algebra $\mathrm{Lie}(\grwt(\K\pi))$
with some auxiliary data. \par
We write simply 
$A := \grwt(\K\pi)$ and
$L := \mathrm{Lie}(\grwt(\K\pi))$.
The Lie algebra $L$ 
has a natural filtration coming from the weight filtration
and a group structure by the Baker--Campbell--Hausdorff series, 
and we denote by $\aut^+(L)$
the group of filtration-preserving automorphisms
of $L$ whose associated graded is the identity. 
Then one can introduce a subgroup $\taut = \taut^\gn$ 
of the semi-direct product of $\aut^+(L)$ and $L^{\oplus n}$
defined by 
$$
\taut := \{(F, f_1, \cdots, f_n) \in \aut^+(L)\ltimes L^{\oplus n}; \,\,
F(z_j) = \exp(-f_j)z_j\exp(f_j),\,\, 1 \leq j \leq n\}.
$$
We denote by $\overline{\taut}$ the image of the $\taut$ under the first 
projection $\taut \hookrightarrow \aut^+(L)\ltimes L^{\oplus n} \overset{\mathrm{pr}_1}\to \aut^+(L)$. \par
The Lie algebra of the group $\aut^+(L)$ is the Lie algebra 
$\der^+(L)$ of 
derivations of $L$ of positive degree with respect to the weight filtration, 
and the Lie algebra of $\taut$ coincides with a Lie subalgebra 
$\tder^+$ of 
the semi-direct product of $\der^+$ and $L^{\oplus n}$ given by 
$$
\tder^+ = \{(u, u_1, \cdots, u_n) \in \der^+(L)\ltimes L^{\oplus n}; \,\,
u(z_j) = [z_j, u_j],\,\, 1 \leq j \leq n\},
$$
which can be regarded as a Lie subalgebra of $\tDer$ in an obvious way.
In the sequel we write simply $u$ for $(u, u_1, \cdots, u_n) \in \tder^+$
for simplicity.
Here we remark that the exponential $\exp: \tder^+ \to \taut$ is a bijection, 
and that any graded Lie algebra action of $\tder^+$ 
on a graded module $M$ integrates to a group action of the group 
$\taut$ on $M$. Moreover any graded Lie algebra $1$-cocycle 
$c: \tder^+ \to M$ integrates to a group cocycle $C: \taut \to M$ 
satisfying the condition $\frac{d}{dt}C(\exp(tu))\vert_{t=0} = c(u)$
for any $u \in \tder^+$. The group cocycle $C$ is explicitly written as
\begin{equation}\label{eq:integ}
C(\exp(u)) = \frac{e^u-1}{u}\cdot c(u) 
\left(= \sum^\infty_{m=0}\frac{1}{(m+1)!}u^m (c(u))\right)
\end{equation}
for any $u \in \tder^+$. 
\par
Now we recall the free generator system 
$\{\alpha_i, \beta_i, (1 \leq i \leq g), \,\, \gamma_j (1 \leq j \leq n)\}$
of the group $\pi = \pi_1(S, *)$, and the exponential expansion 
$\theta_{\exp}: \widehat{\K\pi} \to \grwt(\K\pi)$ associated with the 
generator system introduced in \S\ref{subsec:exp}. Then any tangential expansion $\theta$ equals 
$F\inv\circ\theta_{\exp}$ for some unique automorphism $F \in \overline{\taut}$.
In other words, the set of tangential expansions is a torsor of 
the group $\overline{\taut}$. 
From the definition of a special expansion, 
$F\inv\circ\theta_{\exp}$ is special if and only if $F$ satisfies the condition
$$
F\left(\sum^g_{i=1}[\bar x_i, \bar y_i]+
\sum^n_{j=1}z_j\right) = \log\left(\prod^g_{i=1}(e^{\bar x_i}e^{\bar y_i}e^{-\bar x_i}e^{-\bar y_i})
\prod^n_{j=1}e^{z_j}\right),
\leqno{\rm{(KVI)}}
$$
which is exactly the first condition of the Kashiwara--Vergne problem.
Hence we may assume that an automorphism $F \in \taut$ satisfies the condition (KVI).
In this section, under the assumption, 
we will provide a necessary and sufficient condition for $F$ 
to induce the formality of the Turaev cobracket.
This is exactly the same as the other condition of the Kashiwara--Vergne problem, that is, (KVII).

\subsection{Formality in genus $0$}
\label{subsec:zero}

A formal description of the framed cobracket $\delta^f$ in genus $0$ 
was given first by Massuyeau \cite{Mas15} by using the Kontsevich integral.
After the work \cite{genus0} we review here, 
Alekseev and Naef \cite{AN17} gave it by using 
the Knizhnik--Zamolodchikov connections. 
This seems the simplest proof of the formality of the Goldman--Turaev 
Lie bialgebra in genus $0$. \par
Now we recall the Kashiwara--Vergne problem \cite{KV78}
in the formulation of Alekseev--Torossian \cite{AT12}.
This corresponds to the genus $0$ surface $\Sigma_{0,n+1}$, $n\geq 2$, with the unique framing $f$ satisfying $\rot_f(\partial_jS) = -1$
for any $j \geq 1$, which is realized by a standard embedding of 
$\Sigma_{0, n+1}$ into the unit disk in $\R^2$ 
mapping $\partial_0S$ to the unit circle.
Then we have $c_f = 0$, $H = H^{(2)}$, and so $\grwt(\K\pi) = \widehat{T}(H)$.
In particular, $L = \mathrm{Lie}(\grwt(\K\pi))$ equals 
the complete free Lie algebra generated by $H = \oplus^n_{j=1}\K z_j$. 
Any element $a \in A = \widehat{T}(H)$ is uniquely written as $a = a^0 + 
\sum^n_{i=1}a^iz_i$ with $a^0 \in \K$ and $a^i \in A$. 
Then the Alekseev--Torossian divergence cocycle $\div: \tder^+ \to |A|
= |\widehat{T}(H)|$ is defined by
$$
\div(u) := \sum^n_{i=1}|z_i (u_i)^i| \in |A|
$$
for $u = (u, u_1, \dots, u_n) \in \tder^+$ \cite{AT12}.
We remark the divergence $\div$ naturally extends to the Lie algebra $\tDer$.
The Jacobian cocycle $j: \taut \to |A|$ is the integration of 
the divergence cocycle $\div$ by the construction \eqref{eq:integ}. 
\begin{dfn}[\cite{AT12}]\label{dfn:KV0}
An element $F \in \taut$ is a solution to the Kashiwara--Vergne problem
(of type $(0, n+1)$) if it satisfies the conditions {\rm (KVI)} and 
$$
j(F\inv) = \bigl|\sum^n_{j=1}h_j(z_j) - h\left(\sum^n_{i=1}z_i\right) \bigr| 
\leqno{\rm{(KVII)}}
$$
for some $h(s)$ and $h_j(s) \in \K[[s]]$, $1 \leq j \leq n$.
\end{dfn}
From Theorem \ref{thm:center} the condition (KVII) is equivalent to 
the condition 
$$
j(F\inv) \in Z(\grwt(|\K\pi|), [-,-]_{\grwt}).
\leqno{\rm{(KVII')}}
$$
The functions $h_j$ and $h$ are the same modulo their linear parts
\cite[Theorem 8.7]{genus0} and called the {\it Duflo function}. 
The existence of solutions to the problem is proved by 
Alekseev--Meinrenken \cite{AM06} and Alekseev--Torossian \cite{AT12}. 
\par
The main theorem in this subsection is
\begin{thm}[\cite{genus0}]\label{thm:genus0}
Let $F \in \taut$ satisfy the condition {\rm (KVI)}. 
Then the special expansion $F\inv\circ\theta_{\exp}$ 
induces the formal description of the framed Turaev cobracket, 
or equivalently, induces an isomorphism of Lie bialgebras
$$
F\inv\circ\theta_{\exp}: (|\widehat{\K\pi}|, [-,-], \delta^f) \to (\grwt(|\K\pi|), [-,-]_{\grwt}, 
\grwt(\delta^f)),
$$
if and only if $F$ is a solution to the Kashiwara--Vergne 
problem. 
\end{thm}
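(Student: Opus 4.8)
The plan is to reduce the statement to the cobracket alone. By Theorem~\ref{thm:Gform} every special expansion already intertwines the Goldman brackets, and condition (KVI) is precisely the requirement that $\theta:=F\inv\circ\theta_{\exp}$ be special; so ``$\theta$ induces the formal description of $\delta^f$'' is equivalent to the single assertion that $\theta^{\otimes 2}\circ\delta^f\circ\theta\inv$ coincides with $\grwt(\delta^f)$ on $\grwt(|\K\pi|)=|\widehat{T}(H)|$. Set $d:=\theta^{\otimes 2}\circ\delta^f\circ\theta\inv-\grwt(\delta^f)$. Since $\theta$ is a Lie algebra isomorphism (Theorem~\ref{thm:Gform}) and both $\delta^f$ and $\grwt(\delta^f)$ are $1$-cocycles for the adjoint action of the Goldman bracket (Lemma~\ref{lem:cocycle}, together with formula \eqref{eq:Tform} for the graded one), the difference $d$ is a $1$-cocycle of $(\grwt(|\K\pi|),[-,-]_{\grwt})$ valued in $\grwt(|\K\pi|)^{\otimes 2}$; the theorem thus amounts to proving that, under (KVI), one has $d=0$ if and only if (KVII) holds.

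The heart of the argument is the explicit computation of $d$, carried out by comparing $\theta$ with the reference expansion $\theta_{\exp}$. One pushes the two contributions to $\delta^f$ — the sum over double points and the framing correction $(\rot_f\alpha)(\mathbf 1\otimes\alpha-\alpha\otimes\mathbf 1)$ — through the non-commutative change of variables $F\inv$. A convenient way to organize this is through the homotopy intersection form of Papakyriakopoulos and Turaev, whose tensorial description by a special expansion was obtained by Massuyeau--Turaev: the framed cobracket is recovered from that form by a divergence-type operation, so the part of $d$ coming from the self-intersection sum reproduces $\grwt(\delta^f)$, while the remaining discrepancy is the \emph{non-commutative divergence} introduced by the change of variables. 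Using (KVI) to dispose of the residual Duflo-type terms (which are central, hence harmless), one finds that $d$ equals the Lie algebra coboundary $x\mapsto\ad(x)\bigl(1\wedge j(F\inv)\bigr)$ of the Jacobian $j(F\inv)\in|\widehat{T}(H)|$, up to a central element of $\grwt(|\K\pi|)^{\otimes 2}$. At the infinitesimal level this is the statement that the Alekseev--Torossian divergence $\div$ is the relevant obstruction $1$-cocycle, integrated via \eqref{eq:integ}; and it is exactly here that the identification of the reduced, $1\wedge(\cdot)$-part of $\grwt(\delta^f)$ in \eqref{eq:1Tform} with (minus) the divergence cocycle makes $j(F\inv)$ visible.

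It then remains to decide when this coboundary vanishes. Since $\ad(x)\bigl(1\wedge j(F\inv)\bigr)=1\wedge[x,j(F\inv)]_{\grwt}$, we get $d=0$ if and only if $j(F\inv)$ is annihilated by every $\ad(x)$, i.e.\ lies in the center of the Goldman Lie algebra $(\grwt(|\K\pi|),[-,-]_{\grwt})$. By Theorem~\ref{thm:center} this center equals $|\K[[\omega]]|+\sum_{j=0}^{n}|\K[[z_j]]|$, and, since $\omega=\sum_j z_j$ in genus $0$, this membership is precisely condition (KVII$'$), equivalent to (KVII) as recorded in the excerpt. Combining with (KVI), the special expansion $\theta=F\inv\circ\theta_{\exp}$ is an isomorphism of Lie bialgebras if and only if $F$ satisfies both (KVI) and (KVII), that is, if and only if $F$ is a solution of the Kashiwara--Vergne problem; note that both implications of the theorem come out of the same identity $d=\ad(\,\cdot\,)\bigl(1\wedge j(F\inv)\bigr)$.

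The step I expect to be the main obstacle is the explicit computation in the middle paragraph: showing that, after the group-like change of variables, the self-intersection terms and the rotation-number correction reassemble precisely into the Alekseev--Torossian divergence cocycle. This is the genuinely new input — the algebraic incarnation of the Kashiwara--Vergne ``Jacobian'' — and it is the point at which (KVI) must be invoked in order to absorb the error terms into the center; the cohomological rigidity used in the first and last paragraphs is, by contrast, a fairly formal matter, resting on Lemma~\ref{lem:inn} via Theorem~\ref{thm:center}.
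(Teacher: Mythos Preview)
Your overall architecture is the paper's own: reduce to the cobracket via Theorem~\ref{thm:Gform}, express the defect $d=\delta^f_\theta-\grwt(\delta^f)$ as a coboundary governed by the Jacobian $j(F\inv)$, and decide when it vanishes using the center computation of Theorem~\ref{thm:center}. So the strategy is right. Two points, however, need correcting.

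First, the precise form of $d$. The paper does \emph{not} obtain $d(x)=\ad(x)\bigl(1\wedge j(F\inv)\bigr)$; what emerges from the factorization $\delta^f_{\exp}=\mathrm{tDiv}\circ\hat\sigma_{\exp}$ (Theorem~\ref{thm:exp}) together with the transformation law $F^*\mathrm{tDiv}=\mathrm{tDiv}+\partial J(F\inv)$ and $J=\tilde\Delta\circ j$ is
\[
d \;=\; \bigl(\partial\,\tilde\Delta\, j(F\inv)\bigr)\circ\grwt(\hat\sigma),
\]
i.e.\ $d(x)=\ad(x)\bigl(\tilde\Delta\, j(F\inv)\bigr)$ with $\tilde\Delta=(\mathrm{id}\otimes\text{antipode})\circ\Delta$. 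Your replacement of $\tilde\Delta\, j(F\inv)$ by $1\wedge j(F\inv)$ is only correct on the primitive part; $j(F\inv)$ is not primitive in general, and the qualifier ``up to a central element'' does not bridge this gap. You should also name Theorem~\ref{thm:exp} (and the formality of $\hat\sigma$, $F^*\hat\sigma_{\exp}=\grwt(\hat\sigma)$) explicitly: this factorization is exactly the ``explicit computation'' you flag as the main obstacle, and it is what makes the argument go.

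Second, and more seriously, your converse is too quick once the correct formula is in place. From $d=0$ you only get $\ad(x)\bigl(\tilde\Delta\, j(F\inv)\bigr)=0$ for all $x$, which does \emph{not} immediately say $j(F\inv)$ is central. The paper extracts the $\mathbf{1}\otimes|A|$-component to obtain $[\,j(F\inv),\,|w|\,]_{\grwt}=0$ for all $w$, and then invokes Lemma~\ref{lem:inn} (not merely Theorem~\ref{thm:center}) to conclude that the derivation $\grwt(\sigma)(j(F\inv))$ is inner, hence $j(F\inv)$ is central. With your simplified formula the converse looks like a tautology; with the correct one it needs this extra step. Incorporate the $\tilde\Delta$ and the $\mathbf{1}\otimes|A|$-projection explicitly and the argument will match the paper's.
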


Here recall the Lie algebra homomorphism
$\grwt(\hat\sigma) = (\grwt(\sigma), \grwt(\sigma_1), \dots, \grwt(\sigma_n)): 
\grwt|\widehat{\K\pi}| \to \tDer$ introduced in \ref{subsec:gold}. 
In the present case $S= \Sigma_{0,n+1}$, for any element $u = z_{k_1}\cdots z_{k_m} \in A$, $1 \leq k_l \leq n$,
we have 
$\grwt(\sigma_j)(|u|) = \sum_l \delta_{jk_l}z_{k_{l+1}}\cdots z_{k_m}z_{k_1}\cdots z_{k_{l-1}}$, and so 
the $\mathbf{1}\wedge \grwt(|\K\pi|)$-part of 
$\grwt(\delta^f)(|u|)$ equals $-\mathbf{1}\wedge \div(\grwt(\sigma)(|u|))$. 
A ``double version" of this observation is one of the main ingredients
in our approach 
to the formality of the Turaev cobracket. So we introduce 
the {\it double divergence} $\mathrm{tDiv}: \tDer \to |A|\otimes |A|$ 
which maps $(u, u_1, \dots, u_n) \in \tDer$ to 
$$
\mathrm{tDiv}(u) := \sum_{j=1}^n\bigl|(z_j\otimes 1)(\frac{\partial}{\partial z_j}u_j)
- (\frac{\partial}{\partial z_j}u_j)(1\otimes z_j)\bigr|.
$$
Here $|\cdot|: A\otimes A 
\to |A|\otimes |A|$, $a_1\otimes a_2 \mapsto |a_1|\otimes |a_2|$, 
is the quotient map, and $\frac{\partial}{\partial z_j}: A \to A\otimes A$ 
is defined by $\frac{\partial}{\partial z_j}(z_{k_1}\cdots z_{k_m}) 
= \sum_l\delta_{jk_l} z_{k_1}\cdots z_{k_{l-1}}\otimes z_{k_{l+1}}\cdots z_{k_m}$,
$1 \leq k_l\leq n$. The divergences $\div$ and $\mathrm{tDiv}$ 
satisfiy the commutative diagram
\begin{equation}\label{eq:CD}
\begin{CD}
\tder^+ @>\div>> |A|\\
@V{\text{incl}}VV @V{\tilde\Delta}VV\\
\tDer @>{\mathrm{tDiv}}>> |A|\otimes|A|,
\end{CD}
\end{equation}
where $\tilde\Delta = (1\otimes \mathrm{antipode})\circ \Delta:
|A| \to |A|\otimes |A|$. We remark $\tder^+$ in the diagram 
\eqref{eq:CD} cannot be replaced by $\tDer$.
The cocycle $\mathrm{tDiv}: \tder^+ \to |A|\otimes |A|$ is integrated 
to a group cocycle $J: \taut \to |A|\otimes|A|$. 
The diagram \eqref{eq:CD} implies $J = \tilde\Delta\circ j: 
\taut \to |A| \to |A|\otimes |A|$. 
\par
Now we take the exponential expansion $\theta_{\exp}: 
\widehat{\K\pi} \overset\cong\to \grwt{\K\pi} = A$, and 
the conjugates of
$\delta^f_{\exp}: |A| 
\to |A| \otimes |A|$ 
and $\hat\sigma_{\exp}: |A| \to \tDer$
by the isomorphism $\theta_{\exp}$.
Then we have the following.
\begin{thm}\label{thm:exp}
$$
\delta^f_{\exp} = \mathrm{tDiv}\circ \hat\sigma_{\exp}: |A| \to |A|\otimes |A|.
$$
\end{thm}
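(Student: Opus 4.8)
\emph{Strategy.} The plan is to prove the identity first at the ``based'' level, i.e.\ as a statement about operations on the group ring $\K\pi$ rather than on its cyclic quotient $|\K\pi|$, and then to transport everything through the exponential expansion $\theta_{\exp}$, using crucially that $\theta_{\exp}(\gamma_j)=e^{z_j}$ is the most canonical possible choice on each boundary generator. In particular I would \emph{not} try to use the closed formula \eqref{eq:Tform} for $\grwt(\delta^f)$, which is adapted to a special expansion whereas $\theta_{\exp}$ is not special; instead I would compute $\delta^f_{\exp}$ directly from the geometric definition of the Turaev cobracket in $\Sigma_{0,n+1}$.

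\emph{The based-level identity.} Represent a free loop $\alpha$ by a generic immersion and choose a based loop $\tilde\alpha\in\pi$ in its conjugacy class meeting neighbourhoods of the punctures of $\Sigma_{0,n+1}$ in a controlled way. Each self-intersection point of $\alpha$ contributing to $\delta\alpha$ is, after basing, an intersection of (a translate of) $\tilde\alpha$ with itself; counting these with signs is exactly what the homotopy intersection form of Papakyriakopoulos and Turaev \cite{Papa, Tu78}, equivalently the operator $\sigma$ together with its boundary refinements $\sigma_j$, records, and an explicit tensorial form of this count is available from \cite{MT13, MTpre}. In this way one obtains a formula expressing $\delta^f\alpha\in|\widehat{\K\pi}|^{\otimes2}$ as an alternating sum over $j=1,\dots,n$ of terms built from the components of $\hat\sigma(\alpha)=(\sigma(\alpha),\sigma_1(\alpha),\dots,\sigma_n(\alpha))$ and their Fox derivatives $\pa/\pa\gamma_j$, with one tensor factor multiplied by $\gamma_j-1$ on the appropriate side; since $c_f=0$ here (the framing has $\rot_f(\partial_jS)=-1$ for all $j$), the rotation-number correction $(\rot_f\alpha)(\mathbf1\otimes\alpha-\alpha\otimes\mathbf1)$ built into $\delta^f$ is already produced by the $\sigma_j$-contributions and needs no separate bookkeeping.

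\emph{Transport by $\theta_{\exp}$ and the main obstacle.} Applying $|\theta_{\exp}|^{\otimes2}$ to the based-level formula turns it into a statement in $|A|\otimes|A|$ about $\hat\sigma_{\exp}(|u|)\in\tDer$, whose components are precisely the elements $u_j$ occurring in the definition of $\mathrm{tDiv}$. The Fox-derivative operators $\pa/\pa\gamma_j$ on $\K\pi$ do not transport to the operators $\frac{\pa}{\pa z_j}$ on $\widehat{T}(H)$ on the nose, but to $\frac{\pa}{\pa z_j}$ followed by multiplication by a factor of the form $(e^{z_j}-1)/z_j$, coming from $\theta_{\exp}(\gamma_j)-1=e^{z_j}-1$; the heart of the computation is that, after passing to the cyclic quotient $|\cdot|$, these factors cancel exactly against the multiplications by $z_j$ placed on the left and on the right in the definition of $\mathrm{tDiv}$, leaving precisely $\mathrm{tDiv}(\hat\sigma_{\exp}(|u|))$. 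The main obstacle is thus twofold: pinning down the based-level formula with all signs and the correct placement of the $\gamma_j-1$ factors, and then carrying out this ``$e^{z_j}$ versus $z_j$'' cancellation cleanly. A useful consistency check is the $\mathbf1\wedge$-part: by the commutative square \eqref{eq:CD} it must reduce to $-\mathbf1\wedge\div(\sigma_{\exp}(|u|))$, which is the already-known identity between the $\mathbf1\wedge$-part of the framed cobracket and the divergence cocycle noted in this subsection. Once the identity is verified for $u$ a monomial it holds on all of $|A|$ by linearity and continuity.
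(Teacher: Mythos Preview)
The paper does not actually give a proof of this theorem; it only says that the proof is ``straightforward and long'' and uses the homotopy intersection form \cite{Papa, Tu78, MT13} in the framework of van den Bergh's non-commutative geometry \cite{vdB}, with the reformulation of the intersection form in that language due to Massuyeau--Turaev \cite{MT14}. It also mentions an alternative direct topological proof \cite{KKN} via a double version of Turaev's gate derivatives.

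Your outline is in the same spirit: you too intend to compute $\delta^f$ through the based operator $\hat\sigma$ and the homotopy intersection form, then transport by $\theta_{\exp}$. The main difference is one of organizing language. The operator $\frac{\partial}{\partial z_j}: A\to A\otimes A$ entering $\mathrm{tDiv}$ is a \emph{double} derivation, not the one-sided Fox derivative, and the place where your sketch is thinnest is exactly the passage from the group-ring Fox calculus to this two-sided object. Your sentence ``these factors cancel exactly against the multiplications by $z_j$ placed on the left and on the right'' is the whole content of the theorem, and it is asserted rather than argued; this is precisely the ``long'' computation the paper alludes to, and the reason the paper packages it in the van den Bergh double-bracket formalism rather than in ordinary Fox calculus. (That the correction term $\mathbf{r}=\sum_i|r(x_i)+r(y_i)|$ in the higher-genus statement involves only $x_i,y_i$ and not $z_j$ is consistent with your expectation that the $\gamma_j$-contributions match $\mathrm{tDiv}$ on the nose, but it is not itself a proof.) If you want to carry your approach through, you should either adopt the double-bracket setting of \cite{vdB, MT14}, or set up explicitly the two-sided analogue of the Fox derivative on $\K\pi$ and track both tensor factors through $\theta_{\exp}$; a single-sided cancellation argument will not suffice.
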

The theorem is the key to the proof of the formality of the framed
Turaev cobracket. Its proof is straightforward and long, and involves 
the homotopy intersection form \cite{Papa, Tu78, MT13}
in the framework of non-commutative geometry in the sense of 
van den Bergh \cite{vdB}. 
It was Massuyeau and Turaev \cite{MT14} who first 
reformulated the homotopy intersection form 
in the framework of van den Bergh.
Very recently, Alekseev, Kuno, Naef and the author \cite{double} obtained 
a direct topological proof of Theorem \ref{thm:exp}
by using a double version of Turaev's gate derivatives
\cite{Tu19, Tu19b}.  
As a corollary of Theorem \ref{thm:exp} we have 
$\grwt(\delta^f) = \mathrm{tDiv}\circ \grwt(\hat\sigma)$. 
\par
\begin{proof}[Proof of Theorem \ref{thm:genus0}]
Let $F$ be an element of $\taut$ such that $\theta := F\inv\circ\theta_{\exp}$ 
is a special expansion. The conjugate $F^*\delta^f_{\exp}$ by $F$ equals
the induced cobracket $\delta^f_\theta$ on $|A| $ 
by the isomorphism $\theta$. 
Then we have $F^*\delta^f_{\exp} = (F^*\mathrm{tDiv})\circ
(F^*\hat\sigma_{\exp})$. Any special expansion induces 
the formal description of $\hat\sigma$, i.e., 
$F^*\hat\sigma_{\exp} = \grwt(\hat\sigma)$. On the other hand,
if we denote the Lie algebra coboundary operator by $\partial$, i.e., 
$(\partial w)(u) = u\cdot w$ for 
$u \in \tDer$ and 
$w \in |A|\otimes |A|$, 
one can prove
$F^*\mathrm{tDiv}(u) = \mathrm{tDiv}(u) + u\cdot J(F\inv)
= (\mathrm{tDiv} + \partial J(F\inv))(u)
= (\mathrm{tDiv} + \partial\tilde\Delta j(F\inv))(u)
$
\cite[Proposition 4.9]{highergenus}. 
Hence we have 
$$
\delta^f_\theta = F^*\delta^f_{\exp} =  \grwt{\delta^f} + 
(\partial \tilde\Delta j(F\inv))\circ\grwt(\hat\sigma).
$$
In particular, if $j(F\inv)$ is in the center of the bracket
$[-,-]_{\grwt}$, we obtain $\delta^f_\theta = \grwt{\delta^f}$.
Conversely assume $\delta^f_\theta = \grwt{\delta^f}$.
Looking at the $\mathbf{1}\otimes |A|$-part, we find out that 
$|\grwt(\sigma)(j(F\inv))(w)| 
= [j(F\inv), |w|]_{\grwt} = 0$ for any $w \in A$ of degree $\geq 2$. 
From Lemma \ref{lem:inn}, the derivation $\grwt(\sigma)(j(F\inv))$ 
is inner, and so $j(F\inv) \in |A|$ is in the center of the bracket
$[-,-]_{\grwt}$. This proves Theorem \ref{thm:genus0}.
\end{proof}

\subsection{Higher genus Kashiwara--Vergne problems}
\label{subsec:HKV}

Theorem \ref{thm:genus0} provides a necessary and sufficient condition 
for a special expansion to induce the formal description of the Turaev 
cobracket in genus $0$. This leads us to generalize the Kashiwara--Vergne problem 
to positive genus surfaces: If an element $F \in \taut^\gn$ satisfies
the condition (KVI), and the expansion $F\inv\circ\theta_{\exp}$ 
induces the formal description of the Turaev cobracket, 
then it should be a solution to the higher genus Kashiwara--Vergne problem.
\par
For this purpose, we need to modify the divergence. 
In fact, the topological nature of the $\alpha_i$'s and $\beta_i$'s 
are quite different from that of $\gamma_j$'s, 
which brings us the function 
$$
r(s) := \log((e^s-1)/s).
$$ 
Moreover we consider all possible framings on the surface $S = \Sigma_{g, n+1}$.
Recall that we have fixed the generating system $\{\alpha_i, \beta_i, \gamma_j\}$
of the fundamental group $\pi = \pi_1(S, *)$.
There is a unique framing $f^{\mathrm{adp}}$ on $S$ 
such that $\rot_{f^{\mathrm{adp}}}(|\alpha_i|) = 
\rot_{f^{\mathrm{adp}}}(|\beta_i|) = 0$ for any $1 \leq i \leq g$ 
and $\rot_{f^{\mathrm{adp}}}(\partial_jS) = -1$ for any 
$1 \leq j \leq n$. For any framing $f$ on $S$, there exists a unique
$\chi \in H^1(S; \Z)$ such that $\rot_f(\alpha) 
= \rot_{f^{\mathrm{adp}}}(\alpha)
+ \langle \chi, [\alpha]\rangle$ for any immersed loop $\alpha$. 
We introduce $q \in \hom(H^{(2)}, \Z)$ and $p \in H/H^{(2)} 
\subset \gr H$ by $q := c_f\vert_{H^{(2)}} = \chi\vert_{H^{(2)}}$
and $\langle p, \cdot\rangle = \chi-c_f : H/H^{(2)} \to \Z$. 
Here we remark that the intersection number on $H/H^{(2)}$ 
is non-degenerate. 
The map $c_q: \tDer \to |A|$, $(u, u_1, \dots, u_n) 
\mapsto \sum_{j=1}^n q(z_j)|u_j|$ restricts to a cocycle 
on $\tder^+$, and integrates to 
a group cocycle $C_q: \taut\to |A|$.
In this situation the double divergence 
$\mathrm{tDiv}: \tDer \to |A|\otimes|A|$ maps
$(u, u_1, \dots, u_n)$ to 
$$
\bigl|\sum_i(\frac{\partial}{\partial x_i}u(x_i) + 
\frac{\partial}{\partial y_i}u(y_i)) + 
\sum^n_{j=1} ((z_j\otimes 1)(\frac{\partial}{\partial z_j}u_j)
- (\frac{\partial}{\partial z_j}u_j)(1\otimes z_j)
\bigr|.
$$
This is a cocycle and integrates to a group cocycle $J: \taut 
\to |A|\otimes |A|$. In a similar way to the genus $0$ case, 
there exists a unique group 
cocycle $j: \taut \to |A|$ such that $J = \tilde\Delta \circ j$. 
We denote $j_q := j - C_q: \taut \to |A|$, and 
$\mathbf{r} := \sum_i|r(x_i) + r(y_i)|$ for $r(s) = \log((e^s-1)/s)$.

\begin{dfn}[\cite{announce, highergenus}]\label{dfn:KV+}
An element $F \in \taut$ is a solution to the Kashiwara--Vergne 
for the framed surface $(\Sigma_{g, n+1}, f)$ if it satisfies the condition (KVI)
and the second condition 
$$
j_q(F) = \mathbf{r} + |p| + \sum^n_{j=1}|h_j(z_j)| 
- |h(\xi)|
\leqno{\rm{(KVII)}}
$$
for some $h(s)$ and $h_j(s) \in \K[[s]]$, $1 \leq j\leq n$.
Here $\xi := \log\left(\prod^g_{i=1}(e^{x_i}e^{y_i}e^{-x_i}e^{-y_i})
\prod^n_{j=1}e^{z_j}\right) \in L \subset A$ is the right-hand side of the condition {\rm(KVI)}. 
\end{dfn}

In the case $g >0$ we do not know if there is a non-trivial relation 
among the functions $h$ and $h_j$'s or not. 
By Theorem \ref{thm:center}, 
the second condition is equivalent to the condition
$$
j_q(F\inv) + F\inv(\mathbf{r}+|p|) \in  Z(\grwt(|\K\pi|), [-,-]_{\grwt}).
\leqno{\rm{(KVII')}}
$$
\par
With a few exceptions there exist solutions to the problem.
\begin{thm}[{\cite[Theorem 6.1]{highergenus}}]\label{thm:exist}
\begin{enumerate}
\item If $g \geq 2$, then the Kashiwara--Vergne problem has a solution 
for any framing.
\item Assume $g = 1$. Then the Kashiwara--Vergne problem 
has a solution if and only if $q\neq 0$ or $q=p=0$. 
\end{enumerate}
\end{thm}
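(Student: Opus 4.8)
The plan is to reformulate the assertion in terms of special expansions and then to test, weight by weight, whether the obstruction to a solution vanishes; the only delicate weight turns out to be weight~$1$. By the positive-genus analogue of Theorem~\ref{thm:genus0} proved in \cite{highergenus}, an element $F\in\taut$ satisfying (KVI) is a Kashiwara--Vergne solution for $(\Sigma_{g,n+1},f)$ precisely when $F\inv\circ\theta_{\exp}$ induces the formal description of $\delta^f$; by Theorem~\ref{thm:center} this is in turn equivalent to (KVII'), i.e.\ to $j_q(F\inv)+F\inv(\mathbf r+|p|)$ lying in the center $|\K[[\omega]]|+\sum_{j=0}^n|\K[[z_j]]|$. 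Since special expansions exist for all $g,n\ge 0$ and the set of $F$ satisfying (KVI) is a torsor under $\taut$, existence of a solution amounts to deciding whether the corresponding $\taut$-coset of values of $j_q(F\inv)+F\inv(\mathbf r+|p|)$ meets the center. I would filter everything by the weight filtration of \S\ref{subsec:exp} and examine this coset in each weight.

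For the positive assertions — all framings when $g\ge 2$, and the cases $q\ne 0$ or $q=p=0$ when $g=1$ — I would use one of two routes. The cleaner is Hain's Hodge-theoretic argument \cite{Hain18}: the completed framed Goldman--Turaev Lie bialgebra of $(\Sigma_{g,n+1},f)$ underlies a mixed Hodge structure whose weight filtration is the one above and for which $\delta^f$ is a morphism, so a splitting of the weight filtration provides a special expansion intertwining $\delta^f$ with $\grwt(\delta^f)$; by the first paragraph this is a Kashiwara--Vergne solution. Alternatively one argues directly: by Theorem~\ref{thm:Gform} a special expansion formalizing the Goldman bracket exists, and, exactly as in the proof of Theorem~\ref{thm:genus0}, the failure of $F\inv\theta_{\exp}$ to also formalize $\delta^f$ is measured by $(\pa\widetilde\Delta j_q(F\inv))\circ\grwt(\hat\sigma)$ together with the $\mathbf r$- and $p$-corrections of \cite{highergenus}. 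One then corrects $F$ inside the (KVI)-torsor weight by weight, killing the obstruction in weights $\ge 2$ by reduction to the genus-$0$ Kashiwara--Vergne problem — which has solutions by Alekseev--Meinrenken and Alekseev--Torossian — together with the additional room in $\der^+(L)$ afforded by the generators $\bar x_i,\bar y_i$; the weight-$1$ obstruction is handled by the next paragraph.

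The decisive point is weight~$1$. Let $u$ denote the weight-$1$ part of $\log F$. Using the integration formula \eqref{eq:integ} and $J=\widetilde\Delta\circ j$, and the injectivity of $\widetilde\Delta$ on primitives, the weight-$1$ part of $j(F)$ is determined by $\bigl|\sum_i(\tfrac{\pa}{\pa x_i}u(x_i)+\tfrac{\pa}{\pa y_i}u(y_i))\bigr|$, the $z_j$-terms of $\mathrm{tDiv}$ contributing nothing in weight~$1$; for $g=1$ this equals $\mu\,|x_1|-\lambda\,|y_1|$, where $\lambda$ and $\mu$ are the coefficients of $[\bar x_1,\bar y_1]$ in $u(x_1)$ and in $u(y_1)$. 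Now the weight-$3$ part of (KVI), whose relevant component is the weight-$3$ term of $\log(e^{x_1}e^{y_1}e^{-x_1}e^{-y_1}\cdots)$, namely $\tfrac12[x_1+y_1,[x_1,y_1]]$, forces $\lambda=-\tfrac12$ and $\mu=\tfrac12$, so the weight-$1$ part of $j(F)$ equals $\tfrac12(|x_1|+|y_1|)$ for every $F$ satisfying (KVI), which coincides with the weight-$1$ part of $\mathbf r$. Since (KVII) reads $j_q(F)=\mathbf r+|p|+\sum_j|h_j(z_j)|-|h(\xi)|$ with $j_q=j-C_q$, and $h_j(z_j)$, $h(\xi)$ contribute nothing in weight~$1$, its weight-$1$ part says $\tfrac12(|x_1|+|y_1|)=[C_q(F)]_1+\tfrac12(|x_1|+|y_1|)+|p|$, i.e.\ $[C_q(F)]_1=-|p|$. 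When $q=0$ one has $C_q=0$, so $|p|=0$, i.e.\ there is no solution for $g=1$, $q=0\ne p$. When $q\ne 0$ one has $[C_q(F)]_1=\sum_j q(z_j)|u_j|$ with the tangential data $u_j$ essentially unconstrained by (KVI), so the equation can be solved; and when $g\ge 2$ the weight-$3$ part of (KVI) leaves the off-diagonal components of $u$ free, so the weight-$1$ part of $j(F)$ is no longer rigid and can be matched with $\tfrac12\sum_i(|x_i|+|y_i|)+|p|$. Thus the weight-$1$ obstruction vanishes in exactly the asserted cases.

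I expect the hard part to be the claim used in the second paragraph that no obstruction survives in weights $\ge 2$. Making this precise requires either the full strength of Hain's mixed-Hodge-structure construction, or a careful reduction of the positive-genus problem to the genus-$0$ Kashiwara--Vergne theorem — for instance by cutting along a non-separating simple closed curve, which lowers the genus at the cost of two boundary components — combined with control of the Lie-algebra cohomology of $\tder^+$ with coefficients in $|A|\otimes|A|$. By contrast the weight-$1$ analysis above is elementary, and it is precisely what isolates the genus-$1$ exception.
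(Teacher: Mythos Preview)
Your weight-$1$ obstruction argument is correct and is exactly how the non-existence direction in genus~$1$ goes: the weight-$3$ component of (KVI) forces the weight-$1$ part of $j(F)$ to be $\tfrac12(|x_1|+|y_1|)$, so that (KVII) in weight~$1$ reduces to $[C_q(F)]_1=-|p|$; when $q=0$ this has no solution unless $p=0$. Your parallel remarks for $g\ge2$ and for $g=1$, $q\ne0$ correctly show that the weight-$1$ obstruction disappears in those cases.

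The gap is in existence, and here your strategy also diverges from the paper's. The survey (sketching \cite{highergenus}) does not argue by obstruction theory: solutions are \emph{constructed} by gluing solutions on subsurfaces of lower genus, and the base case $g=1$ follows Enriquez' construction of elliptic associators \cite{Enriquez}. That elliptic-associator input is the essential ingredient in genus~$1$, and your sketch does not invoke it. Your first alternative, Hain's Hodge-theoretic splitting \cite{Hain18}, does yield an independent existence proof and the paper acknowledges it as such, but it uses Theorem~\ref{thm:genus+} as input and is logically downstream of, not internal to, \cite{highergenus}. Your second alternative---weight-by-weight correction reducing to the genus-$0$ problem---is not carried out; as you yourself concede, proving that no obstruction survives in weights $\ge2$ is the hard part, and the gluing construction in \cite{highergenus} bypasses this difficulty entirely rather than confronting it.
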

We construct solutions by gluing solutions in low genera.
In particular, our construction in the case $g=1$ follows 
Enriquez' construction of elliptic associators \cite{Enriquez}.
\par
Recently Hain \cite{Hain18} proved that the framed cobracket 
is a morphism of mixed Hodge structures. Combining his result
and Theorems \ref{thm:genus0} and \ref{thm:genus+}, one can 
deduce the existence of solutions to the Kashiwara--Vergne 
problem except in the genus $1$ case with $q=0$ and $p\neq 0$
\cite{Hain18}.
\par

The problem is related to the formality of the Turaev cobracket,
as desired.
\begin{thm}[\cite{highergenus}]\label{thm:genus+}
Let $F \in \taut$ satisfy the condition (KVI). 
Then the special expansion $F\inv\circ\theta_{\exp}$ 
induces the formal description of the framed Turaev cobracket $\delta^f$, 
or equivalently, induces an isomorphism of Lie bialgebras 
$$
F\inv\circ\theta_{\exp}: (|\widehat{\K\pi}|, [-,-], \delta^f) \to (\grwt(|\K\pi|), [-,-]_{\grwt}, 
\grwt(\delta^f)),
$$
if and only if $F$ is a solution to the Kashiwara--Vergne 
problem for the framed surface. 
\end{thm}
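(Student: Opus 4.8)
The plan is to reproduce, in the positive-genus setting, the three-step argument used for Theorem \ref{thm:genus0}. Since $F$ satisfies (KVI), the expansion $\theta := F\inv\circ\theta_{\exp}$ is special, so by Theorem \ref{thm:Gform} it already intertwines the Goldman brackets; the only thing left to prove is that it intertwines the cobrackets, that is, that the cobracket $\delta^f_\theta$ transported to $|A|$ by $\theta$ equals $\grwt(\delta^f)$ exactly when $F$ is a Kashiwara-Vergne solution for $(\Sigma_{g,n+1},f)$. So the content is entirely about the $\mathbf{1}$-anomaly and the central part of the comparison map.

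First I would establish the positive-genus analogue of Theorem \ref{thm:exp}: a formula for the cobracket $\delta^f_{\exp}$ transported by the exponential expansion in terms of the double divergence $\mathrm{tDiv}\circ\hat\sigma_{\exp}$, up to an explicit anomaly. In contrast to genus $0$, this anomaly is nonzero: the handle generators $\alpha_i,\beta_i$ contribute the term $\mathbf{r}=\sum_i|r(x_i)+r(y_i)|$ governed by $r(s)=\log((e^s-1)/s)$ (this is forced by the non-specialness of $\theta_{\exp}$ on the handles), while the framing enters through $|p|$ and the cocycle $C_q$. The target identity has the shape $\delta^f_{\exp}=\mathrm{tDiv}\circ\hat\sigma_{\exp}+(\text{anomaly})$, the anomaly being an explicit coboundary built from $\mathbf{r}$, $|p|$ and $C_q$ composed with $\hat\sigma_{\exp}$; pinning it down precisely is the technical heart. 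As in \cite{highergenus} and \cite{KKN}, this would be proved by computing $\delta^f_{\exp}$ from the homotopy intersection form of Papakyriakopoulos and Turaev \cite{Papa,Tu78,MT13} within van den Bergh's non-commutative calculus \cite{vdB}, using a double version of Turaev's gate derivatives \cite{Tu19b}.

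Next I would conjugate everything by $F$. The conjugate $F^*\delta^f_{\exp}$ equals $\delta^f_\theta$; since $\theta$ is special one has $F^*\hat\sigma_{\exp}=\grwt(\hat\sigma)$; and the modified double divergence is a group $1$-cocycle with $F^*\mathrm{tDiv}=\mathrm{tDiv}+\partial J(F\inv)$ where $J=\tilde\Delta\circ j$ and the $q$-correction is packaged into $j_q=j-C_q$. Combining these with the way $F\inv$ acts on $\mathbf{r}+|p|$ yields the master formula $\delta^f_\theta=\grwt(\delta^f)+\bigl(\partial\tilde\Delta\,w\bigr)\circ\grwt(\hat\sigma)$ with $w:=j_q(F\inv)+F\inv(\mathbf{r}+|p|)$. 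Hence if $w$ lies in $Z(\grwt(|\K\pi|),[-,-]_{\grwt})$ the correction term vanishes and $\theta$ realizes the formality of $\delta^f$; by Theorem \ref{thm:center} this centrality is exactly (KVII'), equivalent to (KVII). For the converse, assuming $\delta^f_\theta=\grwt(\delta^f)$, I would read off the $\mathbf{1}\otimes|A|$-component of the master formula to deduce that $\grwt(\sigma)(w)$ brackets trivially with every element of positive degree, and then invoke Lemma \ref{lem:inn}, using part (2) for the symplectic piece $\sum_i[\bar x_i,\bar y_i]$ of $\omega$ and part (1) for the $z_j$'s, together with the explicit center in Theorem \ref{thm:center}, to conclude that $\grwt(\sigma)(w)$ is inner and hence $w$ is central, i.e. that (KVII) holds.

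The hard part will be the positive-genus version of Theorem \ref{thm:exp}: isolating the anomaly in $\delta^f_{\exp}$ and checking that, after conjugation by $F$, it reassembles exactly into the combination of $\mathbf{r}$, $|p|$ and $C_q$ appearing in Definition \ref{dfn:KV+}. A secondary difficulty, absent in genus $0$, is that the center of the Goldman Lie algebra is no longer spanned by a single $\omega$-like element but by the whole family of Theorem \ref{thm:center}, so the converse direction requires combining both parts of Lemma \ref{lem:inn} rather than just part (1).
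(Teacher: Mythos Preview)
Your proposal is correct and follows essentially the same route as the paper. The paper packages your ``anomaly'' into the modified divergence $\gdiv^f := \mathrm{tDiv}+ \partial(\tilde\Delta\mathbf{r} + |p|\wedge \mathbf{1}) - c_q\wedge \mathbf{1}$ and states the analogue of Theorem~\ref{thm:exp} as $\delta^f_{\exp} = \gdiv^f\circ\hat\sigma_{\exp}$; after conjugation by $F$ one arrives, exactly as you outline, at the criterion that $w = j_q(F\inv)+ F\inv(\mathbf{r}+|p|)$ be central, with the converse handled via Lemma~\ref{lem:inn}. One small inaccuracy: the anomaly is not entirely a coboundary, since the $-c_q\wedge\mathbf{1}$ term is a genuine cocycle contribution rather than a $\partial$ of something; but you correctly absorb this into $j_q = j - C_q$ in the next step, so the argument goes through.
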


An analog of Theorem \ref{thm:exp} is the key to the proof 
of the theorem. We define $\gdiv^f: \tDer \to |A|\otimes |A|$ 
by $\gdiv^f := \mathrm{tDiv}+ \partial(\tilde\Delta\mathbf{r}
+ |p|\wedge \mathbf{1}) - c_q\wedge \mathbf{1}$. Here $\partial$ means 
the Lie algebra coboundary operator. 
\begin{thm}[{\cite[Corollary 4.8]{highergenus}}]
$$
\delta^f_{\exp} = \gdiv^f\circ\hat\sigma_{\exp}: 
|A| \to |A|\otimes |A|.
$$
\end{thm}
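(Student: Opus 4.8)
The plan is to prove this positive-genus identity by the same route as its genus-$0$ predecessor, Theorem~\ref{thm:exp}: evaluate both sides on a generic immersed loop and match, the matching being effected through the homotopy intersection form of Turaev and Papakyriakopoulos in van den Bergh's non-commutative geometry (as reformulated in \cite{MT13, MT14}), or, more transparently, through a double version of Turaev's gate derivatives (following \cite{KKN, Tu19b}). Since $\theta_{\exp}$ is an isomorphism of complete Hopf algebras and both $\delta^f_{\exp}$ and $\gdiv^f\circ\hat\sigma_{\exp}$ are continuous, it suffices to check the equality on the classes $|\theta_{\exp}(\alpha)|$, $\alpha$ a generic immersed loop on $S$, which span a dense subspace of $|A|$. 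For such $\alpha$, $\delta^f_{\exp}$ sends $|\theta_{\exp}(\alpha)|$ to $\theta_{\exp}^{\otimes 2}\bigl(\delta\alpha + (\rot_f\alpha)(\mathbf 1\otimes\alpha - \alpha\otimes\mathbf 1)\bigr)$, while $\hat\sigma_{\exp}(|\theta_{\exp}(\alpha)|)$ is the $\theta_{\exp}$-conjugate of the geometric datum $(\sigma(\alpha), \sigma_1(\alpha), \dots, \sigma_n(\alpha)) \in \tDer$. So the identity splits into a matching of the self-intersection terms and a matching of the rotation-number term.

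For the self-intersection terms I would invoke the explicit tensorial formula, under the exponential expansion, for the homotopy intersection form --- equivalently, for $\sigma$ and for the cyclic derivatives $\partial/\partial x_i$, $\partial/\partial y_i$, $\partial/\partial z_j$ of its components, which are precisely what $\mathrm{tDiv}$ extracts. For each $(t_1,t_2) \in D_\alpha$, a local model of the two branches of $\alpha$ at the corresponding double point --- the blow-up picture behind Lemma~\ref{lem:alphaalpha} and \eqref{eq:alphahat}, fed into the double gate-derivative computation --- shows that the induced contribution to $\gdiv^f(\hat\sigma_{\exp}(\alpha))$ is exactly the $\theta_{\exp}$-transport of $\varepsilon(\overset\cdot\alpha(t_1),\overset\cdot\alpha(t_2))\,(\alpha\vert_{[t_1,t_2]})\otimes(\alpha\vert_{[t_2,t_1]})$, so that summing over $D_\alpha$ reproduces $\theta_{\exp}^{\otimes 2}(\delta\alpha)$. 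This step is the heart of the argument and, I expect, the main obstacle: setting the homotopy intersection form up precisely enough to get closed formulas, and controlling the generators $\alpha_i,\beta_i$ --- which behave quite differently from the boundary-adjacent $\gamma_j$'s, and whose different behaviour is exactly what forces the function $r(s) = \log((e^s-1)/s)$ into the answer --- requires a long and delicate bookkeeping of signs and of non-commutativity. This is the ``straightforward and long'' computation alluded to in the text.

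For the rotation-number term I would first treat the adapted framing $f^{\mathrm{adp}}$, for which $\rot(\alpha_i) = \rot(\beta_i) = 0$ and $\rot(\partial_j S) = -1$: here the contribution of $\mathrm{tDiv}\circ\hat\sigma_{\exp}$ coming from pairs without double points, together with the coboundary $\partial(\tilde\Delta\mathbf r)$, reproduces $(\rot_{f^{\mathrm{adp}}}\alpha)(\mathbf 1\otimes\alpha - \alpha\otimes\mathbf 1)$ --- the factor $e^{r(s)} = (e^s-1)/s$ entering because $\theta_{\exp}(\alpha_i) = e^{\bar x_i}$, while the divergence contribution of a conjugation-type automorphism carries such a factor. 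Passing to a general framing $f$, one has $\rot_f - \rot_{f^{\mathrm{adp}}} = \langle\chi,[\,\cdot\,]\rangle$ with $\chi$ decomposed into $q = c_f\vert_{H^{(2)}}$ and $\langle p,\cdot\rangle = \chi - c_f$, and the two remaining summands $\partial(|p|\wedge\mathbf 1)$ and $-\,c_q\wedge\mathbf 1$ of $\gdiv^f$ account, respectively, for the $H/H^{(2)}$- and the $H^{(2)}$-parts of this change; the Poincar\'e-Hopf relation $c_f(z_0) = 1 - 2g - \rot_f(\partial_0 S)$ secures consistency at the $0$-th boundary. Adding the two matched pieces gives $\delta^f_{\exp} = \gdiv^f\circ\hat\sigma_{\exp}$. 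As indicated, essentially all of the difficulty is concentrated in the self-intersection computation with the symplectic generators, and the point of the double gate-derivative approach of \cite{KKN, Tu19b} is precisely to render that bookkeeping tractable.
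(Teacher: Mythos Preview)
The paper does not actually prove this theorem in the survey; it is cited as \cite[Corollary 4.8]{highergenus} and introduced only as ``an analog of Theorem \ref{thm:exp}''. For that genus-$0$ predecessor the paper likewise gives no proof, describing the argument as ``straightforward and long'' and involving the homotopy intersection form in van den Bergh's non-commutative framework \cite{MT14}, with an alternative route via a double version of Turaev's gate derivatives \cite{KKN, Tu19b}. Your outline follows precisely this described approach: reduction to generic immersed loops, matching self-intersection contributions through the double divergence of $\hat\sigma_{\exp}$, and then accounting for the rotation-number term first in the adapted framing (where $r(s)$ appears) and then in general via the corrections $\partial(|p|\wedge\mathbf 1)$ and $-c_q\wedge\mathbf 1$. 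You correctly isolate the symplectic generators $\alpha_i,\beta_i$ as the source of the extra difficulty over genus $0$ and of the function $r(s)$. So your proposal is faithful to what the paper indicates, and there is nothing further to compare: neither you nor the survey carries out the long computation, which lives in \cite{highergenus}.
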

In a similar way to the genus $0$ case,
we can prove that $F^*\delta^f_{\exp}= \grwt(\delta^f)$ 
if and only if $j_q(F\inv)+ F\inv(\mathbf{r}+|p|) \in |A|$ is 
in the center for the bracket $[-,-]_{\grwt}$. 
This proves Theorem \ref{thm:genus+}.



%

\end{document}